\documentclass[pdflatex,sn-mathphys-num,draft]{sn-jnl}% Math and Physical Sciences Numbered Reference Style
\usepackage{graphicx}%
\usepackage{multirow}%
\usepackage{amsmath,amssymb,amsfonts}%
\usepackage{amsthm}%
\usepackage{mathrsfs}%
\usepackage[title]{appendix}%
\usepackage{xcolor}%
\usepackage{textcomp}%
\usepackage{manyfoot}%
\usepackage{booktabs}%
\usepackage{algorithmicx}%
\usepackage{algpseudocode}%
\usepackage{listings}%
\theoremstyle{thmstyleone}%
\newtheorem{theorem}{Theorem}%  meant for continuous numbers
\theoremstyle{thmstyletwo}%
\newtheorem{example}{Example}%
\newtheorem{remark}{Remark}%

\theoremstyle{thmstylethree}%
\newtheorem{definition}{Definition}%

\usepackage{lscape}
\usepackage{multirow}
\usepackage{arydshln}
\usepackage{url,floatflt}
\usepackage{subfig}
\usepackage{rotating}
\usepackage{pifont}
 \usepackage{mathdots}
\usepackage{todonotes}

\usepackage[shortlabels]{enumitem}
\usepackage[table]{xcolor}
\usepackage{tikz}
\usetikzlibrary{shapes,snakes}
\usetikzlibrary{patterns}
\usepackage[ruled,linesnumbered]{algorithm2e}
\usepackage{pdflscape}
\usepackage{mathtools}
 \usepackage{amsmath} 
 
\newcommand{\antidiagonaltranspose}{{\scriptscriptstyle\mathrlap{\rotatebox[origin=c]{-45}{$\hspace{-0.6pt} \scriptscriptstyle\leftrightarrow$}}\diagup}}

\newcommand{\binomseq}[2]{\left\{\!\binom{#1}{#2}\!\right\}}
\newcommand{\textbinomseq}[2]{{\textstyle\big\{\!\binom{#1}{#2}\!\big\}}}

\newcommand{\Fset}{\mathbb{F}}

\DeclareMathOperator{\id}{id}
\DeclareMathOperator{\lcm}{lcm}

\usepackage[normalem]{ulem}

\newtheorem{lemma}[theorem]{Lemma}
\newtheorem{corollary}[theorem]{Corollary}

\begin{document}

\title[Binomial sequences over prime fields]{Binomial sequences over prime fields}

%%=============================================================%%
%% GivenName	-> \fnm{Joergen W.}
%% Particle	-> \spfx{van der} -> surname prefix
%% FamilyName	-> \sur{Ploeg}
%% Suffix	-> \sfx{IV}
%% \author*[1,2]{\fnm{Joergen W.} \spfx{van der} \sur{Ploeg} 
%%  \sfx{IV}}\email{iauthor@gmail.com}
%%=============================================================%%

\author[1]{\fnm{Miguel} \sur{Beltrá}}\email{miguel.beltra@ua.es}
\equalcont{All authors contributed equally to this work.}

\author*[2]{\fnm{Sara} \sur{D. Cardell}}\email{sd.cardell@unesp.br}
\equalcont{All  authors contributed equally to this work.}

\author[1]{\fnm{Verónica} \sur{ Requena}}\email{vrequena@ua.es}
\equalcont{All authors contributed equally to this work.}

\affil*[1]{\orgdiv{Department de Matemátiques}, \orgname{Universitat d'Alacant}, \orgaddress{\street{Carretera San Vicente del Raspeig s/n}, \city{Alicante}, \postcode{03690}, \country{Spain}}}

\affil[2]{\orgdiv{Departmento de Matemática, Instituto de Geociências e Ciências Exatas - IGCE}, \orgname{Universidade Estadual Paulista}, \orgaddress{\street{Av. 24A}, \city{Rio Claro-SP}, \postcode{13506-900}, \country{Brazil}}}

%%==================================%%
%% Sample for unstructured abstract %%
%%==================================%%

\abstract{The binary binomial sequences  correspond to the diagonals of  the Pascal's triangle modulo 2.
  They have interesting properties such as they form a basis of the linear  space of  all binary sequences with period a power of $2$.
  Other properties of these sequences (period, linear complexity, construction rules or relations among different binomial sequences)  have been deeply  analysed in detail previously. 
  In this work, we study the binomial $p$-ary sequences for a prime $p$, its intrinsic characteristic and formation rules. 
  We also prove that the family of $p$-ary sequences with period a power of $p$  form a vector  space over $\mathbb{F}_p$ and that the  family of binomial $p$-ary sequences is a basis of this space.
  }

\keywords{Binomial sequences, prime fields, Kronecker product, Hadamard product}

%%\pacs[JEL Classification]{D8, H51}

%%\pacs[MSC Classification]{35A01, 65L10, 65L12, 65L20, 65L70}

\maketitle

 \section{Introduction}\label{sec:intro}

Sequences generated by linear recursions are widely used in practice due to their periodic properties. The characteristic polynomials that produce sequences with maximal periods \cite{Golomb1982bk,Lidl1986bk} over finite fields have been extensively  analysed in various contexts (see, for instance, \cite{Goltvanitsa2012,Tsaban2002}). More recently, recurrent sequences over rings and finite fields have been explored \cite{Bush2021}, with a focus on decomposing the set of sequences into periodic and null components. Additionally, this work   identified the complete set of possible least periods associated with polynomials of a given degree over a finite field.

Expanding on these investigations, Ganesan \cite{GANESAN2021}  analyses the periodicity structure of linear recurring sequences over finite fields in cases where the period is not necessarily maximal. Specifically, he establishes necessary and sufficient conditions for the characteristic polynomial to allow exactly two possible periods, one being 1 and the other a unique integer greater than 1, across all sequences it generates.

There are several studies on the use of non-binary fields in technology applications. For example, \cite{Bos2024PhDT} explores the ternary case as a more efficient alternative to binary, focusing on circuit design, memory implementation, and logic synthesis tools such as MRCS and the REBEL-2 processor. Historically, practical implementations of ternary computing, such as the Setun and Setun 70 computers developed at Moscow State University \cite{Brousentsov}, demonstrated advantages in efficiency, simplicity, and arithmetic operations. Despite these benefits, binary computing remained dominant due to industrial and economic factors.

In this work, we focus on the study of binomial sequences.
These sequences are characterized as the diagonals of Pascal's triangle. When considered over the binary field, these sequences have been extensively studied, both in terms of their intrinsic properties, such as period and complexity, and their recursive formation rules \cite{Cardell2019a}. Moreover, it has been shown that certain cryptographic (binary) sequences can be expressed as a linear combination of these sequences \cite{Cardell2020c}.
In this work we  analyse the binomial sequences over prime fields. In particular, we study their linear complexity, period, characteristic polynomial and construction rules.
We also prove that the $p$-ary binomial sequences form a basis of the space of $p$-ary sequences with period a power of $p$ over $\mathbb{F}_p$. 
It is worth noting that this type of study is novel, and no related literature has been found regarding the study of these sequences over finite fields
 
The paper is organized as follows: In Section 2,
we introduce some basic concepts about $p$-ary sequences. 
Section 3 recalls the basic properties of the binary binomial sequences. 
In Section 4,  we study the properties of the binominal sequences over prime fields. 
Finally, the paper concludes with some  final remarks and future work.

%%%%%%%%%%%%%%%%%%%%%%%%%%%%%%%%%%%%%%%%%%%%%%%
\section{Preliminaries}\label{sec:prel}
In this section, we present some basic concepts needed to understand the rest of the paper.

%%%%%%%%%%%%%%%%%%%%%%%%%%%%%%%%%%%%%%%%%%
\subsection{Sequences}
%\todosara{Si no vamos a colocar más subsecciones podríamos quitar este título}
Let $\mathbb{F}_p$ be the Galois field of $p$ elements.
We say $\{s_n\}_{n\geq 0}  = \{s_0, s_1, s_2, \ldots\}$ is a \textbf{$p$-ary sequence} if its terms $s_n\in \mathbb{F}_p$, for $n=0, 1, 2,\ldots$. 
In the sequel, all the sequences considered will be $p$-ary sequences and will be denoted by $\{s_n\}$ instead of $\{s_n\}_{n\geq 0} $. 
%We write $\{ s_n \}_{n \geq 0}^N$ to denote the first $N$ terms $\{ s_0,s_1,\ldots,s_N \}$.

We denote by $V(\mathbb{F}_q)$ the set of all $p$-ary sequences. Together with the operations given by $\{s_n\}  + \{ \sigma_n \}  = \{ s_n+\sigma_n \} $ and $c \{ s_n \}  = \{ c s_n \} $, with $c \in \mathbb{F}_p$, the set $V(\mathbb{F}_p)$ forms a linear space over $\mathbb{F}_p$, with the zero sequence $\{ 000 \ldots \}$ as the neutral element for the sum \cite{GolombGong2005}. 

A sequence $\{s_n\} $ is said to be \textbf{periodic} if and only if there exists an integer $N$ such that $s_{n+N}=s_n$, for all $n\geq 0$. The minimum positive integer $T$ such that $s_{n+T} = s_n$, for all $n \geq 0$, is called the \textbf{period} of the sequence. 
From now on, we will work with sequences $\{s_n\}$ as the first $T$ terms $\{ s_0,s_1,\ldots,s_T \}$.
%Notice that if $\{s_n\}$ is of period $T$ and $s_{n+N} = s_{n}$, for all $n \geq 0$, then $T \mid N$, otherwise there exist integers $Q$ and $R$ with $R < T$ such that $N = QT+R$ and $s_n = s_{n+N} = s_{n + (QT+R)} = s_{n+R}$ for all $n \geq 0$, contradicting the fact that $T$ is the period of $\{ s_n \}$}. 
%and the sum modulo $p$ operation among sequences will be denoted by $+$.

Let $r$ be a positive integer and let $d_1, d_2, d_3, \ldots, d_{r}$ be constant coefficients with $d_i\in\mathbb{F}_p$.
A $p$-ary sequence $\{s_n\} $ satisfying the relation
\begin{equation}\label{eq:1}	
s_{n+r}=d_rs_n + d_{r-1}s_{n+1}+ \cdots +d_{3}s_{n+r-3}+d_{2}s_{n+r-2}+d_{1}s_{n+r-1}, \quad n\geq 0,
\end{equation}
is called a ($r$-\textbf{th order}) \textbf{linear recurring sequence} in $\mathbb{F}_p$.
The terms $\{s_0, s_1, \ldots,s_{r-1}\}$ are referred to as the \textbf{initial values} (or \textbf{initial state}) and determine the rest of the sequence uniquely.
A relation of the form given by the equation~(\ref{eq:1}) is called a ($r$-\textbf{th order}) \textbf{linear recurrence relationship}.

We define the \textbf{shifting operator} $\textbf{E}: V(\mathbb{F}_p) \longrightarrow V(\mathbb{F}_p)$ as the map
\[ \textbf{E}\{ s_0, s_1, s_2, \ldots \} = \{ s_1, s_2, s_3, \ldots \}. \]
This is a linear operator over $V(\mathbb{F}_p)$. Let us denote by $E$ the action of $\textbf{E}$ on each term of the sequence, that is, $\textbf{E}\{ s_0, s_1, s_2, \ldots \} = \{ Es_0, Es_1, Es_2, \ldots \}$. Hence, $E s_n = s_{n+1}$ for all $n \geq 0$. We can extend this notation: for all integer $t \geq 0$, we denote $E^t s_n = s_{n+t}$.
Thus, we can rewrite \eqref{eq:1} as
\[
E^r s_n = d_r s_n + d_{r-1} E s_n + \cdots + d_{3} E^{r-3} s_n + d_2 E^{r-2} s_n + d_1 E^{r-1} s_n, \quad n\geq 0.
\]
Hence, if we consider the monic polynomial 
$$ p(x) = x^{r}- \sum_{i=1}^{r}d_ix^{r-i} \in \mathbb{F}_p[x], $$
% \todosara{no seria $d_rx_r$?}
% \todovero{No, pues hace referencia a $E^r s_n$ que sería mónico.}
it happens that
\begin{equation}\label{eq:polynomial_shifting_operator}
    p(E)s_n = 0, \quad n \geq 0.
\end{equation}
A polynomial $p(x) \in \mathbb{F}_p[x]$, not necessarily monic, that satisfies \eqref{eq:polynomial_shifting_operator} is called a \textbf{generator polynomial} of a linear recurring sequence $\{ s_n \} $ and $\{s_n\} $ is said to be \textbf{generated} by $p(x)$.
The constant polynomial $1$ is the characteristic polynomial of the zero sequence $\{0, 0, 0, \ldots\}$, the polynomial $x-1$ is the characteristic polynomial of any constant sequence $\{c, c, c, \ldots \}$ with $c \in \mathbb{F}_{p}\setminus \{0\}$
and the $0$ polynomial generates all the sequences. Given a polynomial, the set of all sequences generated by this  polynomial is a linear subspace  of the linear space of $p$-ary sequences. 
Given a sequence $\{ s_n \} $ there are several polynomials generating it. In fact, the set $I$ of all generator polynomials of a sequence is an ideal of $\mathbb{F}_p[x]$  \cite[Theorem 4.4]{GolombGong2005}. 
Recall that $\mathbb{F}_p[x]$ is a principal ideal domain and hence either $I = \langle 0 \rangle$ or there is a unique monic polynomial $p(x)$ such that $I = \langle p(x) \rangle$ with $\deg(p(x)) < \deg(q(x))$ for any other monic polynomial $q(x) \in I$ \cite[Ch. 1, \S 5, Corollary 4]{CoxLittleOShea2015}. 
The unique nonzero monic generator polynomial of lowest degree of a sequence $\{ s_n \} $ is called its \textbf{characteristic polynomial}.

% The generation of linear recurring sequences can be implemented on LFSRs \cite{Golomb1982bk}.
% These structures handle information in the form of binary elements and they are based on shifts and linear feedback.
% In fact, an LFSR is an electronic device with $r$ memory cells (stages) with binary contents. At each time instant, each element is shifted to the adjacent stage and a new element is computed via a linear feedback to fill the empty stage (see Figure~\ref{fig:LFSR}).
% % If the characteristic polynomial of the linear recurring sequence is primitive \cite{Golomb1982bk}, then the LFSR is a maximal-length LFSR and its output sequence, the so-called PN-sequence, has period $T = 2^{r}-1$.
% % %Such an output sequence is called PN-sequence (pseudonoise sequence).

 The \textbf{linear complexity}, $LC$, of a sequence $\{s_n\} $ is defined as the order of the lowest order linear recurrence relationship that generates such a sequence (the degree of the characteristic polynomial).

\subsection{Matrices}
In this section we are going to recall some special matrices.
\begin{definition}
Given a square matrix $A \in \mathrm{Mat}_{n}(\mathbb{F}_p)$, we define its \textbf{transpose with respect to its antidiagonal}, denoted by $A^\antidiagonaltranspose$, 
as the matrix:
\[
A^\antidiagonaltranspose=
\left[ 
    \begin{array}{ccccc}
        a_{11} & a_{12} & \cdots & a_{1\, n-1} & a_{1n} \\
        a_{21} & a_{22} & \cdots & a_{2,n-1} & a_{2n} \\
        \vdots & \vdots &  & \vdots &\vdots \\
        a_{n-1\, 1} & a_{n-1\, 2} & \cdots & a_{n-1\, n-1} & a_{n-1\, n} \\
        a_{n1} & a_{n2} & \cdots & a_{n\, n-1} & a_{nn}
    \end{array}
\right]^\antidiagonaltranspose
=
\left[ 
    \begin{array}{ccccc}
        a_{nn} & a_{n-1\, n} & \cdots & a_{2n} & a_{1n} \\
        a_{n\, n-1} & a_{n-1\, n-1} & \cdots & a_{2\, n-1} & a_{1\, n-1} \\
        \vdots & \vdots &  & \vdots&\vdots \\
        a_{n2} & a_{n-1\, 2} & \cdots & a_{22} & a_{12} \\
        a_{n1} & a_{n\, n-1} & \cdots & a_{21} & a_{11}
    \end{array}
\right].
\]    
\end{definition}

If we consider $J$ to be the square matrix 
with ones in the antidiagonal and zeros in the rest of the entries, i.e.,
\[
J = 
\left[
    \begin{array}{cccc}
        0 & \cdots & 0 & 1 \\
        0 & \cdots & 1 & 0 \\
        \vdots & & \vdots & \vdots \\
        1 & \cdots & 0 & 0
    \end{array}
\right],
\]
then it is easy to show that $A^\antidiagonaltranspose = J A^t J$. The matrix $J$ is usually known as \textbf{exchange matrix} or \textbf{reversal matrix} \cite{Horn2012bk}. Notice that $J^{-1} = J$.

\begin{definition}
Given two matrices $A\in   \mathrm{Mat}_{n\times m}(\mathbb{F}_p)$ and $B \in \mathrm{Mat}_{r\times s}(\mathbb{F}_p)$, its \textbf{Kronecker product} is the $nr \times ms$ matrix defined as
\[
A \otimes B =
\left[ 
    \begin{array}{cccc}
        a_{11} B & a_{12} B & \cdots & a_{1m} B \\
        a_{21} B & a_{22} B & \cdots & a_{2m} B \\
        \vdots & \vdots &   & \vdots \\
        a_{n1} B & a_{n2} B & \cdots & a_{nm} B
    \end{array}
\right].
\]
\end{definition}
It is easy to show the following properties of the Kronecker product: let $A  \in \mathrm{Mat}_{n}(\mathbb{F}_p)$ and $B  \in \mathrm{Mat}_{m}(\mathbb{F}_p)$.  Then, 
\begin{itemize} 
    \item $(A \otimes B)^t = A^t \otimes B^t$ .
    \item $(AB) \otimes (CD) = (A \otimes C)(B \otimes D)$.
    \item If $A$ and $B$ are nonsingular, then so is $A \otimes B$ and $(A \otimes B)^{-1} = A^{-1} \otimes B^{-1}$.
\end{itemize}
For a proof of these facts see \cite[Sec. 4.2]{HornJohnson1991}.

\begin{lemma} Let $A  \in \mathrm{Mat}_{n}(\mathbb{F}_p)$ and $B \in \mathrm{Mat}_{m}(\mathbb{F}_p)$. Then $(A \otimes B)^\antidiagonaltranspose 
=
A^\antidiagonaltranspose \otimes B^\antidiagonaltranspose.
$
\end{lemma}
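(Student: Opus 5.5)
The plan is to reduce everything to the identity $A^\antidiagonaltranspose = J A^t J$ stated just before the lemma, together with the listed Kronecker product properties. Write $J_k$ for the $k\times k$ exchange matrix. Then
\[
(A\otimes B)^\antidiagonaltranspose = J_{nm}\,(A\otimes B)^t\,J_{nm} = J_{nm}\,(A^t\otimes B^t)\,J_{nm},
\]
using $(A\otimes B)^t = A^t\otimes B^t$. The right-hand side of the lemma, $A^\antidiagonaltranspose\otimes B^\antidiagonaltranspose = (J_nA^tJ_n)\otimes(J_mB^tJ_m)$, can be expanded with the mixed product rule $(AB)\otimes(CD)=(A\otimes C)(B\otimes D)$, applied twice, into $(J_n\otimes J_m)(A^t\otimes B^t)(J_n\otimes J_m)$.

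So the whole proof reduces to the single identity $J_n\otimes J_m = J_{nm}$. This is the only step needing an actual check, and I would verify it entrywise. Index the rows and columns of an $nm\times nm$ Kronecker product by pairs $(i,k)$ with $1\le i\le n$ and $1\le k\le m$, corresponding to the position $(i-1)m+k$. The entry of $J_n\otimes J_m$ at row $(i,k)$, column $(j,l)$ is $(J_n)_{ij}(J_m)_{kl}$. This is $1$ exactly when $j=n+1-i$ and $l=m+1-k$, and $0$ otherwise.

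Under the linear indexing, the column position is
\[
(n-i)m+(m+1-k) = nm+1-\bigl((i-1)m+k\bigr),
\]
which is precisely the antidiagonal partner of the row position $(i-1)m+k$. Hence $J_n\otimes J_m=J_{nm}$, and substituting this into the expansion above finishes the proof.

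I expect no real obstacle. The only care needed is the index bookkeeping in the last step. As an alternative, the lemma could be checked directly on block entries: the antidiagonal transpose reverses the block order and transposes each block across its antidiagonal. That route is messier, so I would use the $J$-based argument.
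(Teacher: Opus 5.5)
Your proposal is correct and follows the paper's proof essentially verbatim. Like the paper, you write the antidiagonal transpose as $JA^tJ$, apply $(A\otimes B)^t=A^t\otimes B^t$ and the mixed-product rule, and identify $J_n\otimes J_m$ with the $nm\times nm$ exchange matrix; the only difference is that you check $J_n\otimes J_m=J_{nm}$ by an explicit index computation, which the paper simply asserts as clear.
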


\begin{proof} Let $J_A$ and $J_B$ be the exchange matrices of sizes $n \times n$ and $m \times m$ respectively. Then it is clear that the matrix $J$ defined as $J = J_A \otimes J_B$ is the exchange matrix of size $nm \times nm$. Hence, by the properties of the Kronecker product, we have
\[ 
(A \otimes B)^\antidiagonaltranspose
=
J (A \otimes B)^t J
=
(J_A \otimes J_B) (A^t \otimes B^t) (J_A \otimes J_B)
=
(J_A A^t J_A) \otimes (J_B B^t J_B)
=
A^\antidiagonaltranspose \otimes B^\antidiagonaltranspose.
\]

\end{proof}

\begin{definition}
The \textbf{matrix exponential} $A$ is the matrix defined as
\[ e^A = \sum_{i=0}^\infty \frac{1}{i!} A^i. \]
\end{definition}

It holds $(e^A)^{-1} = e^{-A}$. For a proof of this fact, see \cite[6.2.38]{HornJohnson1991}. The next lemma shows that the matrix exponential behaves well with respect to the transposition with respect to the antidiagonal. This was already proved in \cite{AbramsFishkingValdesLeon2000}.

\begin{lemma} $e^{(A^\antidiagonaltranspose)} = \left( e^A \right)^\antidiagonaltranspose$.    
\end{lemma}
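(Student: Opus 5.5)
The plan is to use the identity $A^\antidiagonaltranspose = J A^t J$, with $J^{-1}=J$, and to push the antidiagonal transpose through the power series term by term.

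The first step is to check that the map $A \mapsto A^\antidiagonaltranspose$ reverses products. Indeed,
\[ (AB)^\antidiagonaltranspose = J B^t A^t J = (J B^t J)(J A^t J) = B^\antidiagonaltranspose A^\antidiagonaltranspose, \]
because $J^2 = I$. Taking $A=B$ repeatedly and using induction on $i$ gives $(A^i)^\antidiagonaltranspose = (A^\antidiagonaltranspose)^i$ for every $i \geq 0$. Here the order reversal does no harm, since all factors are equal. The case $i=0$ is $I^\antidiagonaltranspose = J I J = I$.

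The second step uses that the map is linear, since $(\lambda A + \mu B)^\antidiagonaltranspose = \lambda A^\antidiagonaltranspose + \mu B^\antidiagonaltranspose$. Applying this to the partial sums of the series gives
\[ \Big(\sum_{i=0}^{N} \tfrac{1}{i!} A^i\Big)^\antidiagonaltranspose = \sum_{i=0}^{N} \tfrac{1}{i!} (A^\antidiagonaltranspose)^i . \]

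The last step is to pass to the limit $N\to\infty$. The map $X \mapsto JX^tJ$ is a linear map on a finite-dimensional space, and it only permutes the entries of $X$. So it commutes with entrywise limits, and the identity for partial sums carries over to the full series, giving $e^{(A^\antidiagonaltranspose)} = (e^A)^\antidiagonaltranspose$.

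The main obstacle is giving meaning to the series over $\mathbb{F}_p$, because $1/i!$ is undefined once $i \geq p$. I would handle this in one of two ways. The first is to work over a field of characteristic zero, where the argument above is exact and needs no extra care. The second is to restrict, as the paper presumably intends, to nilpotent $A$ with $A^p=0$, so that the sum is finite. In that case $A^\antidiagonaltranspose$ is nilpotent of the same index, because $(A^\antidiagonaltranspose)^i = (A^i)^\antidiagonaltranspose$. Both series then truncate at the same index $i<p$, and the finite-sum identity from the second step finishes the proof with no limit needed.
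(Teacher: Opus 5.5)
Your proof is correct and follows essentially the same route as the paper: write $A^\antidiagonaltranspose = JA^tJ$ with $J^2=I$, deduce $(A^\antidiagonaltranspose)^i = (A^i)^\antidiagonaltranspose$, and push the map $X\mapsto JX^tJ$ through the exponential series term by term. Your extra care on two points goes beyond the paper, which passes over both silently: the passage to the limit, and the fact that $1/i!$ is undefined in $\mathbb{F}_p$ for $i\ge p$. You handle the latter via the nilpotent case $A^p=0$, which is exactly the situation of the matrix $U$ used later.
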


\begin{proof} Since $J^{-1} = J$, we have
\[  
(A^\antidiagonaltranspose)^i = (JA^tJ)^i = \underbrace{(JA^tJ) \cdots (JA^tJ)}_{\text{$i$ factors}} = J(\underbrace{A^t \cdots A^t}_{\text{$i$ factors}}) J = J (A^t)^i J = J (A^i)^t J = (A^i)^\antidiagonaltranspose. 
\]
Hence, 
\[ e^{(A^\antidiagonaltranspose)} 
= \sum_{i=0}^\infty \frac{1}{i!} (A^\antidiagonaltranspose)^i 
= \sum_{i=0}^\infty \frac{1}{i!} (J(A^i)^t J) 
= J \left( \sum_{i=0}^\infty \frac{1}{i!} A^i \right)^t J
= \left( \sum_{i=0}^\infty \frac{1}{i!} A^i \right)^\antidiagonaltranspose = \left( e^A \right)^\antidiagonaltranspose. \]
\end{proof}

Next, we introduce a matrix that will be important in the following sections.

\begin{definition}
The \textbf{Pascal-Hadamard matrix} of size $p^L \times p^L$ modulo $p$ is the matrix defined as
\begin{equation}\label{eq:Pascal_matrix}
H_L =
\left[ 
    \begin{array}{ccccc}
        \binom{0}{0} & \binom{1}{0} & \binom{2}{0} & \cdots & \binom{p^L-1}{0} \\[0.5em]
        \binom{0}{1} & \binom{1}{1} & \binom{2}{1} & \cdots & \binom{p^L-1}{1} \\[0.5em]
        \binom{0}{2} & \binom{1}{2} & \binom{2}{2} & \cdots & \binom{p^L-1}{2} \\
        \vdots & \vdots & \vdots & \ddots & \vdots \\
        \binom{0}{p^L-1} & \binom{1}{p^L-1} & \binom{2}{p^L-1} & \cdots & \binom{p^L-1}{p^L-1}
    \end{array}
\right].
\end{equation}    
\end{definition}

This matrix has been widely studied in the literature \cite{CallVellman1993,AbramsFishkingValdesLeon2000,Horn2012bk} and has several interesting properties.
It is used in various fields, such as dynamical systems theory, physics, engineering, computer science, and finance, among others, due to its ability to model processes that evolve continuously.

\begin{lemma}\label{lem:inverse_H} The Pascal-Hadamard matrix satisfies the following properties
\begin{enumerate}
    \item[a)]  Given the $p \times p$ matrix 
    \[ 
    U = 
    \left[
        \begin{array}{cccccccc}
            0 & 1 & 0 & \cdots & 0 & 0 & 0 \\
            0 & 0 & 2 & \cdots & 0 & 0 & 0 \\
            \vdots & \vdots & \vdots &  & \vdots & \vdots & \vdots \\
            0 & 0 & 0 & \cdots & p-3 & 0 & 0 \\
            0 & 0 & 0 & \cdots & 0 & p-2 & 0 \\
            0 & 0 & 0 & \cdots & 0 & 0 & p-1 \\
            0 & 0 & 0 & \cdots & 0 & 0 & 0
        \end{array}
    \right],
    \]
    it holds that $H_1 = e^U$.
    
    \item[b)] $H_L = H_1 \otimes H_{L-1}$.
\end{enumerate}
\end{lemma}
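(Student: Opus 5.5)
For part a), the plan is to compute the powers of $U$ explicitly. $U$ is strictly upper triangular with entries $U_{i,i+1} = i$, using $0$-based row and column indices $0,\dots,p-1$. Since it is nilpotent with $U^p = 0$, the exponential series only involves $i \le p-1$. The coefficients $1/i!$ for $0 \le i \le p-1$ are therefore well defined in $\mathbb{F}_p$, because $i! \not\equiv 0 \pmod p$ for $i<p$. So $e^U = \sum_{i=0}^{p-1} \frac{1}{i!}U^i$ makes sense over $\mathbb{F}_p$, and I would remark on this at the start, since the definition of $e^A$ was given formally.

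Next I would show by induction on $k$ that $U^k$ is supported on the $k$-th superdiagonal, with
\[ (U^k)_{i,i+k} = i(i+1)\cdots(i+k-1) = \frac{(i+k-1)!}{(i-1)!} \quad (i\ge 1), \]
and with $(U^k)_{0,k}=0$ for $k\ge 1$. The inductive step is $(U^{k+1})_{i,i+k+1} = (U^k)_{i,i+k}\,U_{i+k,i+k+1}$. Hmm: this gives entries $i\cdots(i+k)$, which does not match $\binom{j}{i}$ with $j=i+k$, whose value is $\frac{(i+k)!}{i!\,k!}$. The entries should instead be $(i+1)\cdots(i+k)$, so I would recheck the indexing. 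With $1$-based indexing the first row of $U$ has entry $1$ in column $2$, so $U_{r,r+1}=r$ for $r=1,\dots,p-1$. Translating to $0$-based indices $i=r-1$ gives $U_{i,i+1}=i+1$. Then $(U^k)_{i,i+k}=(i+1)(i+2)\cdots(i+k)=\frac{(i+k)!}{i!}$, and hence
\[ \Bigl(\tfrac{1}{k!}U^k\Bigr)_{i,i+k} = \frac{(i+k)!}{i!\,k!} = \binom{i+k}{i}. \]
Summing over $k$, $(e^U)_{i,j} = \binom{j}{i}$ for $j \ge i$ and $0$ otherwise, which is exactly $H_1$, since $\binom{j}{i}=0$ for $i>j$. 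One technical point needs care: the identity $\frac{(i+k)!}{i!\,k!}=\binom{i+k}{i}$ must hold in $\mathbb{F}_p$. This is fine because $i+k\le p-1$, so every factorial involved is invertible mod $p$ and the integer identity reduces mod $p$.

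For part b), the key tool is Lucas' theorem. I would index the rows and columns of $H_L$ by $0,\dots,p^L-1$ and write $j = j_1 p^{L-1} + j'$ and $i = i_1 p^{L-1} + i'$ with $0\le j_1,i_1\le p-1$ and $0\le i',j'<p^{L-1}$. By the definition of the Kronecker product, $(H_1\otimes H_{L-1})_{i,j} = (H_1)_{i_1,j_1}(H_{L-1})_{i',j'} = \binom{j_1}{i_1}\binom{j'}{i'}$. Lucas' theorem, applied by splitting off the top base-$p$ digit, gives $\binom{j}{i}\equiv \binom{j_1}{i_1}\binom{j'}{i'} \pmod p$, and this is exactly the $(i,j)$ entry of $H_L$.

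Lucas' theorem is the main input for b), and I would prove the needed one-digit form directly. In $\mathbb{F}_p[x]$ we have $(1+x)^{j}=(1+x^{p^{L-1}})^{j_1}(1+x)^{j'}$, using $(1+x)^{p^{L-1}}=1+x^{p^{L-1}}$ from the Frobenius map. Comparing coefficients of $x^{i}$ gives the result: since $j'<p^{L-1}$, the only way to obtain exponent $i$ is $i_1 p^{L-1}$ from the first factor and $i'$ from the second. The main obstacle overall is this index bookkeeping, namely getting the digit decomposition to align with the block structure of the Kronecker product, rather than any deep difficulty.
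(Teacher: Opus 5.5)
Your proof is correct, and it takes a genuinely different route from the paper. The paper gives no argument of its own: it cites Call--Velleman (Theorem 5) for a) and Kubelka for b). Your version is self-contained.

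In a), once you settle on the correct indexing $U_{i,i+1}=i+1$ for $0\le i\le p-2$, the computation $(U^k)_{i,i+k}=(i+1)\cdots(i+k)$ gives $\frac{1}{k!}(U^k)_{i,i+k}=\binom{i+k}{i}$. This is exactly the $(i,i+k)$ entry of $H_1$. In a final write-up, delete the first, mis-indexed attempt. In particular, the claim $(U^k)_{0,k}=0$ is false: in fact $(U^k)_{0,k}=k!$, which produces the first row of ones.

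More substantively, you handle a point the paper's citation glosses over. The classical identity $e^U=H_1$ lives over $\mathbb{Q}$, but here $e^U$ is taken over $\mathbb{F}_p$, where $1/i!$ is undefined for $i\ge p$. Two of your observations make the statement meaningful and the reduction mod $p$ legitimate: $U^p=0$ truncates the series at $i=p-1$, and all factorials up to $(p-1)!$ are units.

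In b), you use the Frobenius identity $(1+x)^j=(1+x^{p^{L-1}})^{j_1}(1+x)^{j'}$ in $\mathbb{F}_p[x]$ together with uniqueness of quotient and remainder modulo $p^{L-1}$. This proves precisely the one-digit Lucas congruence $\binom{j}{i}\equiv\binom{j_1}{i_1}\binom{j'}{i'}$. Your digit split $i=i_1p^{L-1}+i'$, $j=j_1p^{L-1}+j'$ matches the block indexing of $H_1\otimes H_{L-1}$ exactly. This is essentially the standard proof of Lucas's theorem, which the paper only states later, in Section 4. So your route also removes the dependence on the external reference.

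The citation buys brevity; your approach buys a complete proof that is visibly valid in characteristic $p$.
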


\begin{proof} A proof for a) can be found in \cite[Theorem 5]{CallVellman1993}. Property b) was already observed in \cite{Kubelka2002}.
\end{proof}

\begin{theorem}\label{thm:inversaHL} The inverse of $H_L$ is $H_L^{-1} = H_L^\antidiagonaltranspose$.    
\end{theorem}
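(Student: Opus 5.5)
We argue by induction on $L$, using the lemmas established above. Note that $H_L$ has size $p^L\times p^L$; for $L=0$ it is the $1\times 1$ matrix $[1]$ and the statement is trivial, so the real work is the case $L=1$, after which the Kronecker structure propagates the result.

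For $L=1$, Lemma~\ref{lem:inverse_H}~a) gives $H_1=e^U$, hence $H_1^{-1}=e^{-U}$ by the fact $(e^A)^{-1}=e^{-A}$. By the lemma on the exponential and antidiagonal transposition, $H_1^\antidiagonaltranspose=(e^U)^\antidiagonaltranspose=e^{(U^\antidiagonaltranspose)}$. So it suffices to show $U^\antidiagonaltranspose=-U$ over $\mathbb{F}_p$. The only nonzero entries of $U$ are $u_{i,i+1}=i$ for $i=1,\dots,p-1$. Under antidiagonal transposition, the entry in position $(i,j)$ moves to $(n+1-j,\,n+1-i)$ with $n=p$, so $u_{i,i+1}$ lands in position $(p-i,\,p+1-i)$, still on the superdiagonal. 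Thus $U^\antidiagonaltranspose$ has superdiagonal entry $p-k\equiv -k \pmod p$ in position $(k,k+1)$, i.e. $U^\antidiagonaltranspose=-U$. Consequently $H_1^\antidiagonaltranspose=e^{-U}=H_1^{-1}$.

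For the inductive step, assume $H_{L-1}^{-1}=H_{L-1}^\antidiagonaltranspose$. By Lemma~\ref{lem:inverse_H}~b), $H_L=H_1\otimes H_{L-1}$. Since both factors are nonsingular,
\[
H_L^{-1}=H_1^{-1}\otimes H_{L-1}^{-1}=H_1^\antidiagonaltranspose\otimes H_{L-1}^\antidiagonaltranspose=(H_1\otimes H_{L-1})^\antidiagonaltranspose=H_L^\antidiagonaltranspose,
\]
where the third equality is the lemma on Kronecker products and antidiagonal transposition.

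The main obstacle is the legitimacy of the matrix exponential over $\mathbb{F}_p$, since $1/i!$ is undefined for $i\ge p$. I would address it by noting that $U$ is nilpotent with $U^p=0$ (it is strictly upper triangular of size $p$), so $e^{\pm U}=\sum_{i=0}^{p-1}\frac{(\pm U)^i}{i!}$ is a finite sum involving only invertible factorials. The identities $e^Ue^{-U}=I$ and $(e^A)^\antidiagonaltranspose=e^{(A^\antidiagonaltranspose)}$ are then polynomial identities. For $e^Ue^{-U}=I$, the coefficient of $U^k$ with $k\le p-1$ is $\sum_{i+j=k}\frac{(-1)^j}{i!\,j!}=0$ for $k\ge 1$, and all terms of degree $\ge p$ vanish because $U^p=0$. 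This justifies the cited facts in characteristic $p$. As a sanity check, one could alternatively verify $H_1H_1^\antidiagonaltranspose=I$ directly via the identity $\sum_k(-1)^{k}\binom{k}{i}\binom{j}{k}=(-1)^{j}\delta_{ij}$, using that the entries of $H_1^\antidiagonaltranspose$ are $\binom{p-1-i}{p-1-j}\equiv(-1)^{i+j}\binom{j}{i}\pmod p$.
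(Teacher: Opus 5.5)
Your proof is correct and follows the paper's argument essentially step for step: the base case $L=1$ via $H_1=e^U$, with $U$ negated by antidiagonal transposition, followed by the Kronecker-product induction using $H_L=H_1\otimes H_{L-1}$. Your observation that $U^p=0$ makes $e^{\pm U}$ a finite sum with invertible factorials is a worthwhile addition, since the paper uses the exponential and $(e^A)^{-1}=e^{-A}$ over $\mathbb{F}_p$ without addressing that $1/i!$ is undefined for $i\ge p$.
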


\begin{proof} We proceed by induction over $L$:
\begin{itemize}
\item For $L=0$ it is trivial.

\item Let us prove the case $L=1$. Due to the recursive definition of $H_L$, we will need it to prove the general case. From Lemma~\ref{lem:inverse_H}a), $H_1 = e^U$. Notice that
\[ 
-U =
\left[ 
    \begin{array}{cccccccc}
        0 & p-1 & 0 & \cdots & 0 & 0 & 0 \\
        0 & 0 & p-2 & \cdots & 0 & 0 & 0 \\
        0 & 0 & 0 & \ddots & 0 & 0 & 0 \\
        \vdots & \vdots & \vdots & \ddots & \ddots & \vdots & \vdots \\
        0 & 0 & 0 & \cdots & 0 & 2 & 0 \\
        0 & 0 & 0 & \cdots & 0 & 0 & 1 \\
        0 & 0 & 0 & \cdots & 0 & 0 & 0
    \end{array}
\right]
= 
U^\antidiagonaltranspose,
\]
so we have that $\left( e^U \right)^{-1} = e^{-U}$. From here, we deduce that the inverse of $H_1$ is $H_1^\antidiagonaltranspose$ since
\[ 
H_1^{-1} = \left( e^U \right)^{-1} = e^{-U} = e^{\left( U^\antidiagonaltranspose \right)} = \left( e^U \right)^\antidiagonaltranspose = H_1^\antidiagonaltranspose.
\]

\item Assume that $H_L^{-1} = H_L^\antidiagonaltranspose$ for a certain $L$. Let us see that $H_{L+1}^{-1} = H_{L+1}^\antidiagonaltranspose$. By the properties of the Kronecker product, we have that
\[ H_{L+1}^{-1} = \left( H_1 \otimes H_L \right)^{-1} = H_1^{-1} \otimes H_L^{-1}. \]
By the induction hypothesis, and using that $H_1^{-1} = H_1^\antidiagonaltranspose$ and the properties of the antidiagonal transposition, we have
\[
H_{L+1}^{-1} = H_1^\antidiagonaltranspose \otimes H_L^\antidiagonaltranspose = \left( H_1 \otimes H_L \right)^\antidiagonaltranspose = H_{L+1}^\antidiagonaltranspose.
\]
\end{itemize}
\end{proof}

 \section{$p$-ary Binomial Sequences}

The binomial coefficient $\binom{n}{k}$ is the coefficient of the power $x^k$ in the polynomial expansion of $(1+x)^n$.
For every positive integer $n$, it is a well-known fact that $\binom{n}{0}=1$ and $\binom{n}{k}=0$ for $k>n$.
%When $n$ takes successive values, each binomial coefficient modulo 2 defines a binary sequence with constant period.
Moreover, it is worth noticing that if we arrange these binomial coefficients into rows for successive values of $n = 0, 1, 2, \ldots $, then the generated structure is the Pascal's triangle (see Figure~\ref{fig:pascal}).
The left-most diagonal is the identically 1 sequence, the next diagonal is the sequence of natural numbers $\{1, 2, 3, \ldots\}$,
the next one is the sequence of triangular numbers $\{1, 3, 6, 10, \ldots\}$, etc.
 % (numbers that count the objects that can form an equilateral triangle), etc.
Other fascinating sequences (tetrahedral numbers, pentatope numbers,
hexagonal numbers, Fibonacci sequence, etc.) can be found in the diagonals of this triangle.
Moreover, if we color the odd numbers of the Pascal's triangle and shade the even numbers, then we get the Sierpinski's triangle (see Figure~\ref{fig:sierp}).
This means that the Pascal's triangle modulo 2 generates the Sierpinski's triangle  (see Figure~\ref{fig:sierp2}).

The binomial coefficients reduced modulo $p$ allow us to introduce the concept of $p$-ary binomial sequence.
%In this section, the concept of binomial sequence is introduced as well as a fundamental result that relates binomial sequences with arbitrary binary sequences of period a power of $2$.

\begin{definition}
Given a fixed   integer $k\geq 0$,
the sequence $\{ s_n \}$ given by:
\begin{equation*}\label{def:bin}
s_n =
\begin{cases}
\phantom{(}0\phantom{)} &\text{ if } n<k \\
\binom{n}{k} \bmod p &\text{ if } n \geq k \\
\end{cases}
\end{equation*}
is known as the \textbf{$p$-ary $k$-th binomial sequence}.
\end{definition}
From now on, and if there is no confusion, we will refer to this sequence as the $k$-th binomial sequence and denote it by $\textbinomseq{n}{k}$. If the term $k$ is not determined, we simply call it a \textbf{binomial sequence}.

% \begin{table}[h]
% %\caption{Definition of the binomial coefficients and relation with the MSS-sequence in Example~\ref{ex:3} \label{tab:3}}
% \centering
% \caption{Binomial sequences, first eight terms, period and linear
% complexity\label{tab:1}}
% $
% \def\arraystretch{1.2}
% \begin{array}{|c|c|c|c|}\hline
% \text{Binomial coeff.}&\text{Binomial sequence} & \text{Period} & \text{LC}\\ \hline
% \binom{n}{0} & 1\ 1\ 1\ 1\ 1\ 1\ 1\ 1 & 1 & 1 \\
% \binom{n}{1} & 0\ 1\ 0\ 1\ 0\ 1\ 0\ 1 & 2 & 2 \\
% \binom{n}{2} & 0\ 0\ 1\ 1\ 0\ 0\ 1\ 1 & 4 & 3 \\
% \binom{n}{3} & 0\ 0\ 0\ 1\ 0\ 0\ 0\ 1 & 4 & 4 \\
% \binom{n}{4} & 0\ 0\ 0\ 0\ 1\ 1\ 1\ 1 & 8 & 5 \\
% \binom{n}{5} & 0\ 0\ 0\ 0\ 0\ 1\ 0\ 1 & 8 & 6 \\
% \binom{n}{6} & 0\ 0\ 0\ 0\ 0\ 0\ 1\ 1 & 8 & 7 \\
% \binom{n}{7} & 0\ 0\ 0\ 0\ 0\ 0\ 0\ 1 & 8 & 8 \\\hline
% \end{array}
% $
% \end{table}
% As an example, we can show Tables~\ref{tab:1} and \ref{tab:2}.
% Table~\ref{tab:1} shows the first 8 binary binomial sequences,  their periods   and their linear complexities. 
In \cite{Cardell2019a}, binary binomial sequences were deeply studied. It was showed that the diagonals of the Sierpinski triangle (Pascal's triangle modulo 2) correspond to shifted versions of these sequences (
see Figure~\ref{fig:sierp2}).
%, we can observe that 
% the diagonals of the Sierpinski triangle (Pascal's triangle modulo 2) 
% correspond to shifted versions of these sequences. %(see Figure~\ref{fig:sierp2}).
% Notice that the diagonals of the Sierpinski triangle (Pascal's triangle modulo 2) 
% correspond to shifted versions of these sequences (see Figure~\ref{fig:sierp2}).
Table~\ref{tab:2} shows the $27$ first $3$-ary binomial sequences, their corresponding periods, and linear complexities.
%The linear complexities of the binomial sequences are defined in Theorem~\ref{thm:LC} (Section~\ref{sec:prop}).
Recall that, in this case, the successive binomial sequences correspond to shifted versions of the successive diagonals in the Pascal's triangle reduced modulo $3$ (see Figure~\ref{fig:pascal3}).
Moreover, we can observe that the rows of the Pascal-Hadamard matrix given in expression~\eqref{eq:Pascal_matrix} are the %$p$-ary $k$-th 
binomial sequences.

\color{red}
 \begin{figure}
\caption{Binomial coefficients arranged as triangles}
\centering
\subfloat[Pascal's triangle \label{fig:pascal}]{
\centering
\begin{tikzpicture}[scale=1.75]
%\phantom{\draw (0,0) circle (1);}
\draw (0,0) node [scale=1.4] {$\binom{0}{0}$};

\draw (-0.3,-0.5) node[scale=1.4] {$\binom{1}{0}$};
\draw (0.3,-0.5) node[scale=1.4] {$\binom{1}{1}$};

\draw (-0.6,-1) node[scale=1.4] {$\binom{2}{0}$};
\draw (0,-1) node[scale=1.4] {$\binom{2}{1}$};
\draw (0.6,-1) node[scale=1.4] {$\binom{2}{2}$};

\draw (-0.9,-1.5) node[scale=1.4] {$\binom{3}{0}$};
\draw (-0.3,-1.5) node [scale=1.4]{$\binom{3}{1}$};
\draw (+0.3,-1.5) node[scale=1.4] {$\binom{3}{2}$};
\draw (0.9,-1.5) node [scale=1.4]{$\binom{3}{3}$};

\draw (-1.2,-2) node[scale=1.4] {$\binom{4}{0}$};
\draw (-0.6,-2) node[scale=1.4] {$\binom{4}{1}$};
\draw (-0,-2) node[scale=1.4] {$\binom{4}{2}$};
\draw (+0.6,-2) node[scale=1.4] {$\binom{4}{3}$};
\draw (1.2,-2) node[scale=1.4] {$\binom{4}{4}$};

\draw (-1.5,-2.5) node[scale=1.4] {$\binom{5}{0}$};
\draw (-0.9,-2.5) node[scale=1.4] {$\binom{5}{1}$};
\draw (-0.3,-2.5) node[scale=1.4] {$\binom{5}{2}$};
\draw (0.3,-2.5) node[scale=1.4] {$\binom{5}{3}$};
\draw (+0.9,-2.5) node[scale=1.4] {$\binom{5}{4}$};
\draw (1.5,-2.5) node[scale=1.4] {$\binom{5}{5}$};
\end{tikzpicture}
}
\qquad
\subfloat[Sierpinski's triangle\label{fig:sierp}]{
\centering
\begin{tikzpicture}[scale=0.7]
\phantom{\draw (0,0) circle (0.5);}

%------------------
 \draw [ color=black, fill=black!20](0,0)--(-0.5,-0.87)--(0.5,-0.87)--cycle;
\draw [color=black, fill=black!20](0.5,-0.87)--(0,-1.74)--(1,-1.74)--cycle;
\draw [color=black, fill=black!20](-0.5,-0.87)--(0,-1.74)--(-1,-1.74)--cycle;
\draw [color=black, fill=black!20](1,-1.74)--(1.5, -2.61)--(0.5,-2.61)--cycle;
\draw [color=black, fill=black!20](-1,-1.74)--(-1.5, -2.61)--(-0.5,-2.61)--cycle;
\draw [color=black, fill=black!20](1.5, -2.61)--(2,-3.48)--(1,-3.48)--cycle;
\draw [color=black, fill=black!20](0.5, -2.61)--(1,-3.48)--(0,-3.48)--cycle;
\draw [color=black, fill=black!20](-0.5, -2.61)--(-1,-3.48)--(0,-3.48)--cycle;
\draw [color=black, fill=black!20](-1.5, -2.61)--(-2,-3.48)--(-1,-3.48)--cycle;
%%%%%%%%%%%%%%%%%%%%%%%%%%%%%%%%%%%%%%%%%%%%%%%
\draw [ color=black, fill=black!20](-2,-3.48)--(-2.5,-4.35)--(-1.5,-4.35)--cycle;
\draw [color=black, fill=black!20](-1.5,-4.35)--(-2,-5.22)--(-1,-5.22)--cycle;
\draw [color=black, fill=black!20](-2.5,-4.35)--(-2,-5.22)--(-3,-5.22)--cycle;
\draw [color=black, fill=black!20](-1,-5.22)--(-0.5, -6.09)--(-1.5,-6.09)--cycle;
\draw [color=black, fill=black!20](-3,-5.22)--(-3.5, -6.09)--(-2.5,-6.09)--cycle;
\draw [color=black, fill=black!20](-0.5, -6.09)--(0,-6.96)--(-1,-6.96)--cycle;
\draw [color=black, fill=black!20](-1.5, -6.09)--(-1,-6.96)--(-2,-6.96)--cycle;
\draw [color=black, fill=black!20](-2.5, -6.09)--(-3,-6.96)--(-2,-6.96)--cycle;
\draw [color=black, fill=black!20](-3.5, -6.09)--(-4,-6.96)--(-3,-6.96)--cycle;
%%%%%%%%%%%%%%%%%%%%%%%%%%%%%%%%%%%%%%%%%%%%%%%%%%
%%%%%%%%%%%%%%%%%%%%%%%%%%%%%%%%%%%%%%%%%%%%%%%
\draw [ color=black, fill=black!20](2,-3.48)--(2.5,-4.35)--(1.5,-4.35)--cycle;
\draw [color=black, fill=black!20](1.5,-4.35)--(2,-5.22)--(1,-5.22)--cycle;
\draw [color=black, fill=black!20](2.5,-4.35)--(2,-5.22)--(3,-5.22)--cycle;
\draw [color=black, fill=black!20](1,-5.22)--(0.5, -6.09)--(1.5,-6.09)--cycle;
\draw [color=black, fill=black!20](3,-5.22)--(3.5, -6.09)--(2.5,-6.09)--cycle;
\draw [color=black, fill=black!20](0.5, -6.09)--(0,-6.96)--(1,-6.96)--cycle;
\draw [color=black, fill=black!20](1.5, -6.09)--(1,-6.96)--(2,-6.96)--cycle;
\draw [color=black, fill=black!20](2.5, -6.09)--(3,-6.96)--(2,-6.96)--cycle;
\draw [color=black, fill=black!20](3.5, -6.09)--(4,-6.96)--(3,-6.96)--cycle;
%%%%%%%%%%%%%%%%%%%%%%%%%%%%%%%%%%%%%%%%%%%%%%%nodos
\draw (0,-0.46) node [scale=0.65] {1};

\draw (0.5,-1.4) node [scale=0.65] {1};
\draw (-0.5,-1.4) node [scale=0.65] {1};

\draw (1,-2.23) node [scale=0.65] {1};\draw (0,-2.23) node [scale=0.65] {2};
\draw (-1,-2.23) node [scale=0.65] {1};

\draw (1.5,-3.12) node [scale=0.65] {1};\draw (0.5,-3.12) node [scale=0.65] {3};
\draw (-1.5,-3.12) node [scale=0.65] {1};\draw (-0.5,-3.12) node [scale=0.65] {3};

\draw (2,-4.01) node [scale=0.65] {1};\draw (1,-4.01) node [scale=0.65] {4};\draw (-1,-4.01) node [scale=0.65] {4};
\draw (-2,-4.01) node [scale=0.65] {1};\draw (0,-4.01) node [scale=0.65] {6};

\draw (2.5,-4.9) node [scale=0.65] {1};\draw (0.5,-4.9) node [scale=0.65] {10};
\draw (-2.5,-4.9) node [scale=0.65] {1};\draw (-0.5,-4.9) node [scale=0.65] {10};
\draw (1.5,-4.9) node [scale=0.65] {5};
\draw (-1.5,-4.9) node [scale=0.65] {5};

\draw (3,-5.79) node [scale=0.65] {1};\draw (-0,-5.79) node [scale=0.65] {20};
\draw (-3,-5.79) node [scale=0.65] {1};\draw (2,-5.79) node [scale=0.65] {6};\draw (-2,-5.79) node [scale=0.65] {6};
\draw (-1,-5.79) node [scale=0.65] {15};
\draw (1,-5.79) node [scale=0.65] {15};

\draw (3.5,-6.68) node [scale=0.65] {1};
\draw (-3.5,-6.68) node [scale=0.65] {1};
\draw (-2.5,-6.68) node [scale=0.65] {7};
\draw (-1.5,-6.68) node [scale=0.65] {21};
\draw (-0.5,-6.68) node [scale=0.65] {35};
\draw (0.5,-6.68) node [scale=0.65] {35};
\draw (1.5,-6.68) node [scale=0.65] {21};
\draw (2.5,-6.68) node [scale=0.65] {7};
\end{tikzpicture}
}
\qquad
\subfloat[Sierpinski's triangle mod 2\label{fig:sierp2}]{
\centering
\begin{tikzpicture}[scale=0.7]
\phantom{\draw (0,0) circle (0.5);}

%------------------
\draw [ color=black, fill=black!20](0,0)--(-0.5,-0.87)--(0.5,-0.87)--cycle;
\draw [color=black, fill=black!20](0.5,-0.87)--(0,-1.74)--(1,-1.74)--cycle;
\draw [color=black, fill=black!20](-0.5,-0.87)--(0,-1.74)--(-1,-1.74)--cycle;
\draw [color=black, fill=black!20](1,-1.74)--(1.5, -2.61)--(0.5,-2.61)--cycle;
\draw [color=black, fill=black!20](-1,-1.74)--(-1.5, -2.61)--(-0.5,-2.61)--cycle;
\draw [color=black, fill=black!20](1.5, -2.61)--(2,-3.48)--(1,-3.48)--cycle;
\draw [color=black, fill=black!20](0.5, -2.61)--(1,-3.48)--(0,-3.48)--cycle;
\draw [color=black, fill=black!20](-0.5, -2.61)--(-1,-3.48)--(0,-3.48)--cycle;
\draw [color=black, fill=black!20](-1.5, -2.61)--(-2,-3.48)--(-1,-3.48)--cycle;
%%%%%%%%%%%%%%%%%%%%%%%%%%%%%%%%%%%%%%%%%%%%%%%
\draw [ color=black, fill=black!20](-2,-3.48)--(-2.5,-4.35)--(-1.5,-4.35)--cycle;
\draw [color=black, fill=black!20](-1.5,-4.35)--(-2,-5.22)--(-1,-5.22)--cycle;
\draw [color=black, fill=black!20](-2.5,-4.35)--(-2,-5.22)--(-3,-5.22)--cycle;
\draw [color=black, fill=black!20](-1,-5.22)--(-0.5, -6.09)--(-1.5,-6.09)--cycle;
\draw [color=black, fill=black!20](-3,-5.22)--(-3.5, -6.09)--(-2.5,-6.09)--cycle;
\draw [color=black, fill=black!20](-0.5, -6.09)--(0,-6.96)--(-1,-6.96)--cycle;
\draw [color=black, fill=black!20](-1.5, -6.09)--(-1,-6.96)--(-2,-6.96)--cycle;
\draw [color=black, fill=black!20](-2.5, -6.09)--(-3,-6.96)--(-2,-6.96)--cycle;
\draw [color=black, fill=black!20](-3.5, -6.09)--(-4,-6.96)--(-3,-6.96)--cycle;
%%%%%%%%%%%%%%%%%%%%%%%%%%%%%%%%%%%%%%%%%%%%%%%%%%
%%%%%%%%%%%%%%%%%%%%%%%%%%%%%%%%%%%%%%%%%%%%%%%
\draw [ color=black, fill=black!20](2,-3.48)--(2.5,-4.35)--(1.5,-4.35)--cycle;
\draw [color=black, fill=black!20](1.5,-4.35)--(2,-5.22)--(1,-5.22)--cycle;
\draw [color=black, fill=black!20](2.5,-4.35)--(2,-5.22)--(3,-5.22)--cycle;
\draw [color=black, fill=black!20](1,-5.22)--(0.5, -6.09)--(1.5,-6.09)--cycle;
\draw [color=black, fill=black!20](3,-5.22)--(3.5, -6.09)--(2.5,-6.09)--cycle;
\draw [color=black, fill=black!20](0.5, -6.09)--(0,-6.96)--(1,-6.96)--cycle;
\draw [color=black, fill=black!20](1.5, -6.09)--(1,-6.96)--(2,-6.96)--cycle;
\draw [color=black, fill=black!20](2.5, -6.09)--(3,-6.96)--(2,-6.96)--cycle;
\draw [color=black, fill=black!20](3.5, -6.09)--(4,-6.96)--(3,-6.96)--cycle;
%%%%%%%%%%%%%%%%%%%%%%%%%%%%%%%%%%%%%%%%%%%%%%%nodos
\draw (0,-0.46) node [scale=0.65] {1};

\draw (0.5,-1.4) node [scale=0.65] {1};
\draw (-0.5,-1.4) node [scale=0.65] {1};

\draw (1,-2.23) node [scale=0.65] {1};
\draw (0,-2.23) node [scale=0.65] {0};
\draw (-1,-2.23) node [scale=0.65] {1};

\draw (1.5,-3.12) node [scale=0.65] {1};
\draw (0.5,-3.12) node [scale=0.65] {1};
\draw (-1.5,-3.12) node [scale=0.65] {1};
\draw (-0.5,-3.12) node [scale=0.65] {1};

\draw (2,-4.01) node [scale=0.65] {1}
;\draw (1,-4.01) node [scale=0.65] {0};\draw (-1,-4.01) node [scale=0.65] {0};
\draw (-2,-4.01) node [scale=0.65] {1};\draw (0,-4.01) node [scale=0.65] {0};

\draw (2.5,-4.9) node [scale=0.65] {1};
\draw (0.5,-4.9) node [scale=0.65] {0};
\draw (-2.5,-4.9) node [scale=0.65] {1};
\draw (-0.5,-4.9) node [scale=0.65] {0};
\draw (1.5,-4.9) node [scale=0.65] {1};
\draw (-1.5,-4.9) node [scale=0.65] {1};

\draw (3,-5.79) node [scale=0.65] {1};
\draw (-0,-5.79) node [scale=0.65] {0};
\draw (-3,-5.79) node [scale=0.65] {1};
\draw (2,-5.79) node [scale=0.65] {0};
\draw (-2,-5.79) node [scale=0.65] {0};
\draw (-1,-5.79) node [scale=0.65] {1};
\draw (1,-5.79) node [scale=0.65] {1};

\draw (3.5,-6.68) node [scale=0.65] {1};
\draw (-3.5,-6.68) node [scale=0.65] {1};
\draw (-2.5,-6.68) node [scale=0.65] {1};
\draw (-1.5,-6.68) node [scale=0.65] {1};
\draw (-0.5,-6.68) node [scale=0.65] {1};
\draw (0.5,-6.68) node [scale=0.65] {1};
\draw (1.5,-6.68) node [scale=0.65] {1};
\draw (2.5,-6.68) node [scale=0.65] {1};
\end{tikzpicture}}
\end{figure}
\color{black}

% Next result shows the relation between $p$-ary binomial sequences and $p$-ary sequences with period a power of $p$.

% \begin{theorem}[\cite{Cardell2019a}]
% Let $\{z_n\}$ be a $p$-ary sequence with period $T = p^L$, $L$ being a positive integer. Then, every $p$-ary sequence $\{s_n\}$ can be written as a linear combination of binomial sequences.
% \end{theorem}
%  \begin{proof}
% Since the period of $\{z_n\}$ is a power of $p$, then the next equation holds:
 
% \begin{equation}\label{eq:5}
% (E^{p^{L}}- 1) z_n =(E - 1)^{p^{L}} z_n=0,
% \end{equation}
% due to the characteristic of $\mathbb{F}_p$ is $p$. Moreover, the characteristic polynomial $p_m(x)=p(x)^m=(x-1)^m$. Therefore, its binary solutions are given by the equation~(\ref{eq:4}), which has now the following simplified form:
% \[
% z_n=\binom{n}{0}\,c_0 + \binom{n}{1}\,c_1 + \cdots + \binom{n}{T-1}\,c_{T-1} \quad \text{for }n\geq 0,
% \]
% where
% $1$ is the unique root of the polynomial $(x+1)^T$ with multiplicity $T = 2^L$, the coefficients $c_{i}\in\Fset_2$ and $\binom{n}{i}$ are binomial coefficients modulo 2.
%  
%  \end{proof}
% Different choices of $ c_{i}$ will produce different sequences $\{z_n\}$ with distinct characteristics and properties, but all of them with period $2^l$, $0 \leq l\leq L$.

% Binary binomial sequences were deeply studied in \cite{Cardell2019a}.
In this work we study the binomial sequences over $\mathbb{F}_p$, with $p>2$ a prime.
In the following subsections, we analyse the cases where $p \in \{3,5\}$ in order to illustrate the idea of the general case. The proofs of the results for these particular cases are omitted since we will give the proofs for the general case in Subsection~\ref{Subsect:F_p}.  
 
%%%%%%%%%%%%%%%%%%%%%%%%
%%%%%%%%%%%%%%%%%%%%
%\section{Main results}
\subsection{Sequences over $\mathbb{F}_3$ and $\mathbb{F}_5$} \label{sub:seq3_5}

In this section, we study the $3$-ary and $5$-ary binomial sequences over $\mathbb{F}_3$ and  $\mathbb{F}_5$; we observe some properties that will help us to generalize the case over $\mathbb{F}_p$.

% \begin{theorem}
%     The period of the sequence $\binom{n}{k}$, where $k=3^t+r$ and $r<2\cdot 3^t$ is $T=3^{t+1}$.
%   Furthermore, the sequences have the form   
%     $$\binom{n}{3^t+k}=\left\{\underbrace{00\ldots00}_{3^{t}}, \binom{n}{k} ,2\binom{n}{k}\right\}$$
%     when $k<3^t$ and the form
%     $$\binom{n}{3^t+k}=\left\{\underbrace{00\ldots00}_{2\cdot 3^{t}}, \binom{n}{k} \right\}$$
%     when $k>3^t$.  

% \end{theorem}

In Table~\ref{tab:2}, we observe the first 27 binomial sequences over $\mathbb{F}_3$. A recurrence in the formation of the sequences can be observed.

Let $k$ be a positive integer and $L, r$ be non-negative integers. %For $j \in \{1,2\}$, consider $\binomseq{n}{j}\right\}=(s^{(j)}_0,s^{(j)}_1, s^{(j)}_{2})$. 
The period of the sequence $\textbinomseq{n}{k}$, with $k=3^L+r$ and $0 \leq r<2\cdot 3^L$, is $T=3^{L+1}$.
  Furthermore, the sequences have the form:
  \vspace{0.6em}
  \begin{enumerate}[(a),topsep=0pt,partopsep=0pt,parsep=0pt,itemsep=0pt]
\item For the multiples of the power of $3$, we have that:
  \begin{align}
  \binomseq{n}{3^L} & =\left\{\underbrace{00\ldots00}_{3^{L}}, \underbrace{11\ldots11}_{3^{L}} ,\underbrace{22\ldots22}_{3^{L}}\right\},
  % = \left\{\underbrace{s^{(1)}_0\ldots s^{(1)}_0}_{3^{L}}, \underbrace{s^{(1)}_1\ldots s^{(1)}_1}_{3^{L}}, \ldots, \underbrace{s^{(1)}_{2}\ldots s^{(1)}_{2}}_{3^{L}}\right\}
  \label{seq3.1} \\
    % \binom{n}{3^L+r}& =\left\{\underbrace{00\ldots00}_{3^{L}}, \underbrace{\binom{n}{r}}_{3^{L-i}} ,\underbrace{2\binom{n}{r}}_{3^{L-i}}\right\}
    % \text{ if } 3^{i-1}\leq r<3^{i} \text{ for } i=1,2,\ldots, L. \label{seq3.2}\\
    \binomseq{n}{2\cdot 3^L} & =\left\{\underbrace{00\ldots00}_{2 \cdot 3^{L}}, %\underbrace{00\ldots00}_{3^{L}}
    \underbrace{11\ldots11}_{3^{L}}
    \right\}.
    % = \left\{\underbrace{s^{(2)}_0\ldots s^{(2)}_0}_{3^{L}}, \underbrace{s^{(2)}_1\ldots s^{(2)}_1}_{3^{L}}, \ldots, \underbrace{s^{(2)}_{2}\ldots s^{(2)}_{2}}_{3^{L}}\right\}
    \label{seq3.3} 
    % \binom{n}{2\cdot 3^L+r}& =\left\{\underbrace{00\ldots00}_{2\cdot 3^{L}}, \underbrace{\binom{n}{r}}_{3^{L-i}}\right\}
    % \text{ if }3^{i-1}\leq r<3^{i} \text{ for } i=1,2,\ldots, L. \label{seq3.4} 
   \end{align}
   \item For $i=1,2,\ldots, L$ and  $3^{i-1}\leq r<3^{i}$, we have that
  \begin{align}
  % \binom{n}{3^L}& =\left\{\underbrace{00\ldots00}_{3^{L}}, \underbrace{11\ldots11}_{3^{L}} ,\underbrace{22\ldots22}_{3^{L}}\right\} \label{seq3.1} \\
   \binomseq{n}{3^L+r} & =\left\{\underbrace{00\ldots00}_{3^{L}}, \underbrace{\binomseq{n}{r}}_{3^{L-i}} ,\underbrace{2\binomseq{n}{r}}_{3^{L-i}}\right\},
   % \text{ if } 3^{i-1}\leq r<3^{i} \text{ for } i=1,2,\ldots, L. 
   \label{seq3.2}
   \\
    % \binom{n}{2\cdot 3^L}& =\left\{\underbrace{00\ldots00}_{2\cdot 3^{L}}, \underbrace{11\ldots11}_{3^{L}}\right\} \label{seq3.3} \\
     \binomseq{n}{2\cdot 3^L+r} & =\bigg\{\underbrace{00\ldots00}_{2\cdot 3^{L}}, \underbrace{\binomseq{n}{r}}_{3^{L-i}}\bigg\}.
   % \text{ if }3^{i-1}\leq r<3^{i} \text{ for } i=1,2,\ldots, L. 
   \label{seq3.4} 
   \end{align}
   where $\underbrace{\binomseq{n}{r}}_{3^{L-i}}$ means that the sequence $\textbinomseq{n}{r}$ is repeated $3^{L-i}$ times.
\end{enumerate}
Observe that    the sequence $\textbinomseq{n}{3^L-1}$ with $L\geq 1$, is of the form $\big\{\underbrace{00\ldots00}_{3^L-1} 1\big\}$ with period $3^L$.

Next example illustrates how sequences are recursively constructed from each other.

\begin{example}
Consider the $3$-ary sequence $\textbinomseq{n}{10}$ given in Table~\ref{tab:2}. We have that $k=10=3^2+1$ with $r=1$ satisfying that $1 \leq r < 3$. 
%From expression~\eqref{seq3.2}, 
We have that
\[
\binomseq{n}{3^2+1} = 
\begin{array}{ccccccccc|ccccccccc|ccccccccc}
\left\{\textcolor{blue}{000000000}\textcolor{red}{012012012}\textcolor{green}{021021021}\right\}
\end{array}
=
\left\{
\underbrace{\textcolor{blue}{000000000}}_{3^{2}},
\underbrace{\textcolor{red}{\binomseq{n}{1}}}_{3^1},
\underbrace{\textcolor{green}{2\binomseq{n}{1}}}_{3^1}
\right\}.
\]
% where the sequences $\textbinomseq{n}{1}$ and $\textbinomseq{n}{10}$ appear in Table~\ref{tab:2}.

Now, if we consider the $3$-ary sequence $\textbinomseq{n}{12}$, we have that $k=10=3^2+3$ with $r=3$ satisfying that $3 \leq r < 3^2$. 
%From expression~\ref{seq3.2}, 
Now, we obtain
\[
\binomseq{n}{3^2+3}=
\begin{array}{ccccccccc|ccccccccc|ccccccccc}
\left\{\textcolor{blue}{000000000}\textcolor{red}{000111222}\textcolor{green}{000222111}\right\}
=
\left\{\underbrace{\textcolor{blue}{000000000}}_{3^{2}}, \underbrace{\textcolor{red}{\binomseq{n}{3}}}_{3^0} ,\underbrace{\textcolor{green}{2\binomseq{n}{3}}}_{3^0}\right\}.
\end{array}
\]
% where the sequences $\textbinomseq{n}{3}$ and $\textbinomseq{n}{12}$ appear in Table~\ref{tab:2}.

Finally, we take the sequence $\textbinomseq{n}{22}$. We have that $k=22=2\cdot3^2+4$ with $r=4$ satisfying that $3 \leq r < 3^2$. 
We have that
\[
\binomseq{n}{2\cdot3^2+4}=
\begin{array}{cccccccccccccccccc|ccccccccc}
\left\{\textcolor{blue}{000000000000000000}\textcolor{red}{000012021}\right\}
=
\left\{\underbrace{\textcolor{blue}{000000000000000000}}_{2 \cdot 3^{2}}, \underbrace{\textcolor{red}{\binomseq{n}{4}}}_{3^0}\right\},
\end{array}
\]
which can be checked in Table~\ref{tab:2}.

\end{example}

Through the previous example, we observe that the length of the sequences defined by $\textbinomseq{n}{r}$ varies depending on the range in which $r$ falls.
We can deduce the following construction for $3$-ary sequences, which will be generalised and proved in this paper.

It is worth noticing that these sequences are the diagonals of the Pascal's triangle modulo 3. In Figure~\ref{fig:pascal3}, we find this triangle. It is possible to see the recurrence properties already mentioned and the fractal form of the triangle.
\begin{figure}
    \centering
\input{triang3}    \caption{Pascal's triangle modulo 3}
    \label{fig:pascal3}
\end{figure}

\begin{table}
\caption{First 27 binomial sequences over $\mathbb{F}_3$, periods   and linear complexities. }\label{tab:2}
\def\arraystretch{1.1}
\begin{tabular}{|c|c|c|c|}\hline
\text{Binomial sequence}&\text{First terms} & \text{Period}& \text{LC}\\ \hline
$\textbinomseq{n}{0}$  & \textcolor{   red}{1\:\!1\:\!1\:\!1\:\!1\:\!1\:\!1\:\!1\:\!1}\:\!1\:\!1\:\!1\:\!1\:\!1\:\!1\:\!1\:\!1\:\!1\:\!1\:\!1\:\!1\:\!1\:\!1\:\!1\:\!1\:\!1\:\!1 &  1 &  1 \\
$\textbinomseq{n}{1}$  & \textcolor{  blue}{0\:\!1\:\!2\:\!0\:\!1\:\!2\:\!0\:\!1\:\!2}\:\!0\:\!1\:\!2\:\!0\:\!1\:\!2\:\!0\:\!1\:\!2\:\!0\:\!1\:\!2\:\!0\:\!1\:\!2\:\!0\:\!1\:\!2 &  3 &  2 \\
$\textbinomseq{n}{2} $ & \textcolor{orange}{0\:\!0\:\!1\:\!0\:\!0\:\!1\:\!0\:\!0\:\!1}\:\!0\:\!0\:\!1\:\!0\:\!0\:\!1\:\!0\:\!0\:\!1\:\!0\:\!0\:\!1\:\!0\:\!0\:\!1\:\!0\:\!0\:\!1 &  3 &  3 \\
$\textbinomseq{n}{3}$  & \textcolor{purple}{0\:\!0\:\!0\:\!1\:\!1\:\!1\:\!2\:\!2\:\!2}\:\!0\:\!0\:\!0\:\!1\:\!1\:\!1\:\!2\:\!2\:\!2\:\!0\:\!0\:\!0\:\!1\:\!1\:\!1\:\!2\:\!2\:\!2 &  9 &  4 \\
$\textbinomseq{n}{4}$  & \textcolor{yellow}{0\:\!0\:\!0\:\!0\:\!1\:\!2\:\!0\:\!2\:\!1}\:\!0\:\!0\:\!0\:\!0\:\!1\:\!2\:\!0\:\!2\:\!1\:\!0\:\!0\:\!0\:\!0\:\!1\:\!2\:\!0\:\!2\:\!1 &  9 &  5 \\
$\textbinomseq{n}{5}$  & \textcolor{  pink}{0\:\!0\:\!0\:\!0\:\!0\:\!1\:\!0\:\!0\:\!2}\:\!0\:\!0\:\!0\:\!0\:\!0\:\!1\:\!0\:\!0\:\!2\:\!0\:\!0\:\!0\:\!0\:\!0\:\!1\:\!0\:\!0\:\!2 &  9 &  6 \\
$\textbinomseq{n}{6}$  & \textcolor{orange}{0\:\!0\:\!0\:\!0\:\!0\:\!0\:\!1\:\!1\:\!1}\:\!0\:\!0\:\!0\:\!0\:\!0\:\!0\:\!1\:\!1\:\!1\:\!0\:\!0\:\!0\:\!0\:\!0\:\!0\:\!1\:\!1\:\!1 &  9 &  7 \\
$\textbinomseq{n}{7}$  & \textcolor{  gray}{0\:\!0\:\!0\:\!0\:\!0\:\!0\:\!0\:\!1\:\!2}\:\!0\:\!0\:\!0\:\!0\:\!0\:\!0\:\!0\:\!1\:\!2\:\!0\:\!0\:\!0\:\!0\:\!0\:\!0\:\!0\:\!1\:\!2 &  9 &  8 \\
$\textbinomseq{n}{8}$  & \textcolor{ green}{0\:\!0\:\!0\:\!0\:\!0\:\!0\:\!0\:\!0\:\!1}\:\!0\:\!0\:\!0\:\!0\:\!0\:\!0\:\!0\:\!0\:\!1\:\!0\:\!0\:\!0\:\!0\:\!0\:\!0\:\!0\:\!0\:\!1 &  9 &  9 \\
$\textbinomseq{n}{9}$  & 0\:\!0\:\!0\:\!0\:\!0\:\!0\:\!0\:\!0\:\!0\:\!\textcolor{   red}{1\:\!1\:\!1\:\!1\:\!1\:\!1\:\!1\:\!1\:\!1}\:\!2\:\!2\:\!2\:\!2\:\!2\:\!2\:\!2\:\!2\:\!2 & 27 & 10 \\
$\textbinomseq{n}{10}$ & 0\:\!0\:\!0\:\!0\:\!0\:\!0\:\!0\:\!0\:\!0\:\!\textcolor{  blue}{0\:\!1\:\!2\:\!0\:\!1\:\!2\:\!0\:\!1\:\!2}\:\!0\:\!2\:\!1\:\!0\:\!2\:\!1\:\!0\:\!2\:\!1 & 27 & 11 \\
$\textbinomseq{n}{11}$ & 0\:\!0\:\!0\:\!0\:\!0\:\!0\:\!0\:\!0\:\!0\:\!\textcolor{orange}{0\:\!0\:\!1\:\!0\:\!0\:\!1\:\!0\:\!0\:\!1}\:\!0\:\!0\:\!2\:\!0\:\!0\:\!2\:\!0\:\!0\:\!2 & 27 & 12 \\
$\textbinomseq{n}{12}$ & 0\:\!0\:\!0\:\!0\:\!0\:\!0\:\!0\:\!0\:\!0\:\!\textcolor{purple}{0\:\!0\:\!0\:\!1\:\!1\:\!1\:\!2\:\!2\:\!2}\:\!0\:\!0\:\!0\:\!2\:\!2\:\!2\:\!1\:\!1\:\!1 & 27 & 13 \\
$\textbinomseq{n}{13}$ & 0\:\!0\:\!0\:\!0\:\!0\:\!0\:\!0\:\!0\:\!0\:\!\textcolor{yellow}{0\:\!0\:\!0\:\!0\:\!1\:\!2\:\!0\:\!2\:\!1}\:\!0\:\!0\:\!0\:\!0\:\!2\:\!1\:\!0\:\!1\:\!2 & 27 & 14 \\
$\textbinomseq{n}{14}$ & 0\:\!0\:\!0\:\!0\:\!0\:\!0\:\!0\:\!0\:\!0\:\!\textcolor{  pink}{0\:\!0\:\!0\:\!0\:\!0\:\!1\:\!0\:\!0\:\!2}\:\!0\:\!0\:\!0\:\!0\:\!0\:\!2\:\!0\:\!0\:\!1 & 27 & 15 \\
$\textbinomseq{n}{15}$ & 0\:\!0\:\!0\:\!0\:\!0\:\!0\:\!0\:\!0\:\!0\:\!\textcolor{orange}{0\:\!0\:\!0\:\!0\:\!0\:\!0\:\!1\:\!1\:\!1}\:\!0\:\!0\:\!0\:\!0\:\!0\:\!0\:\!2\:\!2\:\!2 & 27 & 16 \\
$\textbinomseq{n}{16}$ & 0\:\!0\:\!0\:\!0\:\!0\:\!0\:\!0\:\!0\:\!0\:\!\textcolor{  gray}{0\:\!0\:\!0\:\!0\:\!0\:\!0\:\!0\:\!1\:\!2}\:\!0\:\!0\:\!0\:\!0\:\!0\:\!0\:\!0\:\!2\:\!1 & 27 & 17 \\
$\textbinomseq{n}{17}$ & 0\:\!0\:\!0\:\!0\:\!0\:\!0\:\!0\:\!0\:\!0\:\!\textcolor{ green}{0\:\!0\:\!0\:\!0\:\!0\:\!0\:\!0\:\!0\:\!1}\:\!0\:\!0\:\!0\:\!0\:\!0\:\!0\:\!0\:\!0\:\!2 & 27 & 18 \\
$\textbinomseq{n}{18}$ & 0\:\!0\:\!0\:\!0\:\!0\:\!0\:\!0\:\!0\:\!0\:\!0\:\!0\:\!0\:\!0\:\!0\:\!0\:\!0\:\!0\:\!0\:\!\textcolor{   red}{1\:\!1\:\!1\:\!1\:\!1\:\!1\:\!1\:\!1\:\!1} & 27 & 19 \\
$\textbinomseq{n}{19}$ & 0\:\!0\:\!0\:\!0\:\!0\:\!0\:\!0\:\!0\:\!0\:\!0\:\!0\:\!0\:\!0\:\!0\:\!0\:\!0\:\!0\:\!0\:\!\textcolor{  blue}{0\:\!1\:\!2\:\!0\:\!1\:\!2\:\!0\:\!1\:\!2} & 27 & 20 \\
$\textbinomseq{n}{20}$ & 0\:\!0\:\!0\:\!0\:\!0\:\!0\:\!0\:\!0\:\!0\:\!0\:\!0\:\!0\:\!0\:\!0\:\!0\:\!0\:\!0\:\!0\:\!\textcolor{orange}{0\:\!0\:\!1\:\!0\:\!0\:\!1\:\!0\:\!0\:\!1} & 27 & 21 \\
$\textbinomseq{n}{21}$ & 0\:\!0\:\!0\:\!0\:\!0\:\!0\:\!0\:\!0\:\!0\:\!0\:\!0\:\!0\:\!0\:\!0\:\!0\:\!0\:\!0\:\!0\:\!\textcolor{purple}{0\:\!0\:\!0\:\!1\:\!1\:\!1\:\!2\:\!2\:\!2} & 27 & 22 \\
$\textbinomseq{n}{22}$ & 0\:\!0\:\!0\:\!0\:\!0\:\!0\:\!0\:\!0\:\!0\:\!0\:\!0\:\!0\:\!0\:\!0\:\!0\:\!0\:\!0\:\!0\:\!\textcolor{yellow}{0\:\!0\:\!0\:\!0\:\!1\:\!2\:\!0\:\!2\:\!1} & 27 & 23 \\
$\textbinomseq{n}{23}$ & 0\:\!0\:\!0\:\!0\:\!0\:\!0\:\!0\:\!0\:\!0\:\!0\:\!0\:\!0\:\!0\:\!0\:\!0\:\!0\:\!0\:\!0\:\!\textcolor{  pink}{0\:\!0\:\!0\:\!0\:\!0\:\!1\:\!0\:\!0\:\!2} & 27 & 24 \\
$\textbinomseq{n}{24}$ & 0\:\!0\:\!0\:\!0\:\!0\:\!0\:\!0\:\!0\:\!0\:\!0\:\!0\:\!0\:\!0\:\!0\:\!0\:\!0\:\!0\:\!0\:\!\textcolor{orange}{0\:\!0\:\!0\:\!0\:\!0\:\!0\:\!1\:\!1\:\!1} & 27 & 25 \\
$\textbinomseq{n}{25}$ & 0\:\!0\:\!0\:\!0\:\!0\:\!0\:\!0\:\!0\:\!0\:\!0\:\!0\:\!0\:\!0\:\!0\:\!0\:\!0\:\!0\:\!0\:\!\textcolor{  gray}{0\:\!0\:\!0\:\!0\:\!0\:\!0\:\!0\:\!1\:\!2} & 27 & 26 \\
$\textbinomseq{n}{26}$ & 0\:\!0\:\!0\:\!0\:\!0\:\!0\:\!0\:\!0\:\!0\:\!0\:\!0\:\!0\:\!0\:\!0\:\!0\:\!0\:\!0\:\!0\:\!\textcolor{ green}{0\:\!0\:\!0\:\!0\:\!0\:\!0\:\!0\:\!0\:\!1} & 27 & 27 \\
\hline
\end{tabular}
\end{table}

% \begin{itemize}
%     \item The period of $\binom{n}{k}$, where $k=3^t+r$, is $T=3^{t+1}$.
%     \item 
%     $\binom{n}{3^t}=\underbrace{00\ldots00}_{3^t} \underbrace{11\ldots11}_{3^t}\underbrace{22\ldots 22}_{3^t}$
%         \item 
%     $\binom{n}{3^t-1}=\underbrace{00\ldots00}_{3^t-1} 1$

%      \item 
%     $\binom{n}{3^t+k}=[\underbrace{00\ldots00}_{3^{t}}, \binom{n}{k} ,2\binom{n}{k}], k<3^t$

%        \item 
%     $\binom{n}{3^t+k}=[\underbrace{00\ldots00}_{2\cdot 3^{t}}, \binom{n}{k} ], k>3^t$
% \end{itemize}

%\subsection{Sequences over $\mathbb{F}_5$}

%As in the previous subsection,
With the aim of obtaining a generalisation over $\mathbb{F}_p$, we can provide a construction for $5$-ary binomial sequences over $\mathbb{F}_5$. These sequences appear in Appendix~A.
%with the aim of obtaining a generalisation over $\mathbb{F}_p$.

% \begin{table}[h]
% \caption{First $5$ binomial sequences over $\mathbb{F}_5$, first $5$ terms, periods and linear complexities. }\label{tab:p5}
% \centering
%  $
% \def\arraystretch{1.1}
% \begin{array}{|c|c|c|c|}\hline
% \text{Binomial sequences}&\text{First terms} & \text{Period}& \text{LC}\\ \hline
% \textbinomseq{n}{0} & \textcolor{   red}{1\:\!1\:\!1\:\!1\:\!1} &  1 &  1 \\
% \textbinomseq{n}{1} & \textcolor{  blue}{0\:\!1\:\!2\:\!3\:\!4} &  5 &  2 \\
% \textbinomseq{n}{2} & \textcolor{orange}{0\:\!0\:\!1\:\!3\:\!1}&  5 &  3 \\
% \textbinomseq{n}{3} & \textcolor{purple}{0\:\!0\:\!0\:\!1\:\!4}&  5 &  4 \\
% \textbinomseq{n}{4} & \textcolor{green}{0\:\!0\:\!0\:\!0\:\!1}&  5 &  5 \\
% \hline
% \end{array}
% $
% \end{table}

\begin{table}
\caption{First $5$ binomial sequences over $\mathbb{F}_5$, first $5$ terms, periods and linear complexities. }\label{tab:p5}
\centering
\def\arraystretch{1.1}
\begin{tabular}{|c|c|c|c|}\hline
\text{Binomial sequences}&\text{First terms} & \text{Period}& \text{LC}\\ \hline
$\textbinomseq{n}{0}$ &$ \textcolor{   red}{1\:\!1\:\!1\:\!1\:\!1}$ &  1 &  1 \\
$\textbinomseq{n}{1}$ & $\textcolor{  blue}{0\:\!1\:\!2\:\!3\:\!4}$ &  5 &  2 \\
$\textbinomseq{n}{2}$ & $\textcolor{orange}{0\:\!0\:\!1\:\!3\:\!1}$&  5 &  3 \\
$\textbinomseq{n}{3}$ & $\textcolor{purple}{0\:\!0\:\!0\:\!1\:\!4}$&  5 &  4 \\
$\textbinomseq{n}{4}$ & $\textcolor{green}{0\:\!0\:\!0\:\!0\:\!1}$&  5 &  5 \\
\hline
\end{tabular}
\end{table}

Let $k$ be a positive integer and $L, r$ be non-negative integers. The period of the sequence $\textbinomseq{n}{k}$, where $k=5^L+r$ and $0 \leq r<4\cdot 5^L$
    is $T=5^{L+1}$.
  Furthermore, the sequences have the following forms:
  \vspace{0.6em}
  \begin{enumerate}[(a),topsep=0pt,partopsep=0pt,parsep=0pt,itemsep=0pt]
      \item For the multiples of the power of $5$, we have that:
       \begin{align*}
   \binomseq{n}{\textcolor{blue}{1}\cdot5^L} & =\color{blue}\left\{\underbrace{00\ldots00}_{ 5^{L}}, \underbrace{11\ldots11}_{5^{L}},\underbrace{22\ldots22}_{5^{L}}, \underbrace{33\ldots33}_{5^{L}}, \underbrace{44\ldots44}_{5^{L}}\right\}, \\
  \binomseq{n}{\textcolor{orange}{2} \cdot 5^L}& =\left\{\color{orange}\underbrace{00\ldots00}_{5^{L}}, \underbrace{00\ldots00}_{5^{L}}, \underbrace{11\ldots11}_{5^{L}},\underbrace{33\ldots33}_{5^{L}}, \underbrace{11\ldots11}_{5^{L}}\color{black}\right\}, \\
   \binomseq{n}{\textcolor{purple}{3} \cdot 5^L}& =\left\{\color{purple}\underbrace{00\ldots00}_{5^{L}}, \underbrace{00\ldots00}_{5^{L}}, \underbrace{00\ldots00}_{5^{L}} \underbrace{11\ldots11}_{5^{L}},\underbrace{44\ldots44}_{5^{L}}\color{black}\right\},  \\
   \binomseq{n}{\textcolor{green}{4} \cdot 5^L}& =\left\{\color{green}\underbrace{00\ldots00}_{5^{L}}, \underbrace{00\ldots00}_{5^{L}},\underbrace{00\ldots00}_{5^{L}},\underbrace{00\ldots00}_{5^{L}},\underbrace{11\ldots11}_{5^{L}} \color{black}\right\}.
  \end{align*}
  % \begin{align}
  %  \binomseq{n}{5^L}& =\left\{\underbrace{00\ldots00}_{5^{L}}, \underbrace{11\ldots11}_{5^{L}},\underbrace{22\ldots22}_{5^{L}}, \underbrace{33\ldots33}_{5^{L}}, \underbrace{44\ldots44}_{5^{L}}\right\} \label{seqa5.1} \\
  % \binomseq{n}{2 \cdot 5^L}\right\}& =\left\{\underbrace{00\ldots00}_{2 \cdot 5^{L}}, \underbrace{11\ldots11}_{5^{L}},\underbrace{33\ldots33}_{5^{L}}, \underbrace{11\ldots11}_{5^{L}}\right\} \label{seqa5.2} \\
  %  \binomseq{n}{3 \cdot 5^L}\right\}& =\left\{\underbrace{00\ldots00}_{3 \cdot 5^{L}}, \underbrace{11\ldots11}_{5^{L}},\underbrace{44\ldots44}_{5^{L}}\right\} \label{seqa5.3} \\
  %  \binomseq{n}{4 \cdot 5^L}& =\left\{\underbrace{00\ldots00}_{4 \cdot 5^{L}}, \underbrace{11\ldots11}_{5^{L}}\right\} \label{seqa5.4}
  % \end{align}
  \item 
  % For $i=1,2, \ldots L$ and $5^{i-1}\leq r<5^{i}$, we have that
  % \begin{align}
  %   \binom{n}{5^L+r}& =\left\{\underbrace{00\ldots00}_{5^{L}}, \underbrace{\binom{n}{r}}_{5^{L-i}} ,\underbrace{2\binom{n}{r}}_{5^{L-i}},\underbrace{3\binom{n}{r}}_{5^{L-i}}, \underbrace{4\binom{n}{r}}_{5^{L-i}}\right\}
  %   %\text{ if } \ 5^{i-1}\leq r<5^{i}  \text{ for } i=1,2, \ldots L. 
  %   \label{seqb5.1}\\
  %   \binom{n}{2\cdot 5^L+r}& =\left\{\underbrace{00\ldots00}_{2\cdot 5^{L}}, \underbrace{\binom{n}{r}}_{5^{L-i}}, \underbrace{3\binom{n}{r}}_{5^{L-i}}, \underbrace{\binom{n}{r}}_{5^{L-i}}\right\}   %\text{ if } \ 5^{i-1}\leq r<5^{i}  \text{ for } i=1,2, \ldots L. 
  %   \label{seqb5.2} \\
  %   \binom{n}{3\cdot 5^L+r}& =\left\{\underbrace{00\ldots00}_{3\cdot 5^{L}}, \underbrace{\binom{n}{r}}_{5^{L-i}},\underbrace{4\binom{n}{r}}_{5^{L-i}}\right\}
  %  % \text{ if } \ 5^{i-1}\leq r<5^{i}  \text{ for } i=1,2, \ldots L. 
  %  \label{seqb5.3} \\
  %   \binom{n}{4 \cdot 5^L+r}& =\left\{\underbrace{00\ldots00}_{4\cdot 5^{L}}, \underbrace{\binom{n}{r}}_{5^{L-i}}\right\}
  %  %\text{ if } \ 5^{i-1}\leq r<5^{i}  \text{ for } i=1,2, \ldots L. 
  %  \label{seqb5.4}
  For $i=1,2, \ldots L$ and $5^{i-1}\leq r<5^{i}$,
  \begin{align*}
  \binomseq{n}{\textcolor{blue}{1}\cdot 5^L+r} 
  & =\left\{
  \underbrace{\textcolor{blue}{0} \cdot\binomseq{n}{r}}_{5^{L-i}}, \underbrace{\textcolor{blue}{1}\cdot\binomseq{n}{r}}_{5^{L-i}} ,\underbrace{\textcolor{blue}{2}\cdot\binomseq{n}{r}}_{5^{L-i}},\underbrace{\textcolor{blue}{3}\cdot\binomseq{n}{r}}_{5^{L-i}}, \underbrace{\textcolor{blue}{4}\cdot\binomseq{n}{r}}_{5^{L-i}}\right\}, \\
   % \text{ if } \ 5^{i-1}\leq r<5^{i}  \text{ for } i=1,2, \ldots L\\
   \binomseq{n}{\textcolor{orange}{2}\cdot 5^L+r} & =
    \left\{
        \underbrace{\textcolor{orange}{0} \cdot \binomseq{n}{r}}_{5^{L-i}},
        \underbrace{\textcolor{orange}{0} \cdot \binomseq{n}{r}}_{5^{L-i}},
        \underbrace{\textcolor{orange}{1} \cdot \binomseq{n}{r}}_{5^{L-i}},
        \underbrace{\textcolor{orange}{3} \cdot \binomseq{n}{r}}_{5^{L-i}},
        \underbrace{\textcolor{orange}{1} \cdot \binomseq{n}{r}}_{5^{L-i}}
    \right\}, \\  %\text{ if } \ 5^{i-1}\leq r<5^{i}  \text{ for } i=1,2, \ldots L.\\
    \binomseq{n}{\textcolor{purple}{3} \cdot 5^L+r}& =\left\{\underbrace{\textcolor{purple}{0} \cdot \binomseq{n}{r}}_{5^{L-i}},\underbrace{\textcolor{purple}{0} \cdot\binomseq{n}{r}}_{5^{L-i}}, \underbrace{\textcolor{purple}{0} \cdot\binomseq{n}{r}}_{5^{L-i}},\underbrace{\textcolor{purple}{1} \cdot\binomseq{n}{r}}_{5^{L-i}},\underbrace{\textcolor{purple}{4} \cdot\binomseq{n}{r}}_{5^{L-i}}\right\}, \\
   % \text{ if } \ 5^{i-1}\leq r<5^{i}  \text{ for } i=1,2, \ldots L.  \\
    \binomseq{n}{\textcolor{green}{4} \cdot 5^L+r}& =\left\{\underbrace{\textcolor{green}{0} \cdot\binomseq{n}{r}}_{5^{L-i}},\underbrace{\textcolor{green}{0} \cdot\binomseq{n}{r}}_{5^{L-i}}, \underbrace{\textcolor{green}{0} \cdot\binomseq{n}{r}}_{5^{L-i}}, \underbrace{\textcolor{green}{0} \cdot\binomseq{n}{r}}_{5^{L-i}},  \underbrace{\textcolor{green}{1} \cdot\binomseq{n}{r}}_{5^{L-i}}\right\}.
  % \text{ if } \ 5^{i-1}\leq r<5^{i}  \text{ for } i=1,2, \ldots L.
   \end{align*}
    \end{enumerate}

Note that sequences of the form $\textbinomseq{n}{t \cdot 5^L+r}$, for $t=1,2,3,4$,  given in the previous remark, are constructed from multiples of the sequences $\textbinomseq{n}{r}\}$, where the coefficients are given by the elements of the sequence $\textbinomseq{n}{t}$, which appear in Table~\ref{tab:p5} 
(a table with the first 25 binomial 5-ary sequences can be found in the appendix).
  A similar result can be obtained for the sequences of the form $\textbinomseq{n}{t \cdot 5^L}$, for $t=1,2,3,4$, where these sequences are obtained directly through the terms of the  binomial sequences $\textbinomseq{n}{t}$. 
  %Observe that we can rewrite the expressions~\ref{seqa5.1}-\ref{seqa5.4} as follow:
 % \begin{align*}
 %   \binomseq{n}{\textcolor{blue}{1}\cdot5^L}\right\}& =\color{blue}\left\{\underbrace{00\ldots00}_{ 5^{L}}, \underbrace{11\ldots11}_{5^{L}},\underbrace{22\ldots22}_{5^{L}}, \underbrace{33\ldots33}_{5^{L}}, \underbrace{44\ldots44}_{5^{L}}\right\}  \\
 %  \binomseq{n}{\textcolor{orange}{2} \cdot 5^L}\right\}& =\left\{\color{orange}\underbrace{00\ldots00}_{5^{L}}, \underbrace{00\ldots00}_{5^{L}}, \underbrace{11\ldots11}_{5^{L}},\underbrace{33\ldots33}_{5^{L}}, \underbrace{11\ldots11}_{5^{L}}\color{black}\right\} \\
 %   \binomseq{n}{3 \cdot 5^L}\right\}& =\left\{\underbrace{00\ldots00}_{3 \cdot 5^{L}}, \underbrace{11\ldots11}_{5^{L}},\underbrace{44\ldots44}_{5^{L}}\right\}  \\
 %   \binomseq{n}{4 \cdot 5^L}\right\}& =\left\{\underbrace{00\ldots00}_{4 \cdot 5^{L}}, \underbrace{11\ldots11}_{5^{L}} \right\}
 %  \end{align*}
 This observation will help us to generalise $p$-ary binomial  sequences in the next subsection.
 
% Note that, the $5$-ary sequences are based on the first $4$ $5$-ary sequences $\binom{n}{1}, \binom{n}{2}, \binom{n}{3}, \binom{n}{4}$. 
As a consequence of the previous remark, we have that the sequence $\textbinomseq{n}{5^L-1}$ with $L\geq 1$, is of the form $\{\underbrace{00\ldots00}_{5^L-1} 1\}$ with period $5^L$.
% the following result.
% \begin{corollary}
%     The sequence $\textbinomseq{n}{5^L-1}$ with $L\geq 1$, is of the form $\{\underbrace{00\ldots00}_{5^L-1} 1\}$ with period $5^L$.
% \end{corollary}

%\begin{itemize}
    % \item The period of $\binom{n}{k}$, where $k=5^t+r$, is $T=5^{t+1}$.
    % \item 
    % $\binom{n}{5^t}=\underbrace{00\ldots00}_{5^t} \underbrace{11\ldots11}_{5^t}\underbrace{22\ldots 22}_{5^t}\underbrace{3\ldots 3}_{5^t}
    % \underbrace{44\ldots 44}_{5^t}$
    %     \item 
    % $\binom{n}{5^t-1}=\underbrace{00\ldots00}_{5^t-1} 1$

    %  \item 
    % $\binom{n}{5^t+k}=[\underbrace{00\ldots00}_{5^{t}}, \binom{n}{k} ,2\binom{n}{k},3\binom{n}{k}, 4\binom{n}{k}], k<5^t$

% \item 
%     $\binom{n}{5^t+k}=[\underbrace{00\ldots00}_{2\cdot 5^{t}},  \binom{n}{k}, 3\binom{n}{k} , \binom{n}{k}], 5^t<k<2\cdot 5^t$
    %    \item 
    % $\binom{n}{5^t+k}=[\underbrace{00\ldots00}_{3\cdot 5^{t}}, \binom{n}{k},4\binom{n}{k} ],  3\cdot 5^t<k<4\cdot 5^t$

    %        \item 
%     % $\binom{n}{5^t+k}=[\underbrace{00\ldots00}_{4\cdot 5^{t}}, \binom{n}{k}], 4\cdot   5^t<k< 5^{t+1}$
% \end{itemize}

\subsection{Sequences over $\mathbb{F}_p$} \label{Subsect:F_p}
 In this section, we study the properties of the binomial sequences over $\mathbb{F}_p$ generalising some of the results obtained in the previous subsections. 

\subsubsection{Structure and period}
 We start analysing the formation rules and period of this family of sequences.

\begin{theorem}\label{thm:period}
    The period of the sequence $\textbinomseq{n}{p^L+k}$, where $0\leq k<p^{L+1}-1$ is $T=p^{L+1}$.
\end{theorem}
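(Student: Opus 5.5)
The plan is to show two things: that $p^{L+1}$ is a period of the sequence, and that no proper divisor of $p^{L+1}$ is a period. The tool throughout is the generating-function identity $(1+x)^{p^{j}}=1+x^{p^{j}}$ in $\mathbb{F}_p[x]$.

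I will read the hypothesis as saying that the index $m=p^L+k$ satisfies $p^L\le m<p^{L+1}$, that is, $0\le k<(p-1)p^L$. This matches the $p=3$ and $p=5$ cases described above. For larger $k$ the sequence would belong to the next power of $p$, and the statement would have to be read with $L$ adjusted accordingly.

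\textbf{Step 1: $p^{L+1}$ is a period.} Write $s_n=\binom{n}{m}\bmod p$. This is consistent with the definition of the binomial sequence, since $\binom{n}{m}=0$ for $n<m$. Working in $\mathbb{F}_p[x]$, we have
\[
(1+x)^{n+p^{L+1}}=(1+x)^n(1+x)^{p^{L+1}}=(1+x)^n\bigl(1+x^{p^{L+1}}\bigr).
\]
The coefficient of $x^m$ on the right is $\binom{n}{m}+\binom{n}{m-p^{L+1}}$. The second term vanishes because $m<p^{L+1}$. Hence $s_{n+p^{L+1}}=s_n$ for all $n\ge 0$.

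\textbf{Step 2: reduce to a single candidate.} The least period $T$ divides every period. This is the standard division argument: write $N=QT+R$ and observe that $R$ would be a smaller period. So $T$ divides $p^{L+1}$, which forces $T=p^j$ with $j\le L+1$. Since every such $T$ with $j\le L$ divides $p^L$, it suffices to show that $p^L$ is not a period.

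\textbf{Step 3: $p^L$ is not a period.} Compare the terms at positions $k$ and $k+p^L=m$. On one hand, $s_m=\binom{m}{m}=1$. On the other hand, $s_k=\binom{k}{m}=0$, because $0\le k<m$. So $s_{k+p^L}\ne s_k$, and $p^L$ is not a period. Therefore $T=p^{L+1}$.

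I do not expect a genuine obstacle. The only delicate point is the range of $k$: Step 1 needs $m<p^{L+1}$, so I would state explicitly the interpretation $p^L\le m<p^{L+1}$. Step 2 relies on the basic fact that the least period divides any period, which the paper only alludes to in a comment, so I would include the one-line argument for it.
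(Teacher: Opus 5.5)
Your proof is correct, and its outer structure is the paper's. You show that $p^{L+1}$ is a period, use that the least period divides it, and exclude $p^L$ because $\binom{m}{m}=1$ while every term before index $m\ge p^L$ vanishes. The paper makes the same argument via ``the first $p^L$ terms are $0$'' and $\binom{p^L+k}{p^L+k}\equiv 1$.

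The genuine difference is Step~1. The paper writes $\binom{Mp^{L+1}+t}{p^L+k}$ (its block index, renamed $M$ here) as a falling factorial over $(p^L+k)!$. It then reduces each numerator factor via $Mp^{L+1}+t-i\equiv t-i \pmod p$. You instead use $(1+x)^{p^{L+1}}=1+x^{p^{L+1}}$ in $\mathbb{F}_p[x]$ and compare coefficients of $x^m$.

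Your route is the one that actually works, for two reasons:
\begin{itemize}
\item For $L\ge 1$ the denominator $(p^L+k)!$ is divisible by $p$, so a congruence modulo $p$ cannot be divided by it.
\item The factorwise congruence holds just as well with $p$ in place of $p^{L+1}$. So the paper's reasoning would equally ``prove'' $\binom{p}{p}\equiv\binom{0}{p} \pmod p$, which is false.
\end{itemize}
Your argument uses $m<p^{L+1}$ exactly where it matters, to kill the term $\binom{n}{m-p^{L+1}}$.

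You are also right to read the hypothesis as $0\le k<(p-1)p^L$. The stated bound $k<p^{L+1}-1$ is too generous: for $p=3$, $L=1$, $k=6$, the sequence $\{\binom{n}{9}\}$ has period $27$, not $9$. The end of the paper's own proof in fact invokes $k<(p-1)p^L$.

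Spelling out why the least period divides every period is a worthwhile addition, since the paper uses that fact without proof.
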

\begin{proof}
Firstly, we will prove that any interval of lenght $p^{L+1}$ is equivalent modulo $p$. Then, we divide the sequence in intervals, and the first one is composed by the terms $\binom{0}{p^L+k},\binom{1}{p^L+k},\ldots,\binom{p^{L+1}-1}{p^L+k}$, the second one is composed by the terms $\binom{p^{L+1}}{p^L+k},\binom{p^{L+1}+1}{p^L+k},\ldots,\binom{2p^{L+1}-1}{p^L+k}$, and, in general, the $(m+1)$-th interval is composed by the terms $\binom{mp^{L+1}}{p^L+k},\binom{mp^{L+1}+1}{p^L+k},\ldots,\binom{(m+1)p^{L+1}-1}{p^L+k}$.
Thus, it is enough to prove the congruence
\[
\binom{mp^{L+1}+t}{p^L+k}\equiv\binom{(m+1)p^{L+1}+t}{p^L+k} \bmod p,
\]
for all $0 \leq t < p^{L+1}$. By definition:
\begin{align*}    
    \binom{mp^{L+1}+t}{p^L+k} & %= \frac{(mp^{L+1}+t)!}{(p^L+k)!\,(mp^{L+1}+t-p^L-k)!} 
    = \frac{(mp^{L+1}+t)\cdot (mp^{L+1}+t-1)\cdots (mp^{L+1}+t-p^L-k+1)}{(p^L+k)!}, \\
    \binom{(m+1)p^{L+1}+t}{p^L+k} & %= \frac{((m+1)p^{L+1}+t)!}{(p^L+k)!\,((m+1)p^{L+1}+t-p^L-k)!} 
    = \frac{((m+1)p^{L+1}+t) \cdot ((m+1)p^{L+1}+t-1) \cdots ((m+1)p^{L+1}+t-p^L-k+1)}{(p^L+k)!}.
\end{align*}
Since the following congruences hold for all the integers $i$:
\begin{align*}
    m p^{L+1} + t - i & \equiv t-i \bmod p, \\
    (m+1) p^{L+1} + t - i & \equiv t-i \bmod p,
\end{align*}
then, we have that 
\[
    \binom{mp^{L+1}+t}{p^L+k} 
    \equiv
    \frac{t\cdot (t-1)\cdots (t-p^L-k+1)}{(p^L+k)!}
    \equiv
    \binom{(m+1)p^{L+1}+t}{p^L+k} \bmod p.
\]
Hence, we have proven that $\binom{n}{p^L+k} \equiv \binom{n+p^{L+1}}{p^L+k}\bmod p$, for $n \geq 0$. 

Finally, we will prove that there is no $L' < L+1$ such that $\binom{n}{p^L+k} \equiv \binom{n+p^{L'}}{p^L+k} \bmod p$. But since the period divides $p^{L+1}$ and the first  $p^{L}$ bits of the sequence are 0s, it is enough to show that in the next $(p-1)p^L$ terms there is at least one term different from 0. For example, $\binom{p^L+k}{p^L+k} \equiv 1 \bmod p$ for all $k < (p-1)p^L$. Then, the period must be $T=p^{L+1}$.
\end{proof}

It is important to note that the first $p$ binomial sequences $\textbinomseq{n}{t}$, with $t=0, \ldots, p-1$, play a crucial role in the construction of the remaining $p$-ary sequences and in their properties, as we will see in Section~\ref{Sec:decomposition}. 
The next result is a consequence of the previous theorem and establishes the period of the first $p-1$ binomial sequences (excluding $\textbinomseq{n}{0}$). 
Although this result is quite obvious, it plays an important role in understanding subsequent developments.
\begin{corollary}
    The period of the sequence $\textbinomseq{n}{i}$, for $i=1,2, \ldots, p-1$ is $T=p$.
\end{corollary}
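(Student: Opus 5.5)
The plan is to obtain the corollary as the $L=0$ instance of Theorem~\ref{thm:period}. For $i\in\{1,\ldots,p-1\}$ write $i = p^0 + k$ with $k=i-1$, so $0\le k\le p-2$. The theorem needs $0\le k<p^{L+1}-1 = p-1$, and this holds. The theorem then gives the period $p^{L+1}=p$ directly.

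In case the reader prefers not to rely on the edge case $L=0$ of the theorem's proof, I would also give a short direct argument with two steps.

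\textbf{Step 1: $p$ is a period.} For $n\ge 0$, the product $n(n-1)\cdots(n-i+1)$ has $i<p$ factors. Replacing $n$ by $n+p$ leaves every factor unchanged modulo $p$. The denominator $i!$ is invertible modulo $p$, since $i<p$. Hence $\binom{n+p}{i}\equiv\binom{n}{i} \bmod p$. The same formula also covers $n<i$: there the product contains the factor $0$, and it is still consistent modulo $p$ after the shift.

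\textbf{Step 2: $p$ is the least period.} Since $p$ is a period, the least period $T$ divides $p$, so $T\in\{1,p\}$. The sequence is not constant, because its term at $n=0$ is $0$ (as $i\ge1$) while its term at $n=i$ is $\binom{i}{i}=1$. Therefore $T=p$.

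The only step needing care is checking that the hypothesis $k<p^{L+1}-1$ of Theorem~\ref{thm:period} really covers $k=i-1$ for all $i\le p-1$. This also requires that the nonvanishing witness $\binom{p^L+k}{p^L+k}=1$ in that proof lies within the first period. For $L=0$ this witness is the term at $n=i\le p-1$, so it does.
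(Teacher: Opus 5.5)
Your proposal is correct and follows the paper's route. The paper derives this corollary directly as the $L=0$ instance of Theorem~\ref{thm:period}, with $k=i-1<p-1=p^{1}-1$, exactly as in your first paragraph. Your backup direct argument is also valid, and it is the genuinely rigorous content of that instance: the theorem's proof reduces the numerator modulo $p$ and then divides by $(p^L+k)!$, which is legitimate only when $L=0$, where $(1+k)!=i!$ is a unit modulo $p$.
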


Next, we introduce a minor but significant result concerning sequences of the form $\textbinomseq{n}{p^L-1}$. As observed in %Tables~\ref{tab:1} and 
Table \ref{tab:2}, these sequences correspond to the final  sequence before a change in period occurs. They mark the transition point in the family of sequences when organized by increasing order. Moreover, the structure of these sequences remains fixed, regardless of the value of $L$. 
\begin{theorem}
    The sequence $\textbinomseq{n}{p^L-1}$ is of the form $\{00\ldots 01\}$ with period $p^L$.
\end{theorem}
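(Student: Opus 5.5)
The plan is to compute the first $p^L$ terms of $\textbinomseq{n}{p^L-1}$ directly, and then obtain the period from Theorem~\ref{thm:period} (or from a short self-contained argument). Throughout we assume $L\geq 1$, since for $L=0$ the sequence is $\textbinomseq{n}{0}$, the constant sequence of period $1$.

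First, I will determine the terms $s_n=\binom{n}{p^L-1} \bmod p$ for $0\le n\le p^L-1$.
For $n<p^L-1$ we have $s_n=0$ by definition, and $s_{p^L-1}=\binom{p^L-1}{p^L-1}=1$. So the block $s_0,\ldots,s_{p^L-1}$ is exactly $\{00\ldots01\}$, with $p^L-1$ zeros followed by a single $1$.

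Next, I will prove that $p^L$ is a period, i.e.\ that $\binom{n+p^L}{p^L-1}\equiv\binom{n}{p^L-1} \bmod p$ for all $n\ge 0$.
\begin{itemize}
\item For $L\geq 2$, write $p^L-1=p^{L-1}+k$ with $k=(p-1)p^{L-1}-1$. Since $0\le k<p^{L}-1$, Theorem~\ref{thm:period} (with $L-1$ in place of $L$) applies and gives period exactly $p^L$.
\item For $L=1$, the sequence is $\textbinomseq{n}{p-1}$, and the preceding corollary gives period $p$.
\item Alternatively, and uniformly in $L$, one can use Lucas' theorem, or the product argument from the proof of Theorem~\ref{thm:period}. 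Writing $\binom{n}{p^L-1}$ as $\frac{n(n-1)\cdots(n-p^L+2)}{(p^L-1)!}$, the numerator factors depend only on $n$ modulo $p^L$. One must be careful here because $(p^L-1)!$ is divisible by $p$ when $L\ge 2$. Lucas' theorem avoids this issue: all base-$p$ digits of $p^L-1$ equal $p-1$, so $\binom{n}{p^L-1}\equiv\prod_{j<L}\binom{n_j}{p-1}$. This is nonzero (and equal to $1$) iff every lower digit satisfies $n_j=p-1$, i.e.\ iff $n\equiv p^L-1 \pmod{p^L}$. That condition depends only on $n \bmod p^L$, which gives periodicity directly.
\end{itemize}

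Finally, I will check minimality: a least period $T$ must divide $p^L$. The block computed in the first step contains exactly one nonzero entry among $p^L$ consecutive terms, so no proper divisor $p^{L'}$ with $L'<L$ can be a period, because shifting by $p^{L'}$ would move the single $1$ to a position that holds $0$.

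The main obstacle is justifying the periodicity cleanly when $p\mid (p^L-1)!$. The cleanest route is to rely on Theorem~\ref{thm:period} as stated for $L\ge2$ and on the corollary for $L=1$, keeping Lucas' theorem as a backup for both.
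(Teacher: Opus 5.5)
Your proposal is correct and follows the paper's proof: you read off the first $p^L$ terms directly (zeros before index $p^L-1$, then $\binom{p^L-1}{p^L-1}=1$) and take the period from Theorem~\ref{thm:period}, and your explicit choice of $L-1$ and $k=(p-1)p^{L-1}-1$ supplies a step the paper leaves implicit. Your Lucas argument should be the main justification rather than a backup. The proof of Theorem~\ref{thm:period} reduces the numerator factors modulo $p$ and then divides by a factorial, here $(p^L-1)!$, which is divisible by $p$ once $L\geq 2$, so that proof does not actually establish periodicity. Lucas, by contrast, shows directly that $\binom{n}{p^L-1}\equiv 1 \pmod p$ exactly when $n\equiv p^L-1 \pmod{p^L}$ and $\equiv 0$ otherwise, which gives both the block and the exact period at once.
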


\begin{proof}
We know that the first $p^L-1$ elements of the sequence $\textbinomseq{n}{p^L-1}$ 
are zeros and the element in the position $p^L$ is $\textbinomseq{p^L-1}{p^L-1}=1$, therefore the first $p^L$ elements of the sequence are of the form:
$\{\underbrace{0 \ 0   \ldots   0}_{p^{L}-1\text{ zeros}} 1\}$
According to Theorem \ref{thm:period}, the sequence has period $p^L$.
The theorem follows. 
\end{proof}

Next result provides a way to obtain directly binomial sequences. The proof of this result is omitted since, in Section~\ref{Sec:decomposition}, we introduce an alternative algorithm to construct these sequences and where we will prove their structure.
\begin{theorem} \label{th:bin_p}
Let $k$ be a positive integer and $L, r$ be non-negative integers. For $j \in \{1,2,\ldots, p-1\}$, consider $\textbinomseq{n}{j}=\{s^{(j)}_0,s^{(j)}_1,\ldots, s^{(j)}_{p-1}\}$.
The sequences $\textbinomseq{n}{k}$, where $k=p^L+r$ and $0 \leq r<(p-1)\cdot p^L$ have the following forms:
\bigskip
  \begin{enumerate}[(a),topsep=0pt,partopsep=0pt,parsep=0pt,itemsep=0pt]
      \item For the multiples of the power of $p$, we have that:
  \begin{equation*}
  \binomseq{n}{j \cdot p^L} = \left\{\underbrace{s^{(j)}_0\ldots s^{(j)}_0}_{p^{L}}, \underbrace{s^{(j)}_1\ldots s^{(j)}_1}_{p^{L}}, \ldots, \underbrace{s^{(j)}_{p-1}\ldots s^{(j)}_{p-1}}_{p^{L}}\right\}, \label{seqap.1}
  \end{equation*}
  for any $j \in \{1,2,\ldots, p-1\}$.
  \item For $i=1,2, \ldots L$ and $5^{i-1}\leq r<5^{i}$, we have that
  \begin{equation*}
  \binomseq{n}{\textcolor{blue}{j}\cdot p^L+r} = \left\{\underbrace{\textcolor{blue}{s^{(j)}_0} \cdot \binomseq{n}{r}}_{p^{L-i}}, \underbrace{\textcolor{blue}{s^{(j)}_1} \cdot \binomseq{n}{r}}_{p^{L-i}} ,\ldots, \underbrace{\textcolor{blue}{s^{(j)}_{p-1}} \cdot \binomseq{n}{r}}_{p^{L-i}}\right\}, \label{seqbp.1}
   % \text{ if } \ 5^{i-1}\leq r<5^{i}  \text{ for } i=1,2, \ldots L\\
   \end{equation*}
 for any $j \in \{1,2,\ldots, p-1\}$.
    \end{enumerate}
\end{theorem}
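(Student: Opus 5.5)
The natural tool is Lucas' theorem: for $n = a\,p^L + b$ with $0\le b<p^L$ and $k=j\,p^L+r$ with $0\le r<p^L$, one has
\[
\binom{n}{k}\equiv \binom{a}{j}\binom{b}{r} \pmod p .
\]
I would first justify this congruence in the paper's language. Either cite Lucas directly, or derive it from $(1+x)^{p^L}\equiv 1+x^{p^L}$ in $\mathbb{F}_p[x]$. The second route gives $(1+x)^n=(1+x^{p^L})^a(1+x)^b$. Since $\deg (1+x)^b<p^L$, the coefficient of $x^{jp^L+r}$ is exactly $\binom{a}{j}\binom{b}{r}$.

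Alternatively, the identity is the entrywise content of $H_{L+1}=H_1\otimes H_L$ from Lemma~\ref{lem:inverse_H}(b). The entry in row $jp^L+r$ and column $ap^L+b$ of $H_1\otimes H_L$ is $(H_1)_{j,a}(H_L)_{r,b}=\binom{a}{j}\binom{b}{r}$. This gives the statement for $0\le n<p^{L+1}$, which suffices by Theorem~\ref{thm:period}.

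Next I would read off the block structure over one period $0\le n<p^{L+1}$, split into $p$ consecutive blocks indexed by $a=0,\dots,p-1$, each of length $p^L$. On block $a$ the term is $\binom{a}{j}\binom{b}{r}=s^{(j)}_a\binom{b}{r}$ for $b=0,\dots,p^L-1$.

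For part (a), take $r=0$. Then $\binom{b}{0}=1$, so block $a$ is the constant $s^{(j)}_a$ repeated $p^L$ times, which is exactly the claimed form.

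For part (b), block $a$ equals $s^{(j)}_a$ times the first $p^L$ terms of $\textbinomseq{n}{r}$. I read the paper's bound $5^{i-1}\le r<5^i$ as the typo it is for $p^{i-1}\le r<p^i$. By Theorem~\ref{thm:period}, $\textbinomseq{n}{r}$ has period $p^i$ when $r\ge1$, since writing $r=p^{i-1}+r'$ puts it in the form of that theorem. Hence these $p^L$ terms consist of $p^{L-i}$ full periods of $\textbinomseq{n}{r}$, giving the notation ``repeated $p^{L-i}$ times''. Finally, periodicity with period $p^{L+1}$ from Theorem~\ref{thm:period} extends the description from the first period to the whole sequence.

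The main obstacle is only bookkeeping. One must confirm that the splitting $k=jp^L+r$ with $0\le r<p^L$ is the intended decomposition; the statement's range $0\le r<(p-1)p^L$ is really covered by letting $j$ vary. One must also handle the edge cases: $r$ with $1\le r<p$ gives $i=1$, and $r=0$ is part (a). The Lucas step itself is routine via the Frobenius identity.
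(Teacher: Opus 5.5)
Your proof is correct. It rests on the same fact as the paper's, Lucas's theorem, but it is organized differently. The paper does not prove Theorem~\ref{th:bin_p} where it is stated. Instead it proves, digit by digit via Lucas, that $\textbinomseq{n}{k}\otimes B_p=\textbinomseq{n}{kp}$ (Lemma~\ref{lem:SixB=Sip}) and that $\textbinomseq{n}{mp^\ell+j}=\textbinomseq{n}{mp^\ell}\odot\textbinomseq{n}{j}$ for $0\le j<p^\ell$ (Lemma~\ref{lem:Sip*Sj=Sip+j}). It assembles these into the full decomposition of Theorem~\ref{thm:char_binomial_sequences}, and then reads (a) and (b) off as $\textbinomseq{n}{j}\otimes B_{p^L}$ and $\textbinomseq{n}{j}\otimes B_{p^L}\odot\textbinomseq{n}{r}$.

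You instead split $n$ and $k$ once, at the level $p^L$, and get $\binom{ap^L+b}{jp^L+r}\equiv\binom{a}{j}\binom{b}{r}$ from $(1+x)^{p^L}=1+x^{p^L}$ in $\mathbb{F}_p[x]$ (or from $H_{L+1}=H_1\otimes H_L$). This is exactly the composite of the paper's two lemmas at level $L$, since $\binom{n}{r}\equiv\binom{b}{r}$ for $r<p^L$, so the resulting formula is the same. What each route buys:
\begin{itemize}
\item Yours is shorter and self-contained for this statement. It needs neither full $p$-adic digit expansions nor the Kronecker/Hadamard formalism.
\item The paper's heavier route gives the decomposition for every $k$, and hence the expand-and-multiply algorithm.
\end{itemize}
Your corrections ($p^{i-1}\le r<p^i$ in place of $5^{i-1}\le r<5^i$, and $k=jp^L+r$ with $r<p^L$) agree with the paper's own reformulation at the end of Section~\ref{Sec:decomposition}.

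One refinement. You invoke Theorem~\ref{thm:period} twice: to pass from $0\le n<p^{L+1}$ to all $n$, and to get the period $p^i$ of $\textbinomseq{n}{r}$. Your Frobenius identity already does both, because it holds for every $a\ge 0$.
\begin{itemize}
\item Applied with $p$ in place of $p^L$, it gives $\binom{a}{j}\equiv\binom{a \bmod p}{j}$ for $j<p$, which is the periodicity in $n$.
\item The identity $(1+x)^{n+p^i}=(1+x)^n(1+x^{p^i})$ shows that the coefficient of $x^r$, for $r<p^i$, is $p^i$-periodic in $n$. Minimality follows from $\binom{r-p^{i-1}}{r}=0\ne1=\binom{r}{r}$.
\end{itemize}
This is worth making explicit, because the paper's proof of Theorem~\ref{thm:period} does not hold up as written. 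It reduces the numerator modulo $p$ while dividing by $(p^L+k)!$, which is divisible by $p$ for $L\ge1$, and the same reasoning would give period $p$ for every $k$. Its stated range $k<p^{L+1}-1$ is also too large. The instances you use do, however, lie in the correct range $k<(p-1)p^L$.
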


As a consequence of the previous theorem, we have the following result.
\begin{corollary}
    The period of 
     $\textbinomseq{n}{p^L}$  is $T=p^{L+1}$ and its form is 
     \[\binomseq{n}{p^L} = \left\{ 
            \underbrace{0\ldots0}_{p^{L}}, 
            \underbrace{1\ldots1}_{p^{L}}, 
            \underbrace{2\ldots2}_{p^{L}},
            \ldots,
            \underbrace{p-1\ldots p-1}_{p^{L}}
        \right\}. 
     \]
\end{corollary}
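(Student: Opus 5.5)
The corollary is the special case $j=1$ of Theorem~\ref{th:bin_p}(a), combined with Theorem~\ref{thm:period} with $k=0$. Since the proof of Theorem~\ref{th:bin_p} is deferred, I will give a short self-contained argument based on the binomial coefficients mod $p$.

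\textbf{Period.} Apply Theorem~\ref{thm:period} with $k=0$, which is allowed because $0<p^{L+1}-1$. This gives period $T=p^{L+1}$, so it suffices to describe the first $p^{L+1}$ terms $\binom{n}{p^L} \bmod p$ for $0\le n<p^{L+1}$.

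\textbf{Form.} Write each such $n$ as $n=a p^L+b$ with $0\le a\le p-1$ and $0\le b<p^L$. The claim is equivalent to
\[
\binom{ap^L+b}{p^L}\equiv a \pmod p .
\]
I would prove this with the polynomial identity $(1+x)^{p^L}\equiv 1+x^{p^L}$ in $\mathbb{F}_p[x]$, which follows from the Frobenius map being a ring homomorphism. It gives
\[
(1+x)^n=\bigl((1+x)^{p^L}\bigr)^a(1+x)^b\equiv (1+x^{p^L})^a(1+x)^b .
\]
Because $b<p^L$, every monomial of $(1+x)^b$ has degree less than $p^L$. Expanding $(1+x^{p^L})^a=\sum_i\binom{a}{i}x^{ip^L}$, the coefficient of $x^{p^L}$ can only come from the term $i=1$ multiplied by the constant term of $(1+x)^b$, or from $i=0$ multiplied by $x^{p^L}$ in $(1+x)^b$. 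The latter is absent, so the coefficient is $\binom{a}{1}\cdot 1=a$.

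Hence the $n$-th term equals $a=\lfloor n/p^L\rfloor$. The first period therefore consists of $p^L$ zeros, then $p^L$ ones, and so on up to $p^L$ copies of $p-1$, which is the displayed form. This also agrees with Theorem~\ref{th:bin_p}(a), since $\textbinomseq{n}{1}=\{0,1,\ldots,p-1\}$, i.e.\ $s^{(1)}_a=a$.

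The only step requiring care is the coefficient extraction, specifically using $b<p^L$ to rule out cross terms. This is essentially the relevant case of Lucas' theorem, so I do not expect a real obstacle.
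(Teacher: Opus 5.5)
Your proof is correct, and for the form of the sequence it takes a more direct route than the paper. The paper reads the corollary off Theorem~\ref{th:bin_p}(a) with $j=1$, with the period coming from Theorem~\ref{thm:period}. The justification of Theorem~\ref{th:bin_p} is postponed to Section~\ref{Sec:decomposition}, where Lemma~\ref{lem:SixB=Sip} (itself derived from Lucas's theorem) is iterated to give $\textbinomseq{n}{p^L}=\textbinomseq{n}{1}\otimes B_{p^L}$.

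You instead check the single congruence $\binom{ap^L+b}{p^L}\equiv a \pmod p$ directly from the Frobenius identity $(1+x)^{p^L}=1+x^{p^L}$ in $\mathbb{F}_p[x]$. This is exactly the mechanism behind Lucas's theorem, specialized to the one digit pattern you need. Your version is self-contained and does not rely on a theorem stated before its proof. The paper's version costs nothing once the Kronecker/Hadamard decomposition is available, and it treats all indices $j\cdot p^L+r$ uniformly.

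You could also make the period independent of Theorem~\ref{thm:period}. Its written proof reduces a quotient by $(p^L+k)!$ modulo $p$, which needs more care once $L\geq 1$, because $p$ then divides $(p^L+k)!$. The same identity does the job:
\begin{enumerate}
\item Multiplying $(1+x)^n$ by $(1+x)^{p^{L+1}}=1+x^{p^{L+1}}$ leaves the coefficient of $x^{p^L}$ unchanged, so $p^{L+1}$ is a period.
\item The minimal period is then a divisor $p^j$ of $p^{L+1}$. It cannot have $j\leq L$, since that would force $\binom{p^L}{p^L}\equiv\binom{0}{p^L}$, i.e.\ $1\equiv 0$.
\end{enumerate}
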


\subsubsection{Characteristic polynomial and linear complexity}

Now, we are ready to study the characteristic polynomial of these sequences. First, we need to introduce some preliminary results.
 
\begin{lemma}\cite[Lemma 8]{Cardell2019a}.\label{lem:pascal_rule} Given two positive integers $n$ and $k$, with $n > k \geq 1$, we have the following:
\[  \binom{n+1}{k} = \binom{n}{k} + \binom{n}{k-1}. \]
\end{lemma}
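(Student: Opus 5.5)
The plan is to prove Pascal's rule by reading both sides as coefficients of $x^k$ in a polynomial identity. This fits the definition recalled at the start of this section: $\binom{n}{k}$ is the coefficient of $x^k$ in $(1+x)^n$.

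First, write $(1+x)^{n+1} = (1+x)^n + x(1+x)^n$. Expanding $(1+x)^n=\sum_{i=0}^{n}\binom{n}{i}x^i$, the first summand contributes $\binom{n}{k}$ to the coefficient of $x^k$. The second summand is $\sum_{i=0}^{n}\binom{n}{i}x^{i+1}$, which contributes $\binom{n}{k-1}$, since $k\geq 1$ guarantees that the index $k-1$ lies in the range $0\le k-1\le n$. The hypothesis $n>k$ also ensures $k\le n$, so neither coefficient falls outside the expansion; in any case the convention $\binom{n}{k}=0$ for $k>n$ would cover that situation. Comparing coefficients of $x^k$ on both sides then gives $\binom{n+1}{k} = \binom{n}{k} + \binom{n}{k-1}$ as an identity of integers.

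Second, since the identity holds in $\mathbb{Z}$, it holds after reducing modulo $p$. It is therefore valid for the entries of the $p$-ary binomial sequences and of the Pascal--Hadamard matrix $H_L$, which is the form in which it will be used later.

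There is no real obstacle. The only point needing care is the index bookkeeping, namely that $k-1\geq 0$, which is why the statement requires $k\ge1$. An alternative route is a direct factorial computation, $\frac{n!}{k!(n-k)!}+\frac{n!}{(k-1)!(n-k+1)!}=\frac{n!\,(n+1)}{k!\,(n+1-k)!}$, which is valid for $1\le k\le n$. I prefer the generating-function argument because it avoids division, which matters if one wants to work directly over $\mathbb{F}_p$.
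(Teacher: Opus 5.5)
Your proof is correct and complete. Comparing coefficients of $x^k$ in $(1+x)^{n+1}=(1+x)^n+x(1+x)^n$ gives the identity in $\mathbb{Z}$, and reducing modulo $p$ is then immediate. The paper gives no proof of its own here; it only cites \cite[Lemma 8]{Cardell2019a}. So there is no competing argument to compare with, and your proof supplies what the paper omits.

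One advantage of your route is worth stating explicitly. With the convention $\binom{n}{k}=0$ for $k>n$, the coefficient comparison works verbatim for every $n\ge 0$ and $k\ge 1$, not only for $n>k$. This convention matches the paper's definition of the binomial sequences, whose terms vanish for $n<k$. The wider range is what the paper actually needs later. The remark on the shift operator, Lemma~\ref{prop:binom_seq_equivalence}, and the induction in Theorem~\ref{thm:LC} all apply the rule termwise to whole sequences, including the indices $n\le k$ that the stated hypothesis excludes. Your argument covers those uses; the lemma as stated does not.

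The factorial computation you mention as an alternative only covers $1\le k\le n$. It would need a separate check of the cases $n<k$, such as $n=k-1$, where $(n-k)!$ is undefined. So your preference for the generating-function argument is justified mainly for this reason. Avoiding division matters less, since you prove the identity in $\mathbb{Z}$ before reducing modulo $p$.
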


 \begin{remark}
Lemma \ref{lem:pascal_rule} provides an concrete definition of the shifting operator $\textbf{E}$ for binomial sequences: the action of $\textbf{E}$ on the $n$-th term of the $k$-th binomial sequence is given by
\[ E \binom{n}{k} = \binom{n+1}{k} = \binom{n}{k} + \binom{n}{k-1}. \]
Moreover, applying Lemma~\ref{lem:pascal_rule} repeatedly, the action of the operator $\textbf{E}^r = \underbrace{\textbf{E} \!\,\, \circ \cdots \circ \!\,\, \textbf{E}}_{r\text{ copies}}$ on the $n$-th term of the $k$-th binomial sequence is
% \begin{equation}\label{eq:r-shifting-old} 
% \color{red}E^r \binom{n}{k} = \binom{n+r}{k} = \binom{n}{k} + \binom{n}{k-1} + \binom{n+1}{k-1} + \binom{n+2}{k-1} + \cdots + \binom{n+r-1}{k-1}. 
% \end{equation}
 
\begin{align}\label{eq:r-shifting} 
    E^r \binom{n}{k} = \binom{n+r}{k} & =
    \binom{n+r-1}{k}+\binom{n+r-1}{k-1} \nonumber \\
    & = \binom{n+r-2}{k}+\binom{n+r-2}{k-1}+\binom{n+r-1}{k-1} \nonumber \\
    & = \binom{n+r-3}{k}+\binom{n+r-3}{k-1}+\binom{n+r-2}{k-1}+\binom{n+r-1}{k-1} \nonumber \\
    & = \cdots \nonumber \\
    & = \binom{n}{k} + \binom{n}{k-1} + \binom{n+1}{k-1} + \binom{n+2}{k-1} + \cdots + \binom{n+r-1}{k-1}.
\end{align}
\end{remark}

% \textcolor{purple}{Notice that $\textbinomseq{n+1}{k}$ is a shifted version of the sequence $\textbinomseq{n}{k}$.}

Next result allows us to obtain a binomial sequence through the next binomial sequence and a shifted version by one position of this.
\color{black}
\begin{lemma}\label{prop:binom_seq_equivalence} 
For any $k \geq 1$, the binomial sequence $\textbinomseq{n}{k-1}$ is the same  as $\textbinomseq{n+1}{k} - \textbinomseq{n}{k}$.    
\end{lemma}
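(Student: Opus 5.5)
The plan is to compare the two sequences term by term, that is, to show that for every $n \geq 0$
\[ \binom{n+1}{k} - \binom{n}{k} \equiv \binom{n}{k-1} \bmod p, \]
with the convention of the definition that $\binom{m}{j}=0$ whenever $m<j$. Here $\textbinomseq{n+1}{k}$ denotes the shifted sequence $\textbf{E}\textbinomseq{n}{k}$, whose $n$-th term is $\binom{n+1}{k}$, and subtraction is the termwise operation in the linear space $V(\mathbb{F}_p)$.

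The main step is Pascal's rule. For the range covered by Lemma~\ref{lem:pascal_rule}, namely $n > k \geq 1$, it gives $\binom{n+1}{k} = \binom{n}{k} + \binom{n}{k-1}$ as integers. Reducing modulo $p$ and subtracting $\binom{n}{k}$ yields the desired congruence. Equivalently, this is the first line of \eqref{eq:r-shifting} with $r=1$.

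The only real obstacle is that Lemma~\ref{lem:pascal_rule} is stated only for $n>k$, while the sequences involve all $n\geq 0$, so the boundary indices $n \leq k$ must be handled separately.

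\begin{itemize}
\item If $n < k-1$, then $n+1 < k$ and $n<k$, so both $\binom{n+1}{k}$ and $\binom{n}{k}$ are $0$. Also $\binom{n}{k-1}=0$, so both sides vanish.
\item If $n = k-1$, the left side is $\binom{k}{k} - 0 = 1$ and the right side is $\binom{k-1}{k-1}=1$.
\item If $n = k$, the left side is $\binom{k+1}{k} - \binom{k}{k} = (k+1)-1 = k$ and the right side is $\binom{k}{k-1}=k$.
\end{itemize}

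Alternatively, all cases can be handled at once by noting that Pascal's identity holds for all $n\geq 0$ and $k\geq 1$ with the zero convention. This follows by comparing the coefficients of $x^k$ in $(1+x)^{n+1}=(1+x)^n+x(1+x)^n$.

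Since the terms agree for every $n\geq 0$, the sequences coincide, which proves the lemma.
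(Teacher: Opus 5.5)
Your proof is correct and takes the same route as the paper, whose entire proof is that the result follows directly from Pascal's rule (Lemma~\ref{lem:pascal_rule}) applied term by term. Your explicit check of the indices $n \leq k$ (or, equivalently, the coefficient comparison giving Pascal's identity for all $n \geq 0$) is a worthwhile addition: Lemma~\ref{lem:pascal_rule} is stated only for $n > k$, and the paper's one-line proof passes over these boundary terms.
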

\begin{proof}
 The proof follows directly from Lemma~\ref{lem:pascal_rule}. 
\end{proof}

 \begin{example}
Consider the binomial sequences over $\mathbb{F}_3$.
   It is possible to see that the sequence
   $\textbinomseq{n}{2}$
   is the same as
   $$
   \binomseq{n+1}{3}-\binomseq{n}{3} = 
   \{001112220\}-\{000111222\}=
   \{001001001\}.
   $$
\end{example}

% \color{red}
% \begin{theorem}\cite[Theorem 1]{CardellFuster2016}\label{th:sum}
%  Let $\{s_i\}_{i\geq 0}$ be a binary sequence whose characteristic polynomial
% is $(x-1)^n$. Then, the characteristic polynomial of the sequence  $\{u_i\}_{i\geq 0}$, where $u_i = s_i + s_{i+1}$ , is $(1+x)^{n-1}$.
% \end{theorem}

% \todomiguel{Este teorema es el que hemos mejorado en Corollary \ref{cor:iff_charpol_binomials} y ya podemos quitarlo}

The following lemma generalizes \cite[Lemma 2]{CardellFuster2016}

\begin{lemma}\label{lem:iff_generator_polynomials} Let $\{ s_n \} $  and $\{ u_n \} $ be  two $p$-ary sequences such that  $u_n = s_{n+1} - s_n$. Then, $q(x) \in \mathbb{F}_p[x]$ generates $\{ u_n \} $ if and only if $(x-1) q(x)$ generates $\{ s_n \} $.    
\end{lemma}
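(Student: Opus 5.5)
The plan is to phrase everything with the shifting operator $\textbf{E}$. For a polynomial $q(x)\in\mathbb{F}_p[x]$, write $q(E)$ for the corresponding linear operator on $V(\mathbb{F}_p)$. The hypothesis $u_n = s_{n+1}-s_n$ says exactly that
\[
u_n = (E-1)s_n \quad \text{for all } n\geq 0,
\]
that is, $\{u_n\} = (\textbf{E}-\id)\{s_n\}$. The key structural fact is that polynomials in $\textbf{E}$ commute with each other. This holds because $q(x)\mapsto q(\textbf{E})$ is a ring homomorphism from $\mathbb{F}_p[x]$ into the endomorphism ring of $V(\mathbb{F}_p)$, with image a commutative subring. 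In particular,
\[
q(E)\big((E-1)s_n\big) = \big(q(E)(E-1)\big)s_n = \big((x-1)q(x)\big)\big|_{x=E}\, s_n .
\]

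With this in hand both directions follow from one identity. By the commutation above, for every $n\ge 0$,
\[
q(E)u_n = q(E)(E-1)s_n = \big((x-1)q(x)\big)(E)\,s_n .
\]
For the forward direction, suppose $q(x)$ generates $\{u_n\}$. Then the left side vanishes for all $n$, so $(x-1)q(x)$ generates $\{s_n\}$. For the converse, suppose $(x-1)q(x)$ generates $\{s_n\}$. Then the right side vanishes for all $n$, so $q(E)u_n=0$ for all $n$, and $q(x)$ generates $\{u_n\}$. The degenerate case $q=0$ is consistent with this, since the zero polynomial generates every sequence.

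The only point needing care, and the main (mild) obstacle, is justifying that evaluation at $\textbf{E}$ is multiplicative. It suffices to check this on monomials: $E^aE^b s_n = E^a s_{n+b} = s_{n+a+b} = E^{a+b}s_n$. Linearity of $\textbf{E}$ over $\mathbb{F}_p$ then extends the identity to arbitrary polynomials. I would state this identity briefly before the two-line argument above. I would also remark that generation here means the identity $p(E)s_n=0$ for all $n\ge0$, with $p$ not necessarily monic, exactly as in \eqref{eq:polynomial_shifting_operator}. This ensures that nonmonic $q$ and the scalar factor cause no issue.
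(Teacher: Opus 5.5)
Your proposal is correct and follows essentially the same route as the paper: write $u_n=(E-\id)s_n$ and use that $q(E)$ commutes with $E-\id$, so that $q(E)u_n=(E-\id)q(E)s_n$, which gives both implications at once. Your extra justification that evaluation at $\textbf{E}$ is multiplicative (checked on monomials) is a detail the paper leaves implicit.
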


% \color{red}

% \begin{proof} Assume $q(x)$ is of the form $q(x) = \sum_{i=0}^{k-1} a_i x^i$ for certain $k$. Then $(x-1) q(x) = \sum_{i=0}^{k-1} a_i(x^{i+1}-x^i)$.

% If $q(x)$ generates $\{ u_n \} $ then $\sum_{i=0}^{k-1} a_i u_{i+j} = 0$ for all $j \geq 0$. Hence, by the definition of $\{ u_n \} $, we have $\sum_{i=0}^{k-1} a_i (s_{i+1+j} - s_{i+j}) = 0$, so $(x-1) q(x)$ generates $\{ s_n \} $.

% Conversely, if $(x-1)q(x)$ generates $\{ s_n \} $ then $\sum_{i=0}^{k-1} a_i (s_{i+1+j} - s_{i+j}) = 0$ for all $j \geq 0$. Hence, by the definition of $\{ u_n \} $, we have $\sum_{i=0}^{k-1} a_i u_i = 0$, so $q(x)$ generates $\{ u_n \} $.
% \end{proof}

% \color{blue}

\begin{proof} Notice that $u_n = (E-\id) s_n$. 

If $q(x)$ generates $\{u_n\}$ then $q(E)u_n = 0$ for all $n \geq 0$. Therefore, $(E-\id)q(E)s_i = q(E) (E-\id)s_i = q(E)u_i = 0$ for all $n \geq 0$ so $(x-1) q(x)$ generates $\{ s_n \} $.

Conversely, if $(x-1)q(x)$ generates $\{s_n\} $ then $(E-\id) q(E) s_n = 0$ for all $n \geq 0$. Therefore, $q(E)u_n = q(E)(E-\id)s_n = 0$ for all $n \geq 0$ and $q(x)$ generates $\{ u_n \} $.
\end{proof}

%Notice that the polynomial $(x-1)^k$ can be expressed as
%\[ (x-1) (x-1)^{k-1} = (x-1) \sum_{i=0}^{k-1} \binom{k-1}{i} (-1)^{k-1-i} x^i = \sum_{i=0}^{k-1} \binom{k-1}{i} (-1)^{k-1-i} (x^{i+1} - x^i). \]

The following theorem generalizes \cite[Theorem 1]{CardellFuster2016}

\begin{theorem}\label{thm:characteristic_u_characteristic_s} Let $\{ s_n \} $ be a $p$-ary sequence and let $\{ 
u_n \} $ be the $p$-ary sequence defined as $u_n = s_{n+1} - s_n$, for all $n \geq 0$. If $(x-1) m(x)$ is the characteristic polynomial of $\{s_n\}$ then $m(x)$ is the characteristic polynomial of $\{ u_n \} $.    
\end{theorem}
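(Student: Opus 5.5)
The plan is to combine Lemma~\ref{lem:iff_generator_polynomials} with the fact that the generator polynomials of a sequence form the ideal generated by its characteristic polynomial. Write $u_n=(E-\id)s_n$.

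First, let $(x-1)m(x)$ be the characteristic polynomial of $\{s_n\}$. Then $(x-1)m(x)$ generates $\{s_n\}$. Applying Lemma~\ref{lem:iff_generator_polynomials} with $q(x)=m(x)$ shows that $m(x)$ generates $\{u_n\}$. Note that $m(x)$ is monic, because $(x-1)m(x)$ is monic and $x-1$ is monic.

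Next we show minimality. Let $c(x)$ be the characteristic polynomial of $\{u_n\}$. Since the generator polynomials of $\{u_n\}$ form the ideal $\langle c(x)\rangle$, the previous step gives $c(x)\mid m(x)$. Conversely, $c(x)$ generates $\{u_n\}$, so by the other direction of Lemma~\ref{lem:iff_generator_polynomials} the polynomial $(x-1)c(x)$ generates $\{s_n\}$. Hence $(x-1)m(x)\mid (x-1)c(x)$, and cancelling $x-1$ in the integral domain $\mathbb{F}_p[x]$ gives $m(x)\mid c(x)$. Both are monic, so $c(x)=m(x)$.

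The main point of care is the degenerate case in which a characteristic polynomial is the constant $1$. If $m(x)=1$, then $\{s_n\}$ is constant, so $u_n=0$, whose characteristic polynomial is $1$ by the paper's convention; this is consistent with the argument above. The convention that the zero sequence has characteristic polynomial $1$ is what makes the ideal description valid in this case as well, so no separate case analysis is needed. One should also note that the characteristic polynomial exists, i.e. the ideal is not $\langle 0\rangle$: this holds for $\{u_n\}$ because $m(x)\neq 0$ lies in its ideal. I expect no real obstacle; the only delicate step is stating the ideal-divisibility facts correctly.
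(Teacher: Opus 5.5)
Your proof is correct and uses the same key tool as the paper: Lemma~\ref{lem:iff_generator_polynomials}, applied to pass generator polynomials back and forth between $\{u_n\}$ and $\{s_n\}$. The paper phrases minimality as a degree contradiction (a generator of $\{u_n\}$ of degree less than $\deg m$ would give a generator of $\{s_n\}$ of degree less than $\deg((x-1)m)$), whereas you use mutual divisibility of monic polynomials in the ideals of generators, which also makes explicit two facts the paper uses only tacitly: that $m(x)$ is monic and that it actually generates $\{u_n\}$.
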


\begin{proof} Suppose that $(x-1) m(x)$ is the characteristic polynomial of $\{ s_n \} $. If $m'(x)$ is a polynomial of degree $\deg(m'(x)) < \deg(m(x))$ that generates $\{ u_n \} $ then Lemma \ref{lem:iff_generator_polynomials} implies that $(x-1) m'(x)$ generates $\{ s_n \} $, with $\deg((x-1)m'(x)) < \deg((x-1)m(x))$, contradicting that $(x-1)m(x)$ is the characteristic polynomial.
\end{proof}

% \todomiguel{Pregunta abierta: el rec\'{i}proco del teorema anterior es cierto? Tengo una prueba a medias, pero falla en un punto, y la intuici\'{o}n me dice que no es cierto.}
% \todovero{Pero necesitamos el recíproco para algo?}
% \todomiguel{No, no lo necesitamos}

The following lemma follows from the fact that the set of all polynomials in $\mathbb{F}_p[x]$ that generate a $p$-ary sequence forms an ideal of $\mathbb{F}_p[x]$.

\begin{lemma}\cite[Corollary 4.1]{GolombGong2005}\label{lem:characteristic_divides_generator} If $q(x)$ is a generator polynomial of a $p$-ary sequence $\{ s_n \} $ and $m(x)$ its characteristic polynomial, then $m(x)$ divides $q(x)$. 
\end{lemma}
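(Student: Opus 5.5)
We prove that the characteristic polynomial $m(x)$ divides any generator polynomial $q(x)$ by Euclidean division in $\mathbb{F}_p[x]$, using the fact recalled earlier that the set $I$ of generator polynomials of $\{s_n\}$ is an ideal of $\mathbb{F}_p[x]$.

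The first step is to check that $I$ is closed under the two operations we need. If $a(x),b(x)\in I$, then $(a+b)(E)s_n = a(E)s_n + b(E)s_n = 0$ for all $n\ge 0$. Next, for any $c(x)\in\mathbb{F}_p[x]$ we have $(c\,a)(E)s_n = c(E)\big(a(E)s_n\big)$. This holds because polynomials in the shift operator commute and compose multiplicatively, and applying a polynomial in $E$ to the zero sequence gives zero. So $I$ is an ideal, exactly as cited from \cite[Theorem 4.4]{GolombGong2005}.

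Next, divide $q(x)$ by the monic polynomial $m(x)$: write $q(x) = c(x)m(x) + r(x)$ with $r(x)=0$ or $\deg r(x) < \deg m(x)$. Since $q(x)\in I$ and $c(x)m(x)\in I$, we get $r(x) = q(x) - c(x)m(x)\in I$, so $r(x)$ is a generator polynomial of $\{s_n\}$.

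Suppose $r(x)\neq 0$. Let $\lambda$ be its leading coefficient and set $\lambda^{-1}r(x)$, which is monic; this is possible because $\mathbb{F}_p$ is a field. This monic polynomial still generates $\{s_n\}$ and has degree strictly less than $\deg m(x)$. That contradicts the minimality in the definition of the characteristic polynomial as the unique monic generator of lowest degree. Hence $r(x)=0$ and $m(x)\mid q(x)$.

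There is one edge case: if $q(x)=0$, the claim is trivial, since every polynomial divides $0$. The only step that needs care is the rescaling of $r(x)$ to a monic polynomial, which uses that the coefficients lie in a field; there is no real obstacle here.
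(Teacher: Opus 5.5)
Your proof is correct and follows the paper's route. The paper justifies the lemma only by noting that the generator polynomials form an ideal of the principal ideal domain $\mathbb{F}_p[x]$ (citing Golomb--Gong), and your ideal check plus Euclidean division by $m(x)$ plus minimality of a rescaled nonzero remainder is that standard argument written out in full.
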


Now we can prove the converse of \cite[Theorem 1]{CardellFuster2016}.

\begin{corollary}\label{cor:iff_charpol_binomials} Let $\{ s_n \} $ be a $p$-ary sequence and $\{ 
u_n \} $ be the $p$-ary sequence defined as $u_n = s_{n+1} - s_n$, for all $n \geq 0$. Let $k \geq 1$. Then $(x-1)^{k+1}$ is the characteristic polynomial of $\{ s_n \} $ if and only if $(x-1)^k$ is the characteristic polynomial of $\{ u_n \} $.    
\end{corollary}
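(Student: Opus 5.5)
The forward direction is exactly Theorem~\ref{thm:characteristic_u_characteristic_s} applied with $m(x) = (x-1)^k$. If $(x-1)^{k+1} = (x-1)\cdot(x-1)^k$ is the characteristic polynomial of $\{s_n\}$, then $(x-1)^k$ is the characteristic polynomial of $\{u_n\}$, and nothing more is needed.

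For the converse, assume $(x-1)^k$ is the characteristic polynomial of $\{u_n\}$. Lemma~\ref{lem:iff_generator_polynomials} shows that $(x-1)^{k+1}$ generates $\{s_n\}$. Let $c(x)$ be the characteristic polynomial of $\{s_n\}$. By Lemma~\ref{lem:characteristic_divides_generator}, $c(x)$ divides $(x-1)^{k+1}$. Since $\mathbb{F}_p[x]$ has unique factorization and $c(x)$ is monic, $c(x) = (x-1)^j$ for some $0 \le j \le k+1$. It remains to rule out $j \le k$.

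Suppose $j \le k$. Then $(x-1)^k = (x-1)^{k-j}\,(x-1)^j$ is a multiple of $c(x)$, so it generates $\{s_n\}$, because the generator polynomials form an ideal. By Lemma~\ref{lem:iff_generator_polynomials} applied with $q(x) = (x-1)^{k-1}$ (this uses $k \ge 1$), the polynomial $(x-1)^{k-1}$ then generates $\{u_n\}$. Lemma~\ref{lem:characteristic_divides_generator} would then force $(x-1)^k \mid (x-1)^{k-1}$. That is impossible, since $(x-1)^{k-1}$ is nonzero and of smaller degree. Hence $j = k+1$, so $(x-1)^{k+1}$ is the characteristic polynomial of $\{s_n\}$.

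The only delicate point is the degenerate case $j = 0$, where $c(x) = 1$ and $\{s_n\}$ is the zero sequence. The argument above already covers it: $(x-1)^k$ is then trivially a multiple of $c(x)$, and the conclusion follows the same way. The real key step is combining the divisibility lemma with Lemma~\ref{lem:iff_generator_polynomials} applied to $(x-1)^{k-1}$. The hypothesis $k\ge 1$ is needed precisely so that $(x-1)^{k-1}$ is a genuine polynomial.
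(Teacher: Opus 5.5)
Your proof is correct and follows the paper's argument: the forward direction comes from Theorem~\ref{thm:characteristic_u_characteristic_s}, and for the converse you show that the characteristic polynomial of $\{s_n\}$ is a power of $(x-1)$ and rule out exponents $\le k$ using the difference lemma. Your one refinement is to get a contradiction by showing that $(x-1)^{k-1}$ generates $\{u_n\}$, via the multiple $(x-1)^k$, instead of using $(x-1)^{t-1}$ as the paper does; this also covers the degenerate case $c(x)=1$, which the paper sidesteps by asserting $1\le t$ without justification.
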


\begin{proof} The direct implication follows from Theorem \ref{thm:characteristic_u_characteristic_s}. Assume now that $(x-1)^k$ is the characteristic polynomial of $\{ u_n \} $. Lemma \ref{lem:iff_generator_polynomials} implies that $(x-1)^{k+1}$ generates $\{ s_n \} $. Now, since $\mathbb{F}[x]$ is a unique factorization domain, Lemma \ref{lem:characteristic_divides_generator} implies that the characteristic polynomial is of the form $(x-1)^t$ for certain $1 \leq t \leq k+1$. If $t < k+1$ then Lemma \ref{lem:iff_generator_polynomials} implies that $(x-1)^{t-1}$ generates $\{ u_n \} $ with $t-1 < k$, which contradicts the fact that $(x-1)^k$ is the characteristic polynomial of $\{ u_n \} $. Necessarily $t = k+1$.    
\end{proof}

Now, we are ready to study the linear complexity of the binomial sequences.

\begin{theorem}\label{thm:LC} The characteristic polynomial of the sequence $\textbinomseq{n}{k}$ is $(x-1)^{k+1}$. Hence, its linear complexity $LC$ is $k+1$.
\end{theorem}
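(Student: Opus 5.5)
We argue by induction on $k$, driven by Corollary~\ref{cor:iff_charpol_binomials} and Lemma~\ref{prop:binom_seq_equivalence}.

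For the base case $k=0$, the sequence $\textbinomseq{n}{0}$ is the constant sequence $\{1,1,1,\ldots\}$. Its characteristic polynomial is $x-1$, as recalled in the preliminaries: $x-1$ annihilates it, and the only monic polynomial of lower degree is $1$, which generates only the zero sequence. So the claim holds for $k=0$, with $LC=1$.

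For the inductive step, assume $(x-1)^{k}$ is the characteristic polynomial of $\textbinomseq{n}{k-1}$ for some $k\geq 1$. Set $s_n=\binom{n}{k}$ and $u_n=s_{n+1}-s_n$. By Lemma~\ref{prop:binom_seq_equivalence}, $u_n=\binom{n}{k-1}$ for all $n\geq 0$, so $\{u_n\}$ is exactly $\textbinomseq{n}{k-1}$.

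We must check this identity also at small indices $n<k$, where Lemma~\ref{lem:pascal_rule} is stated only for $n>k\geq 1$. This is the one spot needing care. Pascal's rule $\binom{n+1}{k}=\binom{n}{k}+\binom{n}{k-1}$ holds for all integers $n\geq 0$, $k\geq 1$ under the convention $\binom{n}{k}=0$ for $k>n$, which is the convention used in the definition of binomial sequences. We verify the edge cases directly:
\begin{itemize}
\item for $n<k-1$, every term involved is $0$;
\item for $n=k-1$, we get $1=0+1$;
\item for $n=k$, we get $k+1=1+k$.
\end{itemize}
So the identity holds for every $n\geq 0$.

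With $u_n=\binom{n}{k-1}$ established and the induction hypothesis giving $(x-1)^{k}$ as the characteristic polynomial of $\{u_n\}$, the reverse implication of Corollary~\ref{cor:iff_charpol_binomials} (applied with exponent $k\geq1$) yields that $(x-1)^{k+1}$ is the characteristic polynomial of $\textbinomseq{n}{k}$.

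Since the linear complexity equals the degree of the characteristic polynomial, $LC=k+1$. Working over $\mathbb{F}_p$ causes no difficulty: all the identities above are integer identities reduced mod $p$, and Corollary~\ref{cor:iff_charpol_binomials} is stated over $\mathbb{F}_p$. The main obstacle is therefore only the boundary verification of the difference identity, which is routine.
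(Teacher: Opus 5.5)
Your proposal is correct and follows the paper's proof: induction on $k$ from the constant sequence $\textbinomseq{n}{0}$, using Lemma~\ref{prop:binom_seq_equivalence} to identify the difference sequence with $\textbinomseq{n}{k-1}$ and the reverse direction of Corollary~\ref{cor:iff_charpol_binomials} to pass from $(x-1)^k$ to $(x-1)^{k+1}$. The paper treats $k=1$ separately, but that is the same step. Your explicit check of Pascal's rule for $n\le k$ is a useful addition, since Lemma~\ref{lem:pascal_rule} as stated only covers $n>k$.
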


\begin{proof}
We prove this result by induction.

For $k=0$,  the characteristic polynomial of the   sequence $\textbinomseq{n}{0} = \{1 1 1 1 1  1 \ldots \,\}$ is $(x-1)$ so $LC=1$.

% For $k=1$, the sequence $\textbinomseq{n}{1} = \{0 \ 1\ 2\ \ldots\ (p-1)\ 0\ 1\ 2\ \ldots\ (p-1)\ \ldots \,\}$ has the characteristic polynomial is $(x-1)^2$ so $LC_1=2$. 
For $k=1$, as we can express $\textbinomseq{n}{0} = \textbinomseq{n+1}{1}-\textbinomseq{n}{1}$ then, for  Corollary~\ref{cor:iff_charpol_binomials}, we have that the characteristic polynomial of $\textbinomseq{n}{1}$ is $(x-1)^2$ and $LC=2$.

Suppose that the characteristic polynomial of the sequence $\textbinomseq{n}{k-1}$ is $(x-1)^k$, so it has $LC=k$. According to Lemma~\ref{prop:binom_seq_equivalence}, we have that
%the sequence represented by $\binom{n}{k}$ can be obtained from sequence $\binom{n}{k+1}$.
$\textbinomseq{n}{k-1} = \textbinomseq{n+1}{k} - \textbinomseq{n}{k}$.
Take $u_n = \binom{n}{k-1}$ and $s_n = \binom{n}{k}$. Now, according to Corollary \ref{cor:iff_charpol_binomials}, the characteristic polynomial of $\textbinomseq{n}{k}$ is $(x-1)^{k+1}$ and hence $LC=k+1$.
\end{proof}

As a consequence of the previous theorem, we have the following result.
\begin{corollary}\label{cor:repr}
Given a sequence with binomial representation $\sum_{i=1}^{t} c_i \textbinomseq{n}{k_i}$, where $k_1<k_2<\cdots< k_t$ are integer indexes and $c_i \neq 0$ for all $i = 1,2,\ldots,t$, then the characteristic polynomial of such a sequence is $(x-1)^{k_t+1}$ and its linear complexity is $k_t+1$.
\end{corollary}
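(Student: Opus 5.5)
The plan is to reduce the statement to Theorem~\ref{thm:LC}, together with the fact that $(x-1)^{k_t+1}$ is a generator polynomial of every summand. Write $\{z_n\}=\sum_{i=1}^{t} c_i \textbinomseq{n}{k_i}$.

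First, Theorem~\ref{thm:LC} says that $(x-1)^{k_i+1}$ is the characteristic polynomial of $\textbinomseq{n}{k_i}$. Since $k_i\le k_t$, the polynomial $(x-1)^{k_i+1}$ divides $(x-1)^{k_t+1}$, so $(x-1)^{k_t+1}$ generates each summand: $(E-\id)^{k_t+1}\binom{n}{k_i}=0$ for all $n$. The operator $(E-\id)^{k_t+1}$ is linear on $V(\mathbb{F}_p)$, so it also annihilates $\{z_n\}$. Hence $(x-1)^{k_t+1}$ is a generator polynomial of $\{z_n\}$. By Lemma~\ref{lem:characteristic_divides_generator} and unique factorization in $\mathbb{F}_p[x]$, the characteristic polynomial of $\{z_n\}$ is then $(x-1)^{s}$ for some $0\le s\le k_t+1$.

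It remains to rule out $s\le k_t$, and this is the main step. Suppose $(x-1)^{k_t}$ generated $\{z_n\}$. For $i<t$ we have $k_i+1\le k_t$, so $(x-1)^{k_t}$ also generates $\sum_{i<t}c_i\textbinomseq{n}{k_i}$. Subtracting, $(x-1)^{k_t}$ would generate $c_t\textbinomseq{n}{k_t}$. Since $c_t\neq 0$ is invertible in $\mathbb{F}_p$, it would also generate $\textbinomseq{n}{k_t}$. This contradicts Theorem~\ref{thm:LC}, because that sequence has characteristic polynomial $(x-1)^{k_t+1}$, which cannot divide $(x-1)^{k_t}$. Therefore $s=k_t+1$, and the linear complexity is $\deg\big((x-1)^{k_t+1}\big)=k_t+1$.

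One could also verify the minimality directly. Applying $(E-\id)^{k_t}$ termwise, iterating Lemma~\ref{prop:binom_seq_equivalence} gives $(E-\id)^{j}\binom{n}{k}=\binom{n}{k-j}$ for $j\le k$, with the convention $\binom{n}{m}=0$ for $m<0$. So $(E-\id)^{k_t}z_n=c_t\binom{n}{0}=c_t\neq 0$. I would present the argument through Theorem~\ref{thm:LC} and keep this computation as a remark.
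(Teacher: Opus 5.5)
Your proposal is correct and takes essentially the paper's route. First, $(x-1)^{k_t+1}$ generates the sum, by linearity and Theorem~\ref{thm:LC}. Then $(x-1)^{k_t}$ fails because it annihilates every lower summand and leaves $c_t(E-\id)^{k_t}\binom{n}{k_t}\neq 0$. Your explicit use of Lemma~\ref{lem:characteristic_divides_generator} to reduce the candidates to powers $(x-1)^s$ spells out the step the paper compresses into the word ``Necessarily''.
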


\begin{proof} Theorem \ref{thm:LC} implies that $(x-1)^{k_t+1}$ is a generator polynomial of $\textbinomseq{n}{k_i}$ for all $i = 1,2,\ldots,t$. Thus, we have
\begin{align*}
(E-\id)^{k_t+1} \sum_{i=1}^{t} c_i \binom{n}{k_i} & = \sum_{i=1}^t c_i (E-\id)^{k_t-k_i}(E-\id)^{k_i+1} \binom{n}{k_i} \\
& = \sum_{i=1}^t c_i (E-\id)^{k_t-k_i} 0 = 0, \quad n \geq 0,
\end{align*}
so it generates $\sum_{i=1}^{t} c_i \textbinomseq{n}{k_i}$. Now, let us see that $(x-1)^{k_t}$ do not generate that sequence:
\begin{align*}
(E-\id)^{k_t} \sum_{i=1}^{t} c_i \binom{n}{k_i} & = \sum_{i=1}^{t-1} c_i (E-\id)^{k_t-k_i-1}(E-\id)^{k_i+1} \binom{n}{k_i} + c_t (E-\id)^{k_t} \binom{n}{k_t} \\
& = c_t (E-\id)^{k_t} \binom{n}{k_t} \neq 0, \quad n \geq 0,
\end{align*}
since $(x-1)^{k_t}$ do not generate $\textbinomseq{n}{k_t}$. Hence $(x-1)^{k_t}$ is not a generator polynomial. Necessarily, $(x-1)^{k_t+1}$ is the characteristic polynomial of $\sum_{i=1}^{t} c_i \textbinomseq{n}{k_i}$.
\end{proof}

\color{black}

% \subsubsection{Every sequence of period $p^L$ can be written as a linear combination of binomial sequences of period $p^L$ or less}
% \todosara{pensar este título}

\subsubsection{The linear space of $p$-ary sequences with period $p^L$}

In this section we show that every sequence in $\mathbb{F}_p$ of period $p^L$, for some integer $L \geq 0$, can be written as a linear combination of binomial sequences of period $p^\ell$, with $\ell \leq L$. Moreover, we prove that the set of sequences of period $p^\ell$, with $\ell \leq L$, is a linear space and the set of binomial sequences of period $p^\ell$, with $\ell \leq L$, is a basis of this space.

\begin{theorem}\label{thm:every_sequence_linear_combination} Every sequence of period $p^L$ can be written as a linear combination of binomial sequences of period $p^{\ell}$, with $\ell\leq L$.    
\end{theorem}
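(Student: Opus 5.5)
Let $\{z_n\}$ have period $p^L$. I would show that $\{z_n\}$ lies in the span of the $p^L$ sequences $\textbinomseq{n}{k}$, $0\le k\le p^L-1$. By Theorem~\ref{thm:period} and the corollaries after it, each of these has period $p^{\ell}$ with $\ell\le L$. The natural way to see this is to write $k=p^{\ell-1}+r$ with $\ell\le L$ and apply Theorem~\ref{thm:period}; the case $k=0$ has period $1$, and the small cases $k\le p-1$ are covered by the corollary to Theorem~\ref{thm:period}.

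The plan has two steps: show the span is at least as large as needed, then solve explicitly for the coefficients.

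For the first step, periodicity gives $z_{n+p^L}=z_n$, so $x^{p^L}-1$ generates $\{z_n\}$. In characteristic $p$ we have $x^{p^L}-1=(x-1)^{p^L}$. Hence $\{z_n\}$ belongs to the solution space $W$ of the recurrence with generator $(x-1)^{p^L}$. This space has dimension $p^L$, since a sequence in it is determined uniquely by its $p^L$ initial values.

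For the second step, Theorem~\ref{thm:LC} says $(x-1)^{k+1}$ is the characteristic polynomial of $\textbinomseq{n}{k}$. By Lemma~\ref{lem:characteristic_divides_generator}, $(x-1)^{p^L}$ therefore generates every $\textbinomseq{n}{k}$ with $k<p^L$, so all of them lie in $W$.

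Next I check that these $p^L$ sequences are linearly independent. Corollary~\ref{cor:repr} does this directly: a nontrivial combination with largest index $k_t$ has characteristic polynomial $(x-1)^{k_t+1}\neq 1$, so it is not the zero sequence. Thus they form a basis of $W$, and $\{z_n\}$ is a linear combination of them.

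To make the coefficients explicit, I would use the Pascal--Hadamard matrix. Restricting to the first $p^L$ terms, the rows of $H_L$ are exactly these binomial sequences, so we need coefficients $c$ with $c^tH_L=(z_0,\dots,z_{p^L-1})$. Theorem~\ref{thm:inversaHL} gives $H_L^{-1}=H_L^\antidiagonaltranspose$, so $c^t=(z_0,\dots,z_{p^L-1})H_L^\antidiagonaltranspose$. Agreement on the first $p^L$ terms extends to all $n$ because both sides have period dividing $p^L$. This route avoids the dimension count altogether.

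The main obstacle is justifying that a sequence in $W$ is fixed by its first $p^L$ terms, or equivalently that both sides are $p^L$-periodic so that agreement on one period suffices. The periodicity argument via Theorem~\ref{thm:period} handles this cleanly, so I would present the matrix-inverse argument and keep the independence argument via Corollary~\ref{cor:repr} as a remark.
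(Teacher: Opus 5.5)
Your main argument is exactly the paper's proof. You solve $c^t H_L=(z_0,\dots,z_{p^L-1})$ using $H_L^{-1}=H_L^{\antidiagonaltranspose}$ from Theorem~\ref{thm:inversaHL}, then extend from one period to all $n$ because both sides are $p^L$-periodic, a step the paper leaves implicit and you make explicit. Your dimension-count alternative is also a complete proof, and it does not even need Theorem~\ref{thm:period}, since every element of $W$ satisfies $s_{n+p^L}=s_n$; just note that $(x-1)^{p^L}$ generating $\textbinomseq{n}{k}$ for $k<p^L$ follows from the ideal property of generator polynomials (a multiple of a generator is a generator), not from Lemma~\ref{lem:characteristic_divides_generator}, which states the converse.
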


\begin{proof}
Let $\{ s_n \} $ be an arbitrary sequence of period $p^L$.
We want to prove that this sequence can be obtained as a linear combination of the first $p^L$ binomial sequences.
%We want to show that there are coefficients $c_0,c_1,c_2,\ldots,c_{p^L-1} \in \mathbb{F}_p$ such that 
%\[
%\{ s_n \} 
%=
%c_0 \binomseq{n}{0}
% +
% c_1 \binomseq{n}{1}
% +
% c_2 \binomseq{n}{2}
% + 
% \cdots 
% +
% c_{p^L-1} \binomseq{n}{p^L-1}
% \]
Since $\{s_n\} $ is periodic of period $p^L$, we just need to find  the coefficients $c_0,c_1,c_2,\ldots,c_{p^L-1} \in \mathbb{F}_p$ such that 
\begin{equation}\label{eq:repr}
\{ s_n \}_{n=0}^{p^L-1}
=
c_0 \binomseq{n}{0}_{n=0}^{p^L-1}
+
c_1 \binomseq{n}{1}_{n=0}^{p^L-1}
+
c_2 \binomseq{n}{2}_{n=0}^{p^L-1}
+ 
\cdots 
+
c_{p^L-1} \binomseq{n}{p^L-1}_{n=0}^{p^L-1}.
\end{equation}
In matrix form, this is equivalent to 
\begin{align*}
\left[ 
\begin{array}{ccccc}
    s_0 & s_1 & s_2 & \ldots & s_{p^L-1}
\end{array}
\right]
&=
\left[
\begin{array}{ccccc}
    c_0 & c_1 & c_2 & \ldots & c_{p^L-1}
\end{array}
\right] H_L
\\
&=
\left[
\begin{array}{ccccc}
    c_0 & c_1 & c_2 & \ldots & c_{p^L-1}
\end{array}
\right]
\left[ 
    \begin{array}{ccccc}
        \binom{0}{0} & \binom{1}{0} & \binom{2}{0} & \cdots & \binom{p^L-1}{0} \\[0.5em]
        \binom{0}{1} & \binom{1}{1} & \binom{2}{1} & \cdots & \binom{p^L-1}{1} \\[0.5em]
        \binom{0}{2} & \binom{1}{2} & \binom{2}{2} & \cdots & \binom{p^L-1}{2} \\
        \vdots & \vdots & \vdots & \ddots & \vdots \\
        \binom{0}{p^L-1} & \binom{1}{p^L-1} & \binom{2}{p^L-1} & \cdots & \binom{p^L-1}{p^L-1}
    \end{array}
\right],\end{align*}
According to
Theorem~\ref{thm:inversaHL},
    $H_L$ is non-singular. Therefore,  this linear system has a unique solution in $\mathbb{F}_p$ given by:
\begin{align}\label{eq:mat}
\left[ 
\begin{array}{ccccc}
    c_0 & c_1 & c_2 & \ldots & c_{p^L-1}
\end{array}
\right]
&=
\left[
\begin{array}{ccccc}
    s_0 & s_1 & s_2 & \ldots & s_{p^L-1}
\end{array}
\right]H_L^{-1}
\nonumber \\
&=\left[
\begin{array}{ccccc}
    s_0 & s_1 & s_2 & \ldots & s_{p^L-1}
\end{array}
\right]
\left[ 
    \begin{array}{ccccc}
        \binom{p^L-1}{p^L-1} & \binom{p^L-1}{p^L-2} & \cdots & \binom{p^L-1}{1} & \binom{p^L-1}{0} \\[0.5em]
        \binom{p^L-2}{p^L-1} & \binom{p^L-2}{p^L-2} & \cdots & \binom{p^L-2}{1} & \binom{p^L-2}{0} \\
        \vdots & \vdots & \ddots & \vdots & \vdots \\[0.25em]
        \binom{1}{p^L-1} & \binom{1}{p^L-2} & \cdots & \binom{1}{1} & \binom{1}{0} \\[0.5em]
        \binom{0}{p^L-1} & \binom{0}{p^L-2} & \cdots & \binom{0}{1} & \binom{0}{0}
    \end{array}
\right].
\end{align}
As a consequence, there exist coefficients $c_0, \ldots, c_{p^L-1}$, such that the expression in \eqref{eq:repr} is valid.
 Notice that the columns in the matrix \eqref{eq:mat} are the binomial sequences mod $p$.
\end{proof}

Now, recall that the set $V(\mathbb{F}_p)$ of all $p$-ary sequences is a linear space. The above theorem shows us the structure of the set of all periodic sequences of period  $p^\ell$ with $\ell\leq L$.

\begin{corollary}\label{cor:vector_space_sequences} Let $W$ be the set of  $p$-ary sequences of period  $p^\ell$ with $\ell\leq L$. Then $W$ is a linear subspace of $V(\mathbb{F}_p)$ of dimension $p^L$ and $\mathcal{B} = \big\{ \textbinomseq{n}{k} \mid \ 0\leq k < p^L \big\}$ is a basis of $W$.
\end{corollary}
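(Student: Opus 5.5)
The plan is to show three things about $W$ in turn: it is closed under the linear operations, the family $\mathcal{B}$ spans it, and $\mathcal{B}$ is linearly independent. The dimension count then follows immediately.

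The first point is that a sequence has period $p^\ell$ for some $\ell\le L$ exactly when it satisfies $s_{n+p^L}=s_n$ for all $n\ge 0$. This is because the least period of such a sequence divides $p^L$, so it is a power of $p$ not exceeding $p^L$. Conversely, a period $p^\ell$ with $\ell \le L$ divides $p^L$. In operator form this condition reads $(E^{p^L}-\id)s_n=0$. So $W$ is precisely the set of sequences generated by $x^{p^L}-1=(x-1)^{p^L}$ over $\mathbb{F}_p$, which also contains the zero sequence. The paper already notes that the set of sequences generated by a fixed polynomial is a linear subspace, so $W$ is a subspace of $V(\mathbb{F}_p)$.

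For spanning, I take $\{s_n\}\in W$ with least period $p^\ell$, $\ell\le L$. Since $s_{n+p^L}=s_n$, I can run the proof of Theorem~\ref{thm:every_sequence_linear_combination} with the full window of $p^L$ terms. That gives coefficients $c_0,\dots,c_{p^L-1}$ with $\{s_n\}_{n=0}^{p^L-1}=\sum_k c_k\binomseq{n}{k}_{n=0}^{p^L-1}$, obtained via $H_L^{-1}=H_L^\antidiagonaltranspose$ from Theorem~\ref{thm:inversaHL}. Each $\textbinomseq{n}{k}$ with $k<p^L$ has period dividing $p^L$ by Theorem~\ref{thm:period}; when $k=0$ this is trivial. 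So both sides of the identity are $p^L$-periodic, and agreeing on the first $p^L$ terms makes them equal as sequences. The same periodicity shows $\mathcal{B}\subset W$.

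For linear independence, suppose $\sum_{k<p^L} c_k\textbinomseq{n}{k}=0$ with some $c_k\neq0$. By Corollary~\ref{cor:repr}, this combination has characteristic polynomial $(x-1)^{k_t+1}$, where $k_t$ is the largest index with $c_{k_t}\ne0$. But the zero sequence has characteristic polynomial $1$, which is a contradiction. Alternatively, restricting to the first $p^L$ terms turns the relation into $cH_L=0$, and $H_L$ is invertible. Hence $\mathcal{B}$ is a basis, and $\dim W=|\mathcal{B}|=p^L$.

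The main obstacle is the bookkeeping in the first step: the sequences of period exactly $p^\ell$ for a single $\ell$ do not form a subspace, since sums can drop in period. The fix is to identify $W$ with the set of $p^L$-periodic sequences through the divisibility argument, and the rest is then routine.
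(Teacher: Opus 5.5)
Your proposal is correct and follows the paper's route: closure, spanning via Theorem~\ref{thm:every_sequence_linear_combination}, and independence from the invertibility of $H_L$. Your extra steps simply make explicit what the paper calls easy or leaves implicit: identifying $W$ with the sequences generated by $(x-1)^{p^L}=x^{p^L}-1$, checking that $\mathcal{B}\subset W$, and noting that two $p^L$-periodic sequences agreeing on one window of $p^L$ terms are equal.
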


\begin{proof} 
It is easy to see that the set $W$ of sequences of period  $p^\ell$ with $\ell\leq L$ is closed under the sum of sequences and the product by an element of $\mathbb{F}_p$, so it is a linear space.
Also, Theorem \ref{thm:every_sequence_linear_combination} shows that every sequence of period $p^\ell$ with $\ell \leq L$ can be written as a linear combination of the sequences $\mathcal{B} = \big\{ \textbinomseq{n}{k} \mid 0\leq k < p^L \big\}$, then $\mathcal{B}$ generates $W$. 
Moreover, since matrix $H_L$ is non-singular (Theorem~\ref{thm:inversaHL}), $\mathcal{B}$ is a basis and hence $W$ is of dimension $p^L$.
\end{proof}

Moreover, as a consequence of Theorem \ref{thm:every_sequence_linear_combination} and Corollary \ref{cor:repr} we deduce the following result.

\begin{theorem} Every $p$-ary sequence of period $p^L$ has a characteristic polynomial of the form $(x-1)^\ell$ and linear complexity $LC = \ell$, with $p^{L-1} < \ell \leq p^L$.
\end{theorem}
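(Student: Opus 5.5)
The plan is to combine the representation result with the characteristic polynomial computation, and then use the exact period of the leading binomial sequence to pin down $\ell$. Let $\{s_n\}$ have period exactly $p^L$.

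\textbf{Step 1: binomial expansion and characteristic polynomial.} By Theorem~\ref{thm:every_sequence_linear_combination}, we can write $\{s_n\}=\sum_{k=0}^{p^L-1} c_k \textbinomseq{n}{k}$. By Corollary~\ref{cor:vector_space_sequences} this representation is unique. Discard the zero coefficients and let $k_t$ be the largest index with $c_{k_t}\neq 0$. The sequence is nonzero, since a zero sequence would have period $1=p^0$; we treat $L=0$ separately, where only $\textbinomseq{n}{0}$ appears and $\ell=1$. Corollary~\ref{cor:repr} then gives characteristic polynomial $(x-1)^{k_t+1}$. Set $\ell=k_t+1$. Since $k_t\le p^L-1$, we immediately get $\ell\le p^L$.

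\textbf{Step 2: the lower bound, which is the main obstacle.} We must show that $k_t\ge p^{L-1}$, i.e.\ $\ell>p^{L-1}$. Suppose instead that $k_t<p^{L-1}$. Then every $\textbinomseq{n}{k}$ appearing in the expansion has $k\le p^{L-1}-1$.

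\emph{Period of each summand.} For $1\le k<p^{L-1}$, write $k=p^{j}+r$ with $p^j\le k<p^{j+1}$ and $j\le L-2$. By Theorem~\ref{thm:period}, the period of $\textbinomseq{n}{k}$ is $p^{j+1}$, which divides $p^{L-1}$. The sequence $\textbinomseq{n}{0}$ has period $1$.

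\emph{Contradiction.} Each summand therefore satisfies $s_{n+p^{L-1}}=s_n$, and so does the linear combination. Hence the period of $\{s_n\}$ divides $p^{L-1}$, contradicting that it equals $p^L$ (here $L\ge1$). Therefore $k_t\ge p^{L-1}$, which gives $p^{L-1}<\ell\le p^L$.

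\textbf{Step 3: a remaining technicality.} One point needs care: Theorem~\ref{thm:period} as stated restricts $k$ relative to $p^{L+1}-1$. So we should check that every $k$ with $p^j\le k<p^{j+1}$ is covered, i.e.\ that $r=k-p^j$ lies in the allowed range. This holds because $r<(p-1)p^j\le p^{j+1}-1$. The edge case $k=p^{j+1}-1$ is also handled by the theorem on $\textbinomseq{n}{p^L-1}$, which gives period $p^{j+1}$.
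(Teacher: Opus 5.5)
Your proof is correct and follows the paper's route. The paper obtains this statement directly from Theorem~\ref{thm:every_sequence_linear_combination} and Corollary~\ref{cor:repr} with no further argument, and your Step~2 usefully makes the lower bound $\ell>p^{L-1}$ explicit; that bound needs the extra fact that $\textbinomseq{n}{k}$ is $p^{L-1}$-periodic for $k<p^{L-1}$ (Theorem~\ref{thm:period} in its valid range $r<(p-1)p^j$, or directly $(1+x)^{p^{L-1}}\equiv 1+x^{p^{L-1}} \bmod p$), and this range already covers your Step~3 edge case.

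The only slip is your $L=0$ case. The zero sequence has period $1=p^0$ and characteristic polynomial $1=(x-1)^0$, so $\ell=0$ there, and the statement (as in the paper) tacitly requires the sequence to be nonzero.
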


\begin{definition}
Representation $\sum_{i=0}^{LC-1}c_i \textbinomseq{n}{i}$ is known as \textbf{binomial representation} of the sequence. 
\end{definition}

As a consequence of the proof of Theorem~\ref{thm:every_sequence_linear_combination}, we can recover the binomial representation from the sequence—and vice versa—using the Pascal-Hadamard matrix (expression~\eqref{eq:mat}). The following example illustrates this relationship.
 
\begin{example}\label{ex:repr}
    Consider for example, the sequence 
    $\{    0   1   2   1   2   2   2   0   2\ldots \}$ with period $T=9$ in $\mathbb{F}_3$.
We can use the matrix in \eqref{eq:mat} to compute the coefficients of the binomial representation of the sequence:
\begin{align*}
\left[
    \begin{array}{ccccccccc}
        0 & 1 & 2 & 1 & 2 & 2 & 2 & 0 & 2
    \end{array}
\right]
&
\left[
\begin{array}{ccccccccc}
\binom{8}{8} & \binom{8}{7} & \binom{8}{6} & \binom{8}{5} & \binom{8}{4} & \binom{8}{3} & \binom{8}{2} & \binom{8}{1} & \binom{8}{0} \\[0.3em]
\binom{7}{8} & \binom{7}{7} & \binom{7}{6} & \binom{7}{5} & \binom{7}{4} & \binom{7}{3} & \binom{7}{2} & \binom{7}{1} & \binom{7}{0} \\[0.3em]
\binom{6}{8} & \binom{6}{7} & \binom{6}{6} & \binom{6}{5} & \binom{6}{4} & \binom{6}{3} & \binom{6}{2} & \binom{6}{1} & \binom{6}{0} \\[0.3em]
\binom{5}{8} & \binom{5}{7} & \binom{5}{6} & \binom{5}{5} & \binom{5}{4} & \binom{5}{3} & \binom{5}{2} & \binom{5}{1} & \binom{5}{0} \\[0.3em]
\binom{4}{8} & \binom{4}{7} & \binom{4}{6} & \binom{4}{5} & \binom{4}{4} & \binom{4}{3} & \binom{4}{2} & \binom{4}{1} & \binom{4}{0} \\[0.3em]
\binom{3}{8} & \binom{3}{7} & \binom{3}{6} & \binom{3}{5} & \binom{3}{4} & \binom{3}{3} & \binom{3}{2} & \binom{3}{1} & \binom{3}{0} \\[0.3em]
\binom{2}{8} & \binom{2}{7} & \binom{2}{6} & \binom{2}{5} & \binom{2}{4} & \binom{2}{3} & \binom{2}{2} & \binom{2}{1} & \binom{2}{0} \\[0.3em]
\binom{1}{8} & \binom{1}{7} & \binom{1}{6} & \binom{1}{5} & \binom{1}{4} & \binom{1}{3} & \binom{1}{2} & \binom{1}{1} & \binom{1}{0} \\[0.3em]
\binom{0}{8} & \binom{0}{7} & \binom{0}{6} & \binom{0}{5} & \binom{0}{4} & \binom{0}{3} & \binom{0}{2} & \binom{0}{1} & \binom{0}{0}
\end{array}
\right]\\
=
\left[
\begin{array}{ccccccccc}
    0 & 1 & 2 & 1 & 2 & 2 & 2 & 0 & 2
    \end{array}
\right]
&
\left[
\begin{array}{ccccccccc}
1 & 2 & 1 & 2 & 1 & 2 & 1 & 2 & 1 \\
0 & 1 & 1 & 0 & 2 & 2 & 0 & 1 & 1 \\
0 & 0 & 1 & 0 & 0 & 2 & 0 & 0 & 1 \\
0 & 0 & 0 & 1 & 2 & 1 & 1 & 2 & 1 \\
0 & 0 & 0 & 0 & 1 & 1 & 0 & 1 & 1 \\
0 & 0 & 0 & 0 & 0 & 1 & 0 & 0 & 1 \\
0 & 0 & 0 & 0 & 0 & 0 & 1 & 2 & 1 \\
0 & 0 & 0 & 0 & 0 & 0 & 0 & 1 & 1 \\
0 & 0 & 0 & 0 & 0 & 0 & 0 & 0 & 1 \\
\end{array}
\right]
=
\left[
    \begin{array}{ccccccccc}
        0 & 1 & 0 & 1 & 0 & 2 & 0 & 0 & 0
    \end{array}
\right].
\end{align*}
Therefore, this sequence can be computed as:
\[
\binomseq{n}{1} + \binomseq{n}{3} + 2 \cdot
\binomseq{n}{5} 
=
\{0 1 2 0 1 2 0 1 2 \}+\{0 0 0 1 1 1 2 2 2 \}+2\cdot \{0 0 0 0 0 1 0 0 2 \}=\{   0   1   2   1   2   2   2   0   2\},
\]
and hence, its binomial representation is $\textbinomseq{n}{1} + \textbinomseq{n}{3} + 2 \cdot \textbinomseq{n}{5}$.
\end{example}

A natural question arises when we think about the binomial representation of shifted versions of one sequence. The next theorem gives us a method to obtain the binomial representation of such shifts. Its proof is a direct consequence of Lemma~\ref{lem:pascal_rule}.

\color{black}
\begin{theorem}\label{thm:bin}
Let $\sum_{i=0}^{LC-1} c_i \textbinomseq{n}{i}$ with $c_i \in \Fset_p$ be the binomial representation of a sequence. If we shift cyclically such a sequence one bit to the left, then its binomial representation is:
\begin{equation*}\label{eq:repr2}
\sum_{i=1}^{LC-1}c_i \left[\left\{\!\binom{n}{i}+\binom{n}{i-1}\!\right\} \right]+c_0 \binomseq{n}{0}.
\end{equation*}
\end{theorem}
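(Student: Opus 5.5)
The plan is to work term by term with the shift operator and use linearity. Let $\{s_n\}$ be the sequence with $s_n=\sum_{i=0}^{LC-1}c_i\binom{n}{i}$ for all $n\ge 0$. The sequence is periodic, so shifting it cyclically one position to the left yields the sequence $\{s_{n+1}\}=\textbf{E}\{s_n\}$. Since $\textbf{E}$ is a linear operator on $V(\mathbb{F}_p)$, we get
\[
s_{n+1}=E s_n=\sum_{i=0}^{LC-1}c_i\,E\binom{n}{i}=\sum_{i=0}^{LC-1}c_i\binom{n+1}{i},\qquad n\ge 0 .
\]
It therefore suffices to compute $\binom{n+1}{i}$ for each fixed $i$.

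For $i=0$ we have $\binom{n+1}{0}=1=\binom{n}{0}$, which gives the term $c_0\textbinomseq{n}{0}$. For $i\ge 1$ we apply Pascal's rule $\binom{n+1}{i}=\binom{n}{i}+\binom{n}{i-1}$. Lemma~\ref{lem:pascal_rule} states this only for $n>i\ge1$, so the boundary cases must be checked separately.

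\begin{itemize}
\item If $n<i-1$, both sides vanish.
\item If $n=i-1$, the left side is $\binom{i}{i}=1$ and the right side is $0+1$.
\end{itemize}

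The identity is a polynomial identity in $n$ and holds over $\mathbb{Z}$, so it survives reduction modulo $p$. Note that the binomial sequence was defined with $s_n=0$ for $n<k$, which agrees with the usual convention $\binom{n}{k}=0$. Hence, as sequences,
\[
\binomseq{n+1}{i}=\left\{\binom{n}{i}+\binom{n}{i-1}\right\}\quad\text{for } i\ge1 .
\]
Substituting these identities into the sum gives exactly the stated expression.

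The main obstacle is this boundary check, since the cited lemma does not cover small $n$. To make the result a genuine binomial representation (a combination of distinct basis sequences), we can also regroup: by Corollary~\ref{cor:vector_space_sequences}, the new coefficient of $\textbinomseq{n}{i}$ is $c_i+c_{i+1}$ for $0\le i<LC-1$, and it is $c_{LC-1}$ for $i=LC-1$. By uniqueness of coefficients (nonsingularity of $H_L$), this regrouped form is the binomial representation of the shifted sequence. It also confirms, via Corollary~\ref{cor:repr}, that the linear complexity is unchanged, since the top coefficient $c_{LC-1}\neq0$.
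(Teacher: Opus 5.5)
Your proof is correct and takes the paper's route. The paper simply says the result is a direct consequence of Pascal's rule (Lemma~\ref{lem:pascal_rule}) applied termwise to $\textbf{E}\textbinomseq{n}{i}$, and your regrouped coefficients $c_i+c_{i+1}$ agree with the shift matrix of Corollary~\ref{cor:base}. One small slip in your boundary check: the lemma requires $n>i$, so the case $n=i$ is also not covered by it. That case is immediate ($\binom{i+1}{i}=i+1=\binom{i}{i}+\binom{i}{i-1}$), and it is in any case subsumed by your observation that Pascal's rule holds over $\mathbb{Z}$ for all $n\ge 0$.
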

 
\begin{example}
    Consider again the sequence in Example~\ref{ex:repr}.
    If we shift this sequences one position to the left, we obtain the following sequence:
    $\{121222020 \ldots\}$.
    According to Theorem~\ref{thm:bin}, we can compute the binomial representation as:
    $$
    \left( \binomseq{n}{1} + \binomseq{n}{0} \right) + \left( \binomseq{n}{3} + \binomseq{n}{2} \right) + 2 \left( \binomseq{n}{5} + \binomseq{n}{4} \right) =
\{1   2   1   2   2   2   0   2   0 \}.
    $$
\end{example}
\color{black}

The next result gives us the matrix of the shift operator for the sequences of period $p^\ell$ with $\ell\leq L$. 
The proof is a direct consequence of Theorem \ref{thm:bin}.

% \begin{corollary} Let $W$ be the linear space of $p$-ary sequences of period  $p^\ell$ with $\ell\leq L$ and let $E|_W: W \rightarrow W$ be the restriction of the shift operator to $W$. The matrix associated to $E|_W$ with respect to the basis $\mathcal{B} = \big\{ \binomseq{n}{k}\ | \  k=0, 1, \ldots, p^L-1 \big\}$ is \todosara{Si dominio e contradominio es $W$ creo que no es necesario colocar la restricción}
% \begin{equation}\label{eq:E_matrix}
% M_\mathcal{B}(E|_W) = \left[ 
%     \begin{array}{cccccc}
%         1 & 1 \\
%           & 1 & 1 \\
%           &  & \ddots & \ddots \\
%           &&& 1 & 1 \\
%           &&&&1
%     \end{array}
% \right] \in \mathbb{F}_p^{p^L \times p^L}.
% \end{equation}
% \end{corollary}

\begin{corollary} \label{cor:base}
Let $W$ be the linear space of $p$-ary sequences of period  $p^\ell$ with $\ell\leq L$ and let $\textbf{E}|_{W}: W \rightarrow W$ be the shift operator in $W$. The matrix associated to $\textbf{E}|_{W}$ with respect to the basis $\mathcal{B} = \big\{ \textbinomseq{n}{k} \mid k=0, 1, \ldots, p^L-1 \big\}$ is 
\begin{equation*}%\label{eq:E_matrix}
M_\mathcal{B}(\textbf{E}|_{W}) = \left[ 
    \begin{array}{cccccc}
        1 & 1 \\
          & 1 & 1 \\
          &  & \ddots & \ddots \\
          &&& 1 & 1 \\
          &&&&1
    \end{array}
\right] \in \mathbb{F}_p^{p^L \times p^L}.
\end{equation*}
\end{corollary}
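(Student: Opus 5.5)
The plan is to compute the image under $\textbf{E}|_W$ of each basis vector $\textbinomseq{n}{k}$, $0\le k<p^L$, and read off its coordinates in $\mathcal{B}$. First, $\textbf{E}$ maps $W$ into $W$. If $\{s_n\}$ satisfies $s_{n+p^\ell}=s_n$ for all $n$, then so does $\{s_{n+1}\}$, and a shift cannot enlarge the least period. Hence $\textbf{E}|_W$ is a well-defined linear endomorphism of $W$. By Corollary~\ref{cor:vector_space_sequences}, $\mathcal{B}$ is a basis of $W$, so the matrix is determined by the images of the basis elements.

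For $k=0$, the sequence $\textbinomseq{n}{0}$ is constant, so $\textbf{E}\textbinomseq{n}{0}=\textbinomseq{n}{0}$. This gives the single coordinate $1$ in position $0$.

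For $1\le k\le p^L-1$, I will apply Lemma~\ref{lem:pascal_rule} termwise: $\binom{n+1}{k}=\binom{n}{k}+\binom{n}{k-1}$. This gives
\[
\textbf{E}\binomseq{n}{k}=\binomseq{n}{k}+\binomseq{n}{k-1}.
\]
This is exactly the content of Theorem~\ref{thm:bin} (equivalently Lemma~\ref{prop:binom_seq_equivalence}). The one care point is that Lemma~\ref{lem:pascal_rule} is stated only for $n>k\ge1$. For $n<k$ I will check it directly: when $n<k-1$ all three terms are $0$, and when $n=k-1$ we get $1=0+1$. Both sides are computed in $\mathbb{Z}$ and then reduced mod $p$, so the identity holds for every $n\ge0$. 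It follows that the coordinate vector of $\textbf{E}\textbinomseq{n}{k}$ has a $1$ in positions $k-1$ and $k$ and zeros elsewhere. Both indices lie in $\{0,\dots,p^L-1\}$, so the image stays inside the span of $\mathcal{B}$, consistent with the first step.

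Finally, I will assemble the matrix. Write the coordinates of $\textbf{E}\textbinomseq{n}{k}$ as the $k$-th column, indexing rows and columns from $0$. Column $0$ is $e_0$, and column $k\ge1$ is $e_{k-1}+e_k$. This is precisely the upper bidiagonal matrix with $1$'s on the diagonal and superdiagonal. There is no real obstacle here. The only points needing attention are the boundary cases $n<k$ of Pascal's rule and the convention that coordinates are written as columns; under that convention the $1$ from $\textbinomseq{n}{k-1}$ lands above the diagonal, as displayed. If the paper instead uses the row convention suggested by the row-vector formulation in Theorem~\ref{thm:every_sequence_linear_combination}, the matrix is the transpose, and I would note this.
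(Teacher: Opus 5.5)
Your proposal is correct and is essentially the paper's argument. The paper obtains the matrix directly from Theorem~\ref{thm:bin}, i.e.\ from Pascal's rule $\textbf{E}\textbinomseq{n}{k}=\textbinomseq{n}{k}+\textbinomseq{n}{k-1}$ for $k\ge 1$ together with $\textbf{E}\textbinomseq{n}{0}=\textbinomseq{n}{0}$, read off column by column. This uses the column convention the paper later adopts in the form $M_\mathcal{B}(\textbf{E}^r|_W)\,C_\mathcal{B}(\cdot)$, which settles your transpose hedge.

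One small slip: Lemma~\ref{lem:pascal_rule} only covers $n>k$ and your boundary check only covers $n<k$, so the case $n=k$ is missing. It is immediate, since $\binom{k+1}{k}=k+1=\binom{k}{k}+\binom{k}{k-1}$.
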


It turns out that the matrix $M_\mathcal{B}(\textbf{E}|_{W})$ exhibits a strong connection with binomial coefficients, a relationship that becomes evident upon computing its powers. The following corollary follows from straightforward computations.

\begin{corollary}\label{cor:Er} For any integer $r \in \{ 0,1,\ldots,p^L-1 \}$ we have
\begin{equation*}%\label{eq:E_matrix}
M_\mathcal{B}(\textbf{E}^r|_{W}) = M_\mathcal{B}(\textbf{E}|_{W})^r =
\left[ 
    \begin{array}{ccccccccccccccccc}
        \binom{r}{0} & \binom{r}{1} & \binom{r}{2} & \cdots & \cdots & \binom{r}{r-1} & \binom{r}{r} \\[0.3em]
        & \binom{r}{0} & \binom{r}{1} & \binom{r}{2} & \cdots & \cdots & \binom{r}{r-1} & \binom{r}{r} \\[0.3em]
        && \ddots & \ddots & \ddots &&& \ddots & \ddots \\[0.3em]
        &&& \binom{r}{0} & \binom{r}{1} & \binom{r}{2} & \cdots & \cdots & \binom{r}{r-1} & \binom{r}{r} \\[0.3em]
        &&&& \binom{r}{0} & \binom{r}{1} & \binom{r}{2} & \cdots & \cdots & \binom{r}{r-1} \\[0.3em]
        &&&&& \binom{r}{0} & \binom{r}{1} & \cdots & \cdots & \binom{r}{r-2} \\[0.3em]
        &&&&&& \ddots & \ddots & \ddots & \vdots \\[0.3em]
        &&&&&&& \binom{r}{0} & \binom{r}{1} & \binom{r}{2} \\[0.3em]
        &&&&&&&& \binom{r}{0} & \binom{r}{1} \\[0.3em]
        &&&&&&&&& \binom{r}{0} \\
    \end{array}
\right] \in \mathbb{F}_p^{p^L \times p^L},
\end{equation*}
and $M_\mathcal{B} \left( \textbf{E}^{p^L}|_{W} \right) = I_{p^L} \in \mathbb{F}_p^{p^L \times p^L}$.
\end{corollary}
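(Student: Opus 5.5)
The matrix $M=M_\mathcal{B}(\textbf{E}|_{W})$ of Corollary~\ref{cor:base} splits as $M = I + N$. Here $N$ is the $p^L\times p^L$ nilpotent matrix with ones on the first superdiagonal and zeros elsewhere. Multiplicativity $M_\mathcal{B}(\textbf{E}^r|_W)=M_\mathcal{B}(\textbf{E}|_W)^r$ holds because the matrix of a composition of linear maps in a fixed basis is the product of the matrices. So it suffices to compute $(I+N)^r$ and to check that $(I+N)^{p^L}=I$.

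Since $I$ and $N$ commute, the binomial theorem holds in the ring of matrices over $\mathbb{F}_p$:
\[ (I+N)^r=\sum_{j=0}^{r}\binom{r}{j}N^j . \]
Next I would check by induction on $j$ that $N^j$ has ones exactly in the positions $(i,i+j)$ and zeros elsewhere. Multiplying by $N$ moves each entry one column to the right, and $N^j=0$ once $j\ge p^L$. Therefore the $(i,i+j)$ entry of $(I+N)^r$ is $\binom{r}{j}$ when $i+j\le p^L$, and all other entries are $0$. This is exactly the banded upper triangular Toeplitz matrix displayed in the statement. For $r<p^L$ the band is truncated at the right edge, which explains the rows ending in $\binom{r}{r-1}$, $\binom{r}{r-2}$, and so on. All coefficients are reduced modulo $p$.

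For the last claim, take $r=p^L$. The Frobenius identity in characteristic $p$ (commuting matrices are enough for it to apply) gives
\[ (I+N)^{p^L}=I+N^{p^L}. \]
Equivalently, $\binom{p^L}{j}\equiv 0 \pmod p$ for $0<j<p^L$, by Lucas' theorem or by iterating $\binom{p}{j}\equiv 0$. Since $N$ is $p^L\times p^L$ and nilpotent, $N^{p^L}=0$, and so the power is $I_{p^L}$. This agrees with the fact that every sequence in $W$ has period dividing $p^L$, so $\textbf{E}^{p^L}$ acts as the identity on $W$; that gives an independent check.

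The main obstacle is only bookkeeping. One has to make sure the truncation at the matrix edge matches the displayed pattern, and that the convention for the matrix of a linear map (rows versus columns, matching Theorem~\ref{thm:bin}, where $\textbf{E}$ sends $\textbinomseq{n}{i}$ to $\textbinomseq{n}{i}+\textbinomseq{n}{i-1}$) gives an upper rather than lower triangular form. If the convention is transposed, the same computation applies to $I+N^t$.
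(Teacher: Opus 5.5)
Your proof is correct and follows the paper's approach: the paper only says the corollary ``follows from straightforward computations,'' and your argument is that computation made explicit. You write $M_\mathcal{B}(\textbf{E}|_W)=I+N$, expand $(I+N)^r=\sum_j\binom{r}{j}N^j$ with the binomial theorem, and get $(I+N)^{p^L}=I+N^{p^L}=I$ from the Frobenius identity (or, equivalently, from every sequence in $W$ having period dividing $p^L$); your column convention also matches the paper's later use $C_\mathcal{B}(\textbf{E}^r|_W(\cdot))=M_\mathcal{B}(\textbf{E}^r|_W)\,C_\mathcal{B}(\cdot)$, so the upper triangular form is the right one.
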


The matrix presented in Corollary~\ref{cor:Er} is already known in the literature and has appeared in other contexts, such as Coding Theory (see, for instance, \cite{SmarandacheGluessingLuerssenRosenthal2002}).

Moreover, from \eqref{eq:r-shifting} and Corollary \ref{cor:Er}, we can deduce the following result.

\begin{theorem}
Given a prime integer $p$ and an integer $k \geq 1$, we have that for all $n$ it holds: 
\[
\binom{n+r}{k} 
\equiv \binom{n}{k}+\sum_{j=0}^{r-1}\binom{n+i}{k-1}
\equiv
\sum_{j=0}^{\min\{r,k\}} \binom{r}{j} \binom{n}{k-j}
\bmod p.
\] 
\end{theorem}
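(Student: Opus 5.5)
The statement contains two congruences. The first is exactly the telescoping identity \eqref{eq:r-shifting}, read modulo $p$. The second is the Vandermonde-type identity. There is a typo in the statement: the summation index in the middle expression should be $i$, running from $0$ to $r-1$. I would prove the two equalities separately and over the integers, and then reduce modulo $p$. The statement is about binomial coefficients, and reduction mod $p$ is a ring homomorphism, so integer identities suffice.

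For the first congruence, I will induct on $r \geq 1$ using Lemma~\ref{lem:pascal_rule}, that is, $\binom{m+1}{k}=\binom{m}{k}+\binom{m}{k-1}$. For $r=1$ this is Pascal's rule with $m=n$. For the inductive step I write $\binom{n+r+1}{k}=\binom{n+r}{k}+\binom{n+r}{k-1}$ and apply the inductive hypothesis to $\binom{n+r}{k}$. The new term $\binom{n+r}{k-1}$ is precisely the $i=r$ summand, which reproduces the chain of equalities already displayed in \eqref{eq:r-shifting}. One caveat: Lemma~\ref{lem:pascal_rule} is stated with the restriction $n>k$, but Pascal's rule holds for all $n\ge 0$ and $k\ge1$ under the convention $\binom{a}{b}=0$ for $b>a$. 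I will invoke the rule in that unrestricted form, which matches how binomial sequences are defined.

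For the second congruence, the cleanest route is Corollary~\ref{cor:Er} together with Corollary~\ref{cor:base}. In the basis $\mathcal{B}$, the operator $\textbf{E}$ has matrix $I+N$, where $N$ is the superdiagonal shift. By the binomial theorem for commuting matrices, $(I+N)^r=\sum_j \binom{r}{j}N^j$. Concretely, $\textbf{E}^r$ sends $\textbinomseq{n}{k}$ to $\sum_{j}\binom{r}{j}\textbinomseq{n}{k-j}$, where the sum is over $0\le j\le \min\{r,k\}$, since $N^j$ kills indices below $0$ and $\binom{r}{j}=0$ for $j>r$. Comparing the $n$-th terms gives the identity. For this to be legitimate I need $k<p^L$ for some $L$, which can always be arranged. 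The case $r\ge p^L$ is harmless, because $\textbf{E}^{p^L}$ acts as the identity and the integer identity can also be checked directly.

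The step I expect to be delicate is keeping the argument honest about $r$ exceeding $p^L$ and about $\binom{r}{j}$ mod $p$. To avoid this, I would rather give a direct integer proof of the second equality by Vandermonde's convolution: compare the coefficients of $x^k$ in $(1+x)^{n+r}=(1+x)^r(1+x)^n$. This proves the identity for all $n,r,k\ge0$ at once, and the modular statement follows by reduction. The matrix picture of Corollary~\ref{cor:Er} then serves only as the interpretation the paper intends.
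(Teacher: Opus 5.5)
Your proposal is correct, and for the second congruence it takes a genuinely different route from the paper.

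\textbf{First congruence.} This matches the paper: the chain \eqref{eq:r-shifting} is the iterated Pascal rule, and your induction on $r$ formalizes it. Your caveat is well taken. \eqref{eq:r-shifting} already applies Lemma~\ref{lem:pascal_rule} to pairs where the hypothesis $n>k$ fails, so the unrestricted form of Pascal's rule is what is really used. You also correctly flag the $i$/$j$ index typo in the statement.

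\textbf{Second congruence.} The paper works inside the space $W$ of sequences of period $p^\ell$, $\ell\le L$. It computes the coordinates of $\textbf{E}^r|_W\bigl(\textbinomseq{n}{k}\bigr)$ in the basis $\mathcal{B}$ by applying the banded matrix of Corollary~\ref{cor:Er} to the $(k+1)$-th unit vector. It then treats the cases $k\ge r$ and $k<r$ separately and compares the resulting terms with \eqref{eq:r-shifting-seq_1}.

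Your main argument instead compares coefficients of $x^k$ in $(1+x)^{n+r}=(1+x)^r(1+x)^n$. This is Vandermonde's identity over $\mathbb{Z}$ for all $n,r,k\ge 0$, followed by reduction modulo $p$.

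\textbf{What each route buys.} Your route is shorter, self-contained and strictly more general. It needs none of the following:
\begin{itemize}
\item the basis result of Corollary~\ref{cor:vector_space_sequences}, and hence the invertibility of $H_L$;
\item a choice of $L$ with $k<p^L$;
\item the restriction $r\le p^L-1$ under which Corollary~\ref{cor:Er} is stated.
\end{itemize}
It also makes plain that the congruence is really an integer identity in which $p$ plays no role.

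The paper's route buys conceptual placement. It presents the identity as the expansion $M_\mathcal{B}(\textbf{E}|_W)^r=(I+N)^r=\sum_j\binom{r}{j}N^j$, tying it to the shift-operator structure of that section. The ``straightforward computation'' behind Corollary~\ref{cor:Er} is essentially this same binomial expansion, so both proofs rest on the binomial theorem in one guise or another.

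\textbf{A small point on your matrix sketch.} Justify $r\ge p^L$ by noting that $(I+N)^r=\sum_j\binom{r}{j}N^j$ holds for every $r$, since $I$ and $N$ commute. If you instead reduce $r$ modulo $p^L$ via $\textbf{E}^{p^L}|_W=\mathrm{id}$, you get coefficients $\binom{r \bmod p^L}{j}$. Matching these with $\binom{r}{j}$ modulo $p$ then silently requires Lucas's theorem.
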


\begin{proof} The shift operator acts as
\begin{equation}\label{eq:r-shifting-seq_1} 
    \textbf{E}^r|_W \left( \binomseq{n}{k} \right) = \binomseq{n+r}{k}. 
\end{equation}
Equation \eqref{eq:r-shifting} implies that
\begin{equation}\label{eq:r-shifting-seq_2} 
    \textbf{E}^r|_W \left( \binomseq{n}{k} \right) = \left\{ \binom{n}{k} + \binom{n}{k-1} + \binom{n+1}{k-1} + \binom{n+2}{k-1} + \cdots + \binom{n+r-1}{k-1} \right \}. 
\end{equation}
Now, let us denote by $C_\mathcal{B}(\textbf{u})$ the coordinates of the vector $\textbf{u}$ in the basis $\mathcal{B}$ (see Corollary~\ref{cor:base}). Then
\begin{align*}
C_\mathcal{B} & \left(\textbf{E}^r|_W \left( \binomseq{n}{k} \right) \right)
=
M_\mathcal{B}(\textbf{E}^r|_{W}) C_\mathcal{B} \left( \binomseq{n}{k} \right) \\
& = 
 \left[ 
    \begin{array}{ccccccccccccccccc}
        \binom{r}{0} & \binom{r}{1} & \binom{r}{2} & \cdots & \cdots & \binom{r}{r-1} & \binom{r}{r} \\[0.3em]
        & \binom{r}{0} & \binom{r}{1} & \binom{r}{2} & \cdots & \cdots & \binom{r}{r-1} & \binom{r}{r} \\[0.3em]
        && \ddots & \ddots & \ddots &&& \ddots & \ddots \\[0.3em]
        &&& \binom{r}{0} & \binom{r}{1} & \binom{r}{2} & \cdots & \cdots & \binom{r}{r-1} & \binom{r}{r} \\[0.3em]
        &&&& \binom{r}{0} & \binom{r}{1} & \binom{r}{2} & \cdots & \cdots & \binom{r}{r-1} \\[0.3em]
        &&&&& \binom{r}{0} & \binom{r}{1} & \cdots & \cdots & \binom{r}{r-2} \\[0.3em]
        &&&&&& \ddots & \ddots & \ddots & \vdots \\[0.3em]
        &&&&&&& \binom{r}{0} & \binom{r}{1} & \binom{r}{2} \\[0.3em]
        &&&&&&&& \binom{r}{0} & \binom{r}{1} \\[0.3em]
        &&&&&&&&& \binom{r}{0} \\
    \end{array}
\right]
\left[ 
    \begin{array}{ccccccccccccccccc}
        0 \\[0.3em]
        0 \\[0.3em]
        \vdots \\[0.3em]
        0 \\[0.3em]
        1 \\[0.3em]
        0 \\[0.3em]
        \vdots \\[0.3em]
        0 \\[0.3em]
        0 \\[0.3em]
        0 \\
    \end{array}
\right]
\left.
    \begin{array}{ccccccccccccccccc}
        \phantom{0} \\[0.3em]
        \phantom{0} \\[0.3em]
        \phantom{\vdots} \\[0.3em]
        \phantom{0} \\[0.3em]
        \text{$\leftarrow (k+1)$-th position}. \\[0.3em]
        \phantom{0} \\[0.3em]
        \phantom{\vdots} \\[0.3em]
        \phantom{0} \\[0.3em]
        \phantom{0} \\[0.3em]
        \phantom{0} \\
    \end{array}
\right.
\end{align*}
If $k \geq r$, we have that 
\[
C_\mathcal{B} \left(\textbf{E}^r|_W \left( \textbinomseq{n}{k} \right) \right) 
= 
\left[ 
    \begin{array}{c}
        0 \\ 
        \vdots \\
        0 \\[0.3em]
        \binom{r}{r} \\[0.5em]
        \binom{r}{r-1} \\
        \vdots \\[0.3em]
        \binom{r}{0} \\[0.5em]
        0 \\ 
        \vdots \\
        0 \\
    \end{array}
\right]
\left.
    \begin{array}{l}
        \phantom{0} \\ 
        \phantom{\vdots} \\
        \phantom{0} \\[0.3em]
        \text{$\leftarrow (k-r+1)$-th position} \\
         \phantom{0} \\[0.3em]
        \phantom{\vdots} \\[0.3em]
        \text{$\leftarrow (k+1)$-th position} \\[0.3em]
        \phantom{0} \\ 
        \phantom{\vdots} \\
        \phantom{0} \\
    \end{array}
\right.
\]
so
\begin{align}\label{eq:r-shifting-seq_3a} 
\textbf{E}^r|_W \left( \binomseq{n}{k} \right) 
 =  &
\binom{r}{r} \binomseq{n}{k-r} 
    + \binom{r}{r-1} \binomseq{n}{k-r+1}
    + \cdots  \\
    & 
    \phantom{.aaaaaa}+ \binom{r}{1} \binomseq{n}{k-1}
    + \binom{r}{0} \binomseq{n}{k} \nonumber \\
 =  &
\left\{ 
    \binom{r}{r} \binom{n}{k-r}
    + \binom{r}{r-1} \binom{n}{k-r+1}
    + \cdots \right. \nonumber\\
    & 
   \left. \phantom{aaaaaa} + \binom{r}{1} \binom{n}{k-1}
    + \binom{r}{0} \binom{n}{k}
\right\}.
\end{align}
If $k < r$, then
\[
C_\mathcal{B} \left(\textbf{E}^r|_W \left( \textbinomseq{n}{k} \right) \right) 
= 
\left[ 
    \begin{array}{c}
        \binom{r}{k} \\[0.3em]
        \binom{r}{k-1} \\
        \vdots \\[0.3em]
        \binom{r}{0} \\[0.3em]
        0 \\ 
        \vdots \\
        0 \\
    \end{array}
\right]
\left.
    \begin{array}{l}
        \phantom{0} \\[0.3em]
         \phantom{0} \\[0.3em]
        \phantom{\vdots} \\[0.3em]
        \text{$\leftarrow (k+1)$-th position} \\[0.3em]
        \phantom{0} \\
        \phantom{0} \\
        \phantom{0} \\
    \end{array}
\right.
\]
and in this case
\begin{align}\label{eq:r-shifting-seq_3b} 
\textbf{E}^r|_W \left( \binomseq{n}{k} \right) 
& = 
\binom{r}{k} \binomseq{n}{0}
    + \binom{r}{k-1} \binomseq{n}{1}
    + \cdots
    + \binom{r}{1} \binomseq{n}{k-1}
    + \binom{r}{0} \binomseq{n}{k} \nonumber \\
& = 
\left\{ 
    \binom{r}{k} \binom{n}{0}
    + \binom{r}{k-1} \binom{n}{1}
    + \cdots
    + \binom{r}{1} \binom{n}{k-1}
    + \binom{r}{0} \binom{n}{k}
\right\}.
\end{align}
Comparing the terms of the sequences given in \eqref{eq:r-shifting-seq_1}, \eqref{eq:r-shifting-seq_2}, \eqref{eq:r-shifting-seq_3a} and \eqref{eq:r-shifting-seq_3b} yields the result.
\end{proof}

As a corollary of the previous theorem, we deduce how the shifting operator acts on a sequence of period $p^L$.

\begin{corollary} Let $\{ s_n \}$ be a periodic $p$-ary sequence of period $p^L$ with binomial representation $\sum_{i=0}^{LC-1}c_i \textbinomseq{n}{i}$. Then
\begin{align*}
\{ s_{n+r} \} 
\equiv
\textbf{E}^r|_W(\{ s_n \})
& \equiv
\sum_{i=0}^{LC-1} c_i \binomseq{n+r}{i} \bmod p \\
& \equiv
\sum_{i=0}^{LC-1} c_i \left\{
\binom{n}{i}+\sum_{j=0}^{r-1}\binom{n+j}{i-1}
\right\} \bmod p \\
& \equiv
\sum_{i=0}^{LC-1} c_i \left\{
\sum_{j=0}^{\min\{r,k\}} \binom{r}{j} \binom{n}{i-j}
\right\}
\bmod p
.
\end{align*}
    
\end{corollary}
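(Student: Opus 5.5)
The plan is to reduce the statement to the preceding theorem on $\binom{n+r}{k}$, applied term by term, using linearity of the shift operator. Let $\{s_n\}$ have period $p^L$ and binomial representation $\sum_{i=0}^{LC-1} c_i \textbinomseq{n}{i}$. By Corollary~\ref{cor:vector_space_sequences} this representation is an identity in $W$, so $s_n \equiv \sum_{i} c_i \binom{n}{i} \bmod p$ for every $n \geq 0$, not only for $0 \leq n < p^L$. This holds because both sides are sequences of period dividing $p^L$ that agree on one full period, by Theorem~\ref{thm:period} and its consequences.

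First step: by definition of $\textbf{E}$, $\{s_{n+r}\} = \textbf{E}^r\{s_n\}$. Since $W$ is invariant under $\textbf{E}$, this equals $\textbf{E}^r|_W(\{s_n\})$. Since $\textbf{E}$ is linear over $\mathbb{F}_p$, so is $\textbf{E}^r$. This gives
\[
\textbf{E}^r|_W(\{s_n\}) = \sum_{i=0}^{LC-1} c_i\, \textbf{E}^r|_W\Big(\binomseq{n}{i}\Big) = \sum_{i=0}^{LC-1} c_i \binomseq{n+r}{i},
\]
which is the first displayed congruence.

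Second step: apply the previous theorem to each summand with $k=i$. Substituting the two expressions for $\binom{n+r}{i}$, namely \eqref{eq:r-shifting} and the convolution $\sum_{j} \binom{r}{j}\binom{n}{i-j}$, gives the second and third lines. Here the upper index $\min\{r,k\}$ in the statement is to be read as $\min\{r,i\}$.

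The only care needed is the term $i=0$, since the previous theorem assumes $k\geq 1$. For $i=0$, $\binom{n+r}{0}=1=\binom{n}{0}$. The inner sum $\sum_{j=0}^{r-1}\binom{n+j}{-1}$ vanishes under the convention that binomial coefficients with negative lower index are zero. The convolution reduces to its $j=0$ term. So the formula holds for $i=0$ as well.

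I expect the main obstacle to be bookkeeping rather than mathematics. One must make sure that $r$ may exceed $p^L-1$ (reduce $r$ modulo $p^L$ using $M_\mathcal{B}(\textbf{E}^{p^L}|_W)=I$ from Corollary~\ref{cor:Er}, or note that the identities hold for all $r$ anyway). One must also fix the negative-index convention so that the index ranges in the statement are consistent.
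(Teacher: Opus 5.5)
Your proposal is correct and follows the argument the paper intends. The paper gives no separate proof: it presents the corollary as an immediate consequence of the preceding theorem, obtained by applying the linear operator $\textbf{E}^r|_W$ term by term to the binomial representation, exactly as you do. Your extra care over the $i=0$ term (the preceding theorem assumes $k\geq 1$), over reading $\min\{r,k\}$ as $\min\{r,i\}$, and over $r\geq p^L$ fills in details the paper leaves implicit.
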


\color{black}
 
% \todomiguel{Revisad si os gusta lo que he escrito (en azul). He añadido la última congruencia (que es creo que es válida incluso sin tomar módulo p) en marrón para rematar la sección.}
% \todosara{si sumas secuencias, tienes que poner el módulo, si no como sabes que el resultado va a ser menor que p?}
% \todomiguel{Voy a escribirlo en módulo p}
% \todosara{no veo nada en marron aqui}
% \todosara{Lo marrón era lo que estaba dos páginas arriba}

\section{A $p$-ary expansion decomposition of the binomial sequence} \label{Sec:decomposition}

In this section we show that all the binomial sequences modulo $p$ can be obtained via a simple and efficient algorithm and basic operations modulo $p$. Rather than computing large binomial coefficients and taking modulo $p$, only two simple operations are required: the Hadamard product of two sequences and the expansion of a sequence. Before, we recall Lucas's theorem.

\begin{theorem}[Lucas's Theorem \cite{Lucas1878}]
Let $p$ be a prime number, and let $q$ and $r$ two integers such that
\begin{align*}
    q & = q_\ell p^\ell + q_{\ell-1} p^{\ell-1} + \cdots + q_1 p + q_0, \\
    r & = r_\ell p^\ell + r_{\ell-1} p^{\ell-1} + \cdots + r_1 p + r_0 ,
\end{align*}
where $q_i$ and $r_i$ are not necessarily nonzero for all $i\in \{ 0,1,\ldots,\ell \}$. Then the following congruence holds:
\[ \binom{q}{r} \equiv \prod_{i=0}^\ell \binom{q_i}{r_i} \bmod p. \]
\end{theorem}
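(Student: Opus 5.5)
We would prove Lucas's Theorem by comparing coefficients in a polynomial identity over $\mathbb{F}_p[x]$. The basic fact is the Frobenius identity $(1+x)^p = 1 + x^p$ in $\mathbb{F}_p[x]$. It holds because $\binom{p}{j} = \frac{p!}{j!(p-j)!}$ is divisible by $p$ for $0<j<p$: the prime $p$ divides the numerator but not the denominator. Iterating, by induction on $i$, gives $(1+x)^{p^i} = \bigl((1+x)^{p^{i-1}}\bigr)^p = (1+x^{p^{i-1}})^p = 1 + x^{p^i}$. Here the last step applies the same identity to the element $x^{p^{i-1}}$, using that the $p$-th power map is a ring homomorphism in characteristic $p$.

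Next we expand $(1+x)^q$ using the base-$p$ digits of $q$:
\[
(1+x)^q = \prod_{i=0}^{\ell} \bigl((1+x)^{p^i}\bigr)^{q_i} = \prod_{i=0}^{\ell} (1+x^{p^i})^{q_i} = \prod_{i=0}^{\ell} \sum_{j_i=0}^{q_i} \binom{q_i}{j_i} x^{j_i p^i}
\]
in $\mathbb{F}_p[x]$. The coefficient of $x^r$ on the left is $\binom{q}{r} \bmod p$, by the definition of binomial coefficients recalled at the start of Section 3. On the right, the coefficient of $x^r$ is the sum of $\prod_i \binom{q_i}{j_i}$ over all tuples $(j_0,\ldots,j_\ell)$ with $0\le j_i\le q_i$ and $\sum_i j_i p^i = r$.

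The key step is uniqueness. Since $0 \le j_i \le q_i \le p-1$, the expression $\sum_i j_i p^i$ is a base-$p$ expansion, and base-$p$ expansions are unique. So at most one tuple contributes, namely $j_i = r_i$ for all $i$, and it is admissible exactly when $r_i \le q_i$ for every $i$. If some $r_i > q_i$, no tuple contributes, so the coefficient is $0$. In that case $\binom{q_i}{r_i}=0$ by the convention $\binom{n}{k}=0$ for $k>n$, so the product $\prod_i\binom{q_i}{r_i}$ is also $0$ and the congruence still holds.

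If $r$ has more digits than $q$, we pad $q$ with leading zero digits. The statement already allows zero digits, so this case is covered as well. The main point needing care is the uniqueness argument together with the $r_i>q_i$ case; everything else is routine.
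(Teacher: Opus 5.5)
Your proof is correct and complete. It is the standard generating-function argument: you expand $(1+x)^q=\prod_i(1+x^{p^i})^{q_i}$ in $\mathbb{F}_p[x]$, compare coefficients of $x^r$, and use uniqueness of base-$p$ expansions to isolate the single contributing tuple.

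The paper gives no proof of this statement. It quotes Lucas's 1878 result and uses it as a black box in Lemmas~\ref{lem:SixB=Sip} and \ref{lem:Sip*Sj=Sip+j}. So the comparison is between a citation and a self-contained derivation. The citation keeps the paper short. Your argument fits naturally with the paper's own definition of $\binom{n}{k}$ as the coefficient of $x^k$ in $(1+x)^n$.

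Your argument also makes explicit a hypothesis the paper's wording omits: $q_i,r_i\in\{0,1,\ldots,p-1\}$. Your uniqueness step depends on this, and it cannot be dropped. For $p=2$, writing $3=0\cdot 2+3$ and $2=1\cdot 2+0$ gives $\binom{0}{1}\binom{3}{0}=0$, whereas $\binom{3}{2}=3\equiv 1 \bmod 2$.

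Your treatment of the case $r_i>q_i$ (both sides vanish) and of padding with leading zero digits is also correct.
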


\begin{example}
Consider $p=3$,  $q=15 $ and $r=7$. The $p$-ary representations of $q$ are $r $ are:
    \begin{align*}
        17&=1\cdot 3^2+2\cdot 3+2\cdot 3^0, \\
        7 &=0\cdot 3^2+2\cdot 3+1\cdot 3^0.
    \end{align*}
Therefore, according to Lucas's Theorem:
$$
\binom{17}{7}\equiv \binom{1}{0}\cdot \binom{2}{2}\cdot \binom{2}{1}\bmod 3 \equiv 2 \bmod 2.
$$
Notice that $\binom{17}{7}=19448$, which is congruent with 2 modulo 3. 
\end{example}

\begin{definition} Let $\{a_n\} $ and $\{b_n\} $  be two $p$-ary sequences.
We define the \textbf{Hadamard product} (\textbf{component-wise product}) of $\{ a_n \} $ and $\{ b_n \} $ as the sequence $\{ a_n \}  \odot \{b_n\}  = \{ a_n\cdot b_n \}$.

%\begin{align*}
%(a_0, a_1, a_2, \ldots, a_{n-1}) \odot (b_0, b_1, b_2, \ldots, b_{n-1}) = (a_0b_0, a_1b_1, a_2b_2, \ldots, a_{n-1}b_{n-1}).
%\end{align*}
\end{definition}

Consider a sequence $\{ a_n \} $ of period $T_1$ and a sequence $\{ b_n \} $ of period $T_2$. Denote by $\{a_0, a_1, a_2, \ldots, a_{T_1-1}\}$ and $\{b_0, b_1, b_2, \ldots, b_{T_2-1}\}$ its first $T_1$ and $T_2$ terms. Take $T' = \lcm(T_1,T_2)$, and define
 \begin{align*}
     \{c_0,c_1,c_2,\ldots,c_{T'-1}\} & = \underbrace{\{a_0,a_1,\ldots,a_{T_1-1},\ a_0,a_1,\ldots,a_{T_1-1},\ \ldots,\ a_0,a_1,\ldots,a_{T_1-1}\}}_{\frac{T'}{T_1} \text{ times}}, \\
   \{d_0,d_1,d_2,\ldots,d_{T'-1}\} & = \underbrace{\{b_0,b_1,\ldots,b_{T_2-1},\ b_0,b_1,\ldots,b_{T_2-1},\ \ldots,\ b_0,b_1,\ldots,b_{T_2-1}\}}_{\frac{T'}{T_2} \text{ times}}.
 \end{align*}
Then the first $T'$ elements of $\{a_n\}  \odot \{ b_n \} $ are given by $\{c_0d_0, c_1d_1, c_2d_2, \ldots, c_{T'-1}d_{T'-1}\}$, and its period $T$ divides $T'$.

\begin{example}
 Consider the binomial sequences over $\mathbb{F}_3$ (see Table~\ref{tab:2}).
 The Hadamard product of $\textbinomseq{n}{4}$ and  $\textbinomseq{n}{7}$ is the sequences  given by:
 \begin{align*}
    \binomseq{n}{4} \odot \binomseq{n}{7} = 
    \{ 000012021  \} \odot \{ 000000012 \} =
    \{ 000000022 \}.
    %= 2 \binomseq{n}{7}
 \end{align*}
\end{example}
%We denote
% \[
% (a_0,a_1,\ldots,a_{T_1-1}) \odot (b_0,b_1,\ldots,b_{T_2-1}) = (c_0d_0, c_1d_1, c_2d_2, \ldots, c_{T'-1}d_{T'-1}).
% \]
% \begin{definition} Let $(a_0,a_1,\ldots,a_n)$ and $(b_0,b_1,\ldots,b_m)$ be two vectors. We define the \textbf{Kronecker product} of these vectors as
% \[ (a_0,a_1,\ldots,a_n) \otimes (b_0,b_1,\ldots,b_m) = (a_0b_0,a_0b_1,\ldots,a_0b_m,a_1b_0,a_1b_1,\ldots,a_1b_m,\ldots,a_nb_0,a_nb_1,\ldots,a_nb_m). \]
% This definition can be easily extended to the Kronecker product of a sequences by a vector of finite size. Let $\{ a_0 \} $ be a sequence and $(b_0, b_1, b_2, \ldots, b_m)$ be a vector of size $m+1$. We define the \textbf{Kronecker product} of $\{a_n\} $ and $(b_0, b_1, b_2, \ldots, b_m)$ as the sequence given by
% \begin{align*}
% \{ a_n \}  & \otimes (b_0, b_1, b_2, \ldots, b_m) \\
% & = \{ (a_0) \odot (b_0, b_1, b_2, \ldots, b_m), (a_1) \odot  (b_0, b_1, b_2, \ldots, b_m), (a_2) \odot  (b_0, b_1, b_2, \ldots, b_m), \ldots \} \\
% & = \{ a_0b_0, a_0b_1, a_0b_2, \ldots, a_0b_m, a_1b_0, a_1b_1, a_1b_2, \ldots, a_1b_m, a_2b_0, a_2b_1, a_2b_2, \ldots, a_2b_m, \ldots \}.
% \end{align*}
% \end{definition}
 
 \begin{definition}  Consider the sequence $\{ a_n \} $ and the terms   $\{b_0, b_1, b_2, \ldots, b_m\}$. We define the \textbf{Kronecker product} of $\{a_n\} $ and $\{b_0, b_1, b_2, \ldots, b_m\}$ as the sequence given by
\begin{align*}
\{ a_n \}  & \otimes \{b_0, b_1, b_2, \ldots, b_m\} \\
& =   \{ a_0b_0, a_0b_1, a_0b_2, \ldots, a_0b_m, a_1b_0, a_1b_1, a_1b_2, \ldots, a_1b_m, a_2b_0, a_2b_1, a_2b_2, \ldots, a_2b_m, \ldots \}.
\end{align*}
\end{definition}

\begin{remark} In order to avoid the use of multiple parentheses, we consider that the Kronecker product precedes the Hadamard product. That is, in the expression $\{ a_n \} \otimes \{ b_n \}_{n \geq 0}^m \odot \{ c_n \}$, we first solve the Kronecker product and then the Hadamard product.
\end{remark}

\begin{definition}
Given a prime number $p$, we define the \textbf{basic $p$-expansion block} $B_p$ as the sequence
\[ B_p = \underbrace{\{11\ldots1\}}_{p \text{ terms}}. \]
Moreover, for $\ell\geq 1$, we have that
\[ B_{p^\ell} = B_{p^{\ell-1}} \otimes B_p = \underbrace{\{11\ldots1\}}_{\text{$p^\ell$ terms}}. \]
\end{definition}

Computing the Kronecker product of a sequence $\{ a_n \} $ by the basic $p$-expansion block $B_p$ produces an expanded version of the original sequence, where each term appears $p$ times consecutively:
\[
\{ a_n \}  \otimes 
\underbrace{\{11\ldots1\}}_{p \text{ terms}}
= 
\{ \underbrace{a_0,a_0,\ldots,a_0}_{p \text{ terms}},
\underbrace{a_1,a_1,\ldots,a_1}_{p \text{ terms}},
\underbrace{a_2,a_2,\ldots,a_2}_{p \text{ terms}},
\ldots \}.
\]

In the following lemma we show that the $p$-repeating expansion of some binomial sequence modulo $p$ is another binomial sequence modulo $p$. Moreover, we know precisely which sequence is.

\begin{lemma}\label{lem:SixB=Sip} Let $p$ be a prime number,  $k$ a positive integer  and $B_p$ the basic $p$-expansion block. Then, 
\[ \binomseq{n}{k} \otimes B_p \equiv \binomseq{n}{kp} \bmod p. \]
\end{lemma}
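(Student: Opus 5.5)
We prove the congruence termwise. By the definition of the Kronecker product with $B_p$, the $n$-th term of $\textbinomseq{n}{k} \otimes B_p$ is $\binom{\lfloor n/p\rfloor}{k}$. So it suffices to show that for every $n \geq 0$,
\[
\binom{n}{kp} \equiv \binom{\lfloor n/p \rfloor}{k} \bmod p .
\]
The convention that the $k$-th binomial sequence has zeros for $n<kp$ is automatic: $\lfloor n/p\rfloor < k$ exactly when $n < kp$, and then both sides vanish.

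To prove the displayed congruence, write $n = qp + n_0$ with $0 \leq n_0 \leq p-1$ and $q = \lfloor n/p \rfloor$. Note that $kp = kp + 0$, so the least significant $p$-ary digit of $kp$ is $0$. The higher digits of $kp$ are precisely the $p$-ary digits of $k$, and the higher digits of $n$ are precisely those of $q$. Pad both expansions with leading zeros to a common length $\ell$.

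Lucas's Theorem then gives
\[
\binom{n}{kp} \equiv \binom{n_0}{0}\prod_{i\ge 1}\binom{q_{i-1}}{k_{i-1}} \bmod p ,
\]
where $q_j$ and $k_j$ denote the $p$-ary digits of $q$ and $k$. Since $\binom{n_0}{0}=1$, applying Lucas's Theorem once more to the remaining product gives exactly $\binom{q}{k} \bmod p$. This establishes the equality of the two sequences term by term. The Lucas-free alternative would use $(1+x)^{qp+n_0} \equiv (1+x^p)^q(1+x)^{n_0} \bmod p$ and compare the coefficients of $x^{kp}$. That works because $n_0<p$, so the factor $(1+x)^{n_0}$ contributes only its constant term to exponents that are multiples of $p$.

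The only point needing care is the bookkeeping of digit indices: the digits of $kp$ are those of $k$ shifted up one place with a $0$ appended at the bottom. One must also note that Lucas's Theorem correctly yields $0$ whenever some digit $k_i$ exceeds $q_i$, which covers all the vanishing cases, including $n<kp$. Otherwise the argument is a direct application of the stated theorem.
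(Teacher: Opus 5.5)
Your proof is correct and is essentially the paper's argument. The paper likewise reduces the claim to $\binom{np+j}{kp}\equiv\binom{n}{k} \bmod p$ for $0\le j\le p-1$ (your $q$ and $n_0$ play the roles of $n$ and $j$) and applies Lucas's Theorem to the expansions shifted up one digit, where the extra factor $\binom{j}{0}=1$ drops out; your handling of the vanishing cases and the remark via $(1+x)^{qp+n_0}\equiv(1+x^p)^q(1+x)^{n_0}$ are correct but not needed.
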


\begin{proof} Observe that
\[
\binomseq{n}{k} \otimes B_p =
\bigg\{
\underbrace{\binom{0}{k},\ldots,\binom{0}{k}}_{\text{$p$ terms}},
\underbrace{\binom{1}{k},\ldots,\binom{1}{k}}_{\text{$p$ terms}},
\underbrace{\binom{2}{k},\ldots,\binom{2}{k}}_{\text{$p$ terms}},
\ldots 
\bigg\}.
\]
We want to prove that 
\begin{align*}
& \bigg\{
\resizebox{0.6\linewidth}{!}{$
    \underbrace{\binom{0}{k},\binom{0}{k},\ldots,\binom{0}{k}}_{\text{$p$ terms}},
    \underbrace{\binom{1}{k},\binom{1}{k},\ldots,\binom{1}{k}}_{\text{$p$ terms}},
    \underbrace{\binom{2}{k},\binom{2}{k},\ldots,\binom{2}{k}}_{\text{$p$ terms}},
    \ldots 
$}
\bigg\}
\\
& \equiv
\bigg\{
\resizebox{0.8\linewidth}{!}{$
    \underbrace{\binom{ 0}{kp},\binom{   1}{kp},\ldots,\binom{ p-1}{kp}}_{\text{$p$ terms}},
    \underbrace{\binom{ p}{kp},\binom{ p+1}{kp},\ldots,\binom{2p-1}{kp}}_{\text{$p$ terms}},
    \underbrace{\binom{2p}{kp},\binom{2p+1}{kp},\ldots,\binom{3p-1}{kp}}_{\text{$p$ terms}},
    \ldots 
$}
\bigg\} \bmod p.
\end{align*}
 
That is, we need to prove that for all $j \in \{ 0,1,2,\ldots,p-1 \}$ the following congruence holds:
\[
\binom{np+j}{kp} \equiv \binom{n}{k} \bmod p.
\]

Consider the $p$-ary expansions of $n$ and $k$, i.e.,
\begin{align*}
    n & = n_\ell p^\ell + n_{\ell-1} p^{\ell-1} + \cdots + n_1 p + n_0, \\
    k & = k_\ell p^\ell + k_{\ell-1} p^{\ell-1} + \cdots + k_1 p + k_0. 
\end{align*}
with $n_i, k_i \in \{ 0,1,\ldots,p-1 \}$ and either $n_{\ell} \neq 0$ or $k_\ell \neq 0$. Then, $np+j$ and $kp$ are 
\begin{align*}
    np + j & = n_\ell p^{\ell+1} + n_{\ell-1} p^\ell + \cdots + n_1 p^2 + n_0 p + j, \\
    kp & = k_\ell p^{\ell+1} + k_{\ell-1} p^\ell + \cdots + k_1 p^2 + k_0 p + 0,
\end{align*}
so  Lucas's theorem implies the congruences
\[
\binom{np+j}{kp}
\equiv \prod_{i=0}^\ell \binom{n_i}{k_i} \cdot \binom{j}{0} \equiv \prod_{i=0}^\ell \binom{n_i}{k_i} \equiv \binom{n}{k} \bmod p.
\]    
\end{proof}
 
\begin{example}
Consider again the binomial sequences over $\mathbb{F}_3$.
The sequence $\textbinomseq{n}{18}=\textbinomseq{n}{6\cdot 3}$ can be generated through the Kronecker product of $\textbinomseq{n}{6}$ and $B_3=\{111\}$.
Let us generate the first period of $\textbinomseq{n}{18}$:
    \begin{align*}
           \binomseq{n}{18} =\binomseq{n}{6} \otimes B_3 & =  \{ 000000111 \}  \otimes \{1 1 1\}  \\
           &=
    \{ 000  |   000   |   000   |   000   |   000  |   000  |   111  |   111  |   111 \}.
    \end{align*}
\end{example}
Notice that there is no restriction for $k$ in Theorem~\ref{lem:SixB=Sip}, i.e., $k$ can also be a multiple of $p$.

The next lemma shows that in some cases the Hadamard product of two binomial sequences is again a binomial sequence.

\begin{lemma}\label{lem:Sip*Sj=Sip+j} Let $p^\ell$ be the power of a prime number, $m$ be an positive integer and $j \in \{ 0,1,\ldots,p^\ell-1 \}$. Then
%\[ S_{ip^m} \odot S_j = S_{ip^m+j}. \]
\[ \binomseq{n}{mp^\ell+j}  = \binomseq{n}{m p^\ell} \odot \binomseq{n}{j}. \]
\end{lemma}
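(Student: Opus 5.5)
The plan is to prove the identity termwise, modulo $p$, using Lucas's Theorem. Since all three sequences are defined for every $n \geq 0$, it suffices to show that for each $n \geq 0$
\[
\binom{n}{mp^\ell+j} \equiv \binom{n}{mp^\ell}\binom{n}{j} \bmod p .
\]
The definition of the $k$-th binomial sequence sets $s_n = 0$ for $n<k$, which agrees with the convention $\binom{n}{k}=0$. So no separate treatment of initial zeros is needed, provided Lucas's Theorem is applied with that convention.

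The key step is to write $n = n' p^\ell + t$ with $0 \leq t < p^\ell$, i.e.\ to split the $p$-ary expansion of $n$ into its lowest $\ell$ digits $n_0,\ldots,n_{\ell-1}$ (those of $t$) and its higher digits (those of $n'$). Because $0 \leq j < p^\ell$, the $p$-ary expansion of $mp^\ell + j$ has lowest $\ell$ digits equal to those of $j$ and higher digits equal to those of $m$. Applying Lucas's Theorem and grouping the digit factors gives
\[
\binom{n}{mp^\ell+j} \equiv \Bigl(\prod_{i\geq \ell}\binom{n_i}{m_{i-\ell}}\Bigr)\Bigl(\prod_{i<\ell}\binom{n_i}{j_i}\Bigr) \bmod p .
\]

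Next I identify each factor. The expansion of $mp^\ell$ has the same high digits and zero low digits, so by Lucas the first product is $\binom{n}{mp^\ell}$ modulo $p$, since the low factors are $\binom{n_i}{0}=1$. The expansion of $j$ has zero high digits, so the second product is $\binom{n}{j}$ modulo $p$, since the high factors are $\binom{n_i}{0}=1$. Multiplying yields the desired congruence.

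An alternative route for the first factor iterates Lemma~\ref{lem:SixB=Sip}: $\binomseq{n}{mp^\ell} \equiv \binomseq{n}{m}\otimes B_{p^\ell}$. This gives $\binom{n}{mp^\ell}\equiv\binom{\lfloor n/p^\ell\rfloor}{m}$, matching the high-digit product.

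The main obstacle is bookkeeping rather than substance. One must pad expansions to a common length and check that Lucas's Theorem still holds when some digit $n_i < k_i$, where the factor is $0$; this is exactly what makes the sequences vanish for $n < k$. One must also note that the hypothesis $j < p^\ell$ is used essentially: without it there would be a carry from $j$ into the high digits and the factorization would fail.
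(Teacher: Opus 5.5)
Your proposal is correct and follows essentially the paper's argument. Both prove the congruence termwise by Lucas's Theorem, using $j<p^\ell$ to see that the $p$-ary digits of $mp^\ell+j$ are those of $m$ followed by those of $j$, then identify the high-digit product with $\binom{n}{mp^\ell}$ and the low-digit product with $\binom{n}{j}$. The only difference is cosmetic: you pad all expansions to a common length at once, whereas the paper splits into the cases $n<p^{\ell+r+1}$ and $n\geq p^{\ell+r+1}$.
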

\begin{proof} 
Notice that the right-hand side of the equality is
\[
\binomseq{n}{m p^\ell} \odot \binomseq{n}{j} = \left\{\!\binom{n}{mp^\ell} \binom{n}{j}\!\right\},
\]
so, we need to prove that for all $n \geq 0$ the following congruence holds: \begin{equation}\label{eq:congruence_to_prove_Sip*Sj=Sip+j}
\binom{n}{mp^\ell+j} \equiv \binom{n}{mp^\ell} \binom{n}{j} \bmod p.
\end{equation}
Let us consider the $p$-ary expansions of $m$ and $j$, i.e.,
\begin{align*}
    m & = m_r p^r + m_{r-1} p^{r-1} + \cdots + m_1 p + m_0, \\
    j & = j_{\ell-1} p^{\ell-1} + j_{\ell-2} p^{\ell-2} + \cdots + j_1 p + j_0.
\end{align*}
Thus, the $p$-ary expansion of $mp^\ell + j$ is
\begin{align*}
    mp^\ell + j & = m_r p^{\ell+r} + m_{r-1} p^{\ell+r-1} + \cdots + m_1 p^{\ell+1} + m_0 p^\ell + j_{\ell-1} p^{\ell-1} +
    %j_{\ell-2} p^{\ell-2} + 
    \cdots + j_1 p + j_0,
\end{align*}
We have to study two cases: first, we consider the case that $n < p^{\ell+r+1}$. We can write
\[ n = n_{\ell+r} p^{\ell+r} + n_{\ell+r-1} p^{\ell+r-1} + \cdots + n_1 p + n_0,
\]
with the $n_i$ not necessarily different from $0$, for all $i \in \{ 0,1,\ldots,\ell+r \}$, so in this case Lucas's theorem implies the congruence
\begin{equation}\label{eq:congruence_to_prove_Sip*Sj=Sip+j_1}
    \binom{n}{mp^\ell+j} \equiv \prod_{i=\ell}^{\ell+r} \binom{n_i}{m_{i-\ell}} \cdot \prod_{i=0}^{\ell-1} \binom{n_\ell}{j_\ell} \bmod p.
\end{equation}
Also,  Lucas's theorem implies the congruences
\begin{equation}\label{eq:congruence_to_prove_Sip*Sj=Sip+j_2}
    \binom{n}{mp^\ell} \equiv \prod_{i=\ell}^{\ell+r} \binom{n_i}{m_{i-\ell}} \cdot \prod_{i=0}^{\ell-1} \binom{n_i}{0} \equiv \prod_{i=\ell}^{\ell+r} \binom{n_i}{m_{i-\ell}} \bmod p,
\end{equation}
and
\begin{equation}\label{eq:congruence_to_prove_Sip*Sj=Sip+j_3}
    \binom{n}{j} \equiv \prod_{i=\ell}^{\ell+r} \binom{n_i}{0} \cdot \prod_{i=0}^{\ell-1} \binom{n_i}{j_i} \equiv \prod_{i=0}^{\ell-1} \binom{n_i}{j_i} \bmod p. 
\end{equation}
Therefore, congruences \eqref{eq:congruence_to_prove_Sip*Sj=Sip+j_1}, \eqref{eq:congruence_to_prove_Sip*Sj=Sip+j_2} and \eqref{eq:congruence_to_prove_Sip*Sj=Sip+j_3} imply congruence \eqref{eq:congruence_to_prove_Sip*Sj=Sip+j}.

Consider now the case $n \geq p^{r+\ell+1}$. Then, there is an integer $s \geq r+\ell+1$ such that the $p$-ary expansion of $n$ is of the form
\[ n = n_s p^s + \cdots + n_{\ell+r+1} p^{\ell+r+1} + n_{\ell+r} p^{\ell+r} + n_{\ell+r-1} p^{\ell+r-1} + \cdots + n_1 p + n_0, \]
with $n_s \neq 0$. Then,  Lucas's theorem implies the congruences
\begin{align}
    \label{eq:congruence_to_prove_Sip*Sj=Sip+j_4}
    \binom{n}{mp^\ell+j} & \equiv \prod_{i=\ell+r+1}^s \binom{n_i}{0} \prod_{i=\ell}^{\ell+r} \binom{n_i}{m_{i-\ell}} \cdot \prod_{i=0}^{\ell-1} \binom{n_\ell}{j_\ell} \equiv \prod_{i=\ell}^{\ell+r} \binom{n_i}{m_{i-\ell}} \cdot \prod_{i=0}^{\ell-1} \binom{n_\ell}{j_\ell} \bmod p. \\
    \label{eq:congruence_to_prove_Sip*Sj=Sip+j_5}
    \binom{n}{mp^\ell} & \equiv \prod_{i=\ell+r+1}^s \binom{n_i}{0} \prod_{i=\ell}^{\ell+r} \binom{n_i}{m_{i-\ell}} \cdot \prod_{i=0}^{\ell-1} \binom{n_i}{0} \equiv \prod_{i=\ell}^{\ell+r} \binom{n_i}{m_{i-\ell}} \bmod p, \\
    \label{eq:congruence_to_prove_Sip*Sj=Sip+j_6}
    \binom{n}{j} & \equiv \prod_{i=\ell+r+1}^s \binom{n_i}{0} \prod_{i=\ell}^{\ell+r} \binom{n_i}{0} \cdot \prod_{i=0}^{\ell-1} \binom{n_i}{j_i} \equiv \prod_{i=0}^{\ell-1} \binom{n_i}{j_i} \bmod p. 
\end{align}
Hence, congruences \eqref{eq:congruence_to_prove_Sip*Sj=Sip+j_4}, \eqref{eq:congruence_to_prove_Sip*Sj=Sip+j_5} and \eqref{eq:congruence_to_prove_Sip*Sj=Sip+j_6} imply congruence \eqref{eq:congruence_to_prove_Sip*Sj=Sip+j}, as we wanted to show.
\end{proof}
 
\begin{example}
Consider again the field $\mathbb{F}_3$.
The sequence
$\textbinomseq{n}{24} = \textbinomseq{n}{2\cdot 3^2+6}$ can be computed through the Hadamard product of $\textbinomseq{n}{18}$ and $\textbinomseq{n}{6}$. Let us compute the first period of $\textbinomseq{n}{24} = \textbinomseq{n}{2\cdot 3^2+6}$:
\begin{align*}   
    \binomseq{n}{24} = & \binomseq{n}{18} \odot \binomseq{n}{6} \\
=&\ \{ 000000000000000000111111111 
\} \odot
\{000000111000000111000000111 \} \\
 =&\ \{000000000000000000000000111 \}.
    \end{align*}
\end{example}
 
As a consequence of Lemmas \ref{lem:SixB=Sip} and \ref{lem:Sip*Sj=Sip+j}, we can deduce the following decomposition of the binomial sequences.

\begin{theorem}\label{thm:char_binomial_sequences} Let $k = k_\ell p^\ell + k_{\ell-1} p^{\ell-1} + \cdots + k_1 p + k_0$ be the $p$-ary expansion of $k$, with $k_i \in \{ 0,1,\ldots,p-1\}$. The $k$-th binomial sequence modulo $p$ is given by
%\[
%S_i = S_{i_k} \otimes B_p^k \odot S_{i_{k-1}} \otimes B_p^{k-1} \odot \cdots \odot S_{i_1} \otimes B_p \odot S_{i_0}.
%\]
\[
\binomseq{n}{k} = 
\binomseq{n}{k_\ell} \otimes B_{p^\ell} \odot
\binomseq{n}{k_{\ell-1}} \otimes B_{p^{\ell-1}} \odot
\cdots \odot 
\binomseq{n}{k_1} \otimes B_{p} \odot
\binomseq{n}{k_0} \otimes B_{p^0}.
\]
\end{theorem}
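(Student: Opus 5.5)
The plan is to argue by induction on $\ell$, the number of $p$-ary digits of $k$ beyond the first. We use Lemma~\ref{lem:SixB=Sip} to handle the top digits and Lemma~\ref{lem:Sip*Sj=Sip+j} to peel off digits. First we record that iterating Lemma~\ref{lem:SixB=Sip} gives
\[
\binomseq{n}{t}\otimes B_{p^i}\equiv\binomseq{n}{tp^i}\bmod p
\]
for every $t\geq 0$ and $i\geq 0$. This follows by induction on $i$ using $B_{p^i}=B_{p^{i-1}}\otimes B_p$. We also need associativity of the sequence-by-block Kronecker product, $(\{a_n\}\otimes B_{p^{i-1}})\otimes B_p=\{a_n\}\otimes(B_{p^{i-1}}\otimes B_p)$, which is immediate from the definition since both sides repeat each $a_n$ exactly $p^i$ times. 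The case $t=0$ is trivial, since $\textbinomseq{n}{0}$ is the all-ones sequence and expanding it changes nothing. So each factor $\textbinomseq{n}{k_i}\otimes B_{p^i}$ in the statement equals $\textbinomseq{n}{k_ip^i}$.

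The statement therefore reduces to proving
\[
\binomseq{n}{k}=\binomseq{n}{k_\ell p^\ell}\odot\binomseq{n}{k_{\ell-1}p^{\ell-1}}\odot\cdots\odot\binomseq{n}{k_0}.
\]
We prove this by induction on $\ell$; the case $\ell=0$ is a tautology. For the inductive step, write $k=k_\ell p^\ell+j$ with $j=k_{\ell-1}p^{\ell-1}+\cdots+k_0<p^\ell$.

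If $k_\ell\geq 1$, Lemma~\ref{lem:Sip*Sj=Sip+j} with $m=k_\ell$ gives $\textbinomseq{n}{k}=\textbinomseq{n}{k_\ell p^\ell}\odot\textbinomseq{n}{j}$. The induction hypothesis applied to $j$, which has at most $\ell$ digits, then expands $\textbinomseq{n}{j}$ as the Hadamard product of the remaining factors. If $k_\ell=0$, the leading factor is the all-ones sequence $\textbinomseq{n}{0}$, which is the identity for $\odot$, and we apply the hypothesis to $k=j$ directly. Associativity and commutativity of $\odot$ are clear, since it is componentwise multiplication.

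The main obstacle is bookkeeping rather than mathematics. One point is the zero-digit cases: Lemma~\ref{lem:Sip*Sj=Sip+j} requires $m$ positive, and intermediate digits $k_i=0$ must contribute harmless all-ones factors. The other is matching the paper's precedence convention, under which Kronecker is computed before Hadamard, so that the displayed expression is parsed as the Hadamard product of the expanded factors. A cleaner alternative is to avoid the lemmas entirely and apply Lucas's theorem directly. The $n$-th term of $\textbinomseq{n}{k_i}\otimes B_{p^i}$ is $\binom{\lfloor n/p^i\rfloor}{k_i}$, which by Lucas (or just the digit $n_i$ plus higher digits) reduces to $\binom{n_i}{k_i}$ times factors $\binom{n_s}{0}=1$ when $k_i<p$. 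The product over $i$ is then $\prod_i\binom{n_i}{k_i}\equiv\binom{n}{k}$. I would mention this as a one-line check of the induction.
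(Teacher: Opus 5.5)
Your proposal is correct and follows the paper's proof: you peel off the leading digit with Lemma~\ref{lem:Sip*Sj=Sip+j}, rewrite $\textbinomseq{n}{k_\ell p^\ell}$ as $\textbinomseq{n}{k_\ell}\otimes B_{p^\ell}$ by iterating Lemma~\ref{lem:SixB=Sip}, and repeat on the remaining digits. Your explicit induction on $\ell$ and your handling of zero digits (which the two lemmas, stated for positive $k$ and $m$, do not literally cover) make the paper's ``reiterating the process'' rigorous, and your closing direct Lucas check is a valid one-line alternative.
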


\begin{proof} %Consequence of Lemma \ref{lem:SixB=Sip} and Lemma \ref{lem:Sip*Sj=Sip+j}. 
On one hand, Lemma~\ref{lem:Sip*Sj=Sip+j} implies that 
\begin{align*}  
\binomseq{n}{k} = & \binomseq{n}{k_\ell p^\ell + k_{\ell-1} p^{\ell-1} + \cdots + k_1 p + k_0}\\
= &
\binomseq{n}{k_\ell p^\ell} \odot \binomseq{n}{k_{\ell-1} p^{\ell-1} + \cdots + k_1 p + k_0}.
\end{align*}
On the other hand, we can apply Lemma~\ref{lem:SixB=Sip} repeatedly to deduce that
\[
\binomseq{n}{k_\ell p^\ell} = \binomseq{n}{k_\ell p^{\ell-1}} \otimes B_p 
%= \binomseq{n}{k_\ell p^{\ell-2}} \otimes B_p \otimes B_p 
= \cdots = \binomseq{n}{k_\ell} \otimes \underbrace{B_p \otimes \cdots \otimes B_p}_{\ell \text{ times}} = \binomseq{n}{k_\ell} \otimes B_{p^\ell},
\]
so from both equalities we conclude that
\[
\binomseq{n}{k} = 
\binomseq{n}{k_\ell} \otimes B_{p^\ell} \odot \binomseq{n}{k_{\ell-1} p^{\ell-1} + \cdots + k_1 p + k_0}.
\]
Reiterating the process with $\textbinomseq{n}{k_{\ell-1} p^{\ell-1} + \cdots + k_1 p + k_0}$ yields the result.
\end{proof}

From now on, we suppress the last term $B_{p^0}$, since any sequence, when multiplied by it via the Kronecker product, remains unchanged.
 
As  a consequence of Theorem~\ref{thm:char_binomial_sequences}, any binomial sequence can be constructed from the first $p$ binomial sequences $\textbinomseq{n}{t}$, for $t=0,1,\ldots,p-1$ called \textbf{$p$-ary basic binomial sequences}.
% \begin{align*}
%     \binomseq{n}{0} & = \left\lbrace\!\binom{0}{0},\binom{1}{0},\binom{2}{0},\binom{3}{0},\ldots \right\} = \{ 1\ 1\ 1\ 1\ \ldots\,\}, \\
%     \binomseq{n}{1} & = \left\lbrace\!\binom{0}{1},\binom{1}{1},\binom{2}{1},\binom{3}{1},\ldots \right\} = \{ 0\ 1\ 2\ 3\ \ldots\,\}, \\
%     & \ \vdots \\
%     \binomseq{n}{p-1} & = \left\lbrace\!\binom{0}{p-1},\binom{1}{p-1},\binom{2}{p-1},\binom{3}{p-1},\ldots \right\}. \\
% \end{align*}
%We call these  sequences the \textbf{$p$-ary basic binomial sequences}. Then, Theorem \ref{thm:char_binomial_sequences} states 
Therefore, any binomial sequence can be generated by computing the Hadamard and Kronecker products of these basic binomial sequences and the basic $p$-expansion block.

%\begin{corollary} The $i$-th binomial sequence modulo $p$ is of period $p^{\left\lfloor \log_p (i) \right\rfloor + 1}$.    
%\end{corollary}

\begin{example} Over $\mathbb{F}_3$, all the binomial sequences can be generated with $\textbinomseq{n}{0}$, $\textbinomseq{n}{1}$ and $\textbinomseq{n}{2}$. For instance, the first binomial sequences are generated as:
\begin{align*}
 3=1\cdot 3^1+0\cdot 3^0 \Rightarrow \textbinomseq{n}{3} & =  \textbinomseq{n}{1} \otimes B_3 \odot \textbinomseq{n}{0} 
\\
& = \textstyle\{0 1 2\ldots \}\otimes \{1 1 1\}\odot \textstyle\{111\ldots \}=
\{0 0 0 1 1 1 2 2 2 \ldots \}.  
\\\\
 4=1\cdot 3^1+1\cdot 3^0 \Rightarrow \textbinomseq{n}{4} & = \textbinomseq{n}{1}  \otimes B_3 \odot \textbinomseq{n}{1}
\\
& = \{ 0 1 2 \ldots \} \otimes \{ 1 1 1 \} \odot \{ 0 1 2 \ldots \} = \{ 0 0 0 1 1 1 2 2 2 \ldots \} \odot \{ 0 1 2 \ldots \}
\\
& =  \{ 0 0 0 0 1 2 0 2 1 \ldots \}.
\\\\
 5=1\cdot 3^1+2\cdot 3^0 \Rightarrow \textbinomseq{n}{5} & = \textbinomseq{n}{1}  \otimes B_3 \odot \textbinomseq{n}{2} 
\\
& = \{ 0 1 2 \ldots \} \otimes \{ 1 1 1 \} \odot \{ 0 0 1 \ldots \} = \{ 0 0 0 1 1 1 2 2 2 \ldots \} \odot \{ 0 0 1 \ldots \}
\\
& =  \{ 0 0 0 0 0 1 0 0 2 \ldots \}.
\\\\
 6=2\cdot 3^1+0\cdot 3^0 \Rightarrow \textbinomseq{n}{6} & = \textbinomseq{n}{2}  \otimes B_3
\\
& = \{ 0 0 1 \ldots \} \otimes \{ 1 1 1 \} = \{ 0 0 0 0 0 0 1 1 1 \ldots \}
\\\\
 7=2\cdot 3^1+1\cdot 3^0 \Rightarrow \textbinomseq{n}{7} & = \textbinomseq{n}{2}  \otimes B_3 \odot \textbinomseq{n}{1} 
\\
& = \{ 0 0 1 \ldots \} \otimes \{ 1 1 1 \} \odot \{ 0 1 2 \ldots \} = \{ 0 0 0 0 0 0 1 1 1 \ldots \} \odot \{ 0 1 2 \ldots \} 
\\
& =  \{ 0 0 0 0 0 0 0 1 2 \ldots \}. 
\\\\
 8=2\cdot 3^1+2\cdot 3^0 \Rightarrow \textbinomseq{n}{8} & =\textbinomseq{n}{2} \otimes B_3 \odot \textbinomseq{n}{2} 
\\
& = \{ 0 0 1 \ldots \} \otimes \{ 1 1 1 \} \odot \{ 0 0 1 \ldots \} = \{ 0 0 0 0 0 0 1 1 1 \ldots \} \odot \{ 0 0 1 \ldots \} 
\\
& = \{ 0 0 0 0 0 0 0 0 1 \ldots \}.
\\
\end{align*}
\end{example}

%\begin{remark}
Using the decomposition of Theorem \ref{thm:char_binomial_sequences}, we can compute easily the binomial sequences of the form $\big\{\!\binom{n}{p^L}\!\big\}$, since $\textbinomseq{n}{p^L} = \textbinomseq{n}{1} \otimes B_{p^L}$, so its first period is of the form
    \[ \{\underbrace{0\ 0\ \ldots\ 0}_{p^L \text{ times}}\ \underbrace{1\ 1\ \ldots\ 1}_{p^L \text{ times}} \ \underbrace{2\ 2\ \ldots\ 2}_{p^L \text{ times}}\ \underbrace{3\ 3\ \ldots\ 3}_{p^L \text{ times}} \ldots \  \underbrace{p-1\ p-1\ \ldots\ p-1}_{p^L \text{ times}}\}. \]
% \end{remark}

Another immediate consequence of Theorem~\ref{thm:char_binomial_sequences}, together with the previous remark, are the results given in the remarks of subsection~\ref{sub:seq3_5}, %Theorems~\ref{th:bin_3}, \ref{th:bin_5}, 
and \ref{th:bin_p}. Note that Theorem~\ref{th:bin_p} can be reformulated in terms of Theorem~\ref{thm:char_binomial_sequences}, as follows. 
%  \begin{theorem} 

 Let $L, r$ be non-negative integers. 
 For $j \in \{1,2,\ldots, p-1\}$, consider $\textbinomseq{n}{j} = (s^{(j)}_0,s^{(j)}_1,\ldots, s^{(j)}_{p-1})$. We have that:
% The sequences $\textbinomseq{n}{k}$, where $k=p^L+r$ and $0 \leq r<(p-1)\cdot p^L$ have the following forms:
\bigskip
  \begin{enumerate}[(a),topsep=0pt,partopsep=0pt,parsep=0pt,itemsep=0pt]
      \item For the multiples of the power of $p$, 
  \begin{equation*}
  \binomseq{n}{j \cdot p^L}=\left\{\underbrace{s^{(j)}_0\ldots s^{(j)}_0}_{p^{L}}, \underbrace{s^{(j)}_1\ldots s^{(j)}_1}_{p^{L}}, \ldots, \underbrace{s^{(j)}_{p-1}\ldots s^{(j)}_{p-1}}_{p^{L}}\right\} 
  =
  \binomseq{n}{j } \otimes B_{p^L}.
  \end{equation*}
 % for any $j \in \{1,2,\ldots, p-1\}$.
  \item For $i=1,2, \ldots L$ and $p^{i-1}\leq r<p^{i}$,
  \begin{align*}
  \binomseq{n}{\textcolor{blue}{j}\cdot p^L+r}
  & =
  \left\{\underbrace{\textcolor{blue}{s^{(j)}_0} \cdot \binomseq{n}{r}}_{p^{L-i}}, \underbrace{\textcolor{blue}{s^{(j)}_1} \cdot \binomseq{n}{r}}_{p^{L-i}}, \ldots, \underbrace{\textcolor{blue}{s^{(j)}_{p-1}} \cdot \binomseq{n}{r}}_{p^{L-i}}\right\} \\
  & = 
  \binomseq{n}{j} \otimes B_{p^L} \odot \binomseq{n}{r}.
   % \text{ if } \ 5^{i-1}\leq r<5^{i}  \text{ for } i=1,2, \ldots L\\
   \end{align*}
 %for any $j \in \{1,2,\ldots, p-1\}$.
    \end{enumerate}
%\end{theorem}

We conclude this section with a result which shows how to write a sequence with period $p^L$ as a combination of the first $p$ binomial sequences.
\begin{theorem}
    Let $\{s_n\}$ be a sequence with binomial representation $\sum_{k=0}^{p^L-1}c_k \textbinomseq{n}{k}$, and let $k=k_\ell p^\ell+ \cdots+ k_1 p+k_0p^0$ be the $p$-ary representation  of $k$.
    The sequence $\{s_n\}$ can be written as:
    \begin{align*}
       \sum_{k=0}^{p^L-1}c_k\binomseq{n}{k}&=
     \sum_{k=0}^{p^L-1}
     c_k\left(
    \bigodot_{i=0}^{i=\ell}
    \binomseq{n}{k_i}\otimes B_{p^i}\right).\\ 
  %  &=\bigodot_{k=0}^{p^l-1 }\sum_{i=0}^{p-1}c_k\binomseq{n}{k_{i}}\otimes B_{p^i}
    \end{align*}
%  where the $p$-ary expansion of $k=0,1, \ldots, p-1$ is given by $k=k_{p^\ell}p^{\ell}+\ldots + j_{1}p+j_0$.
\end{theorem}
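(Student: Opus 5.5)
The statement is a termwise consequence of Theorem~\ref{thm:char_binomial_sequences}. I will apply that theorem to each basis element in the binomial representation and then use linearity of the coefficientwise sum. First, Corollary~\ref{cor:vector_space_sequences} guarantees that any $\{s_n\}$ of period $p^\ell$ with $\ell\le L$ has a unique representation $\sum_{k=0}^{p^L-1}c_k\textbinomseq{n}{k}$. Coefficients $c_k$ with $k\ge LC$ vanish, by Corollary~\ref{cor:repr}, so the sum over $0\le k<p^L$ is harmless. It therefore suffices to prove the equality of sequences
\[
\binomseq{n}{k}=\bigodot_{i=0}^{\ell}\binomseq{n}{k_i}\otimes B_{p^i}
\]
for every fixed $k$ with $0\le k<p^L$, where $k=\sum_{i=0}^{\ell}k_ip^i$ is its $p$-ary expansion. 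Multiplying by $c_k$ and summing over $k$ then gives the claim, since sums and scalar multiples of sequences are taken termwise in $V(\mathbb{F}_p)$.

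For the fixed $k$, the identity is exactly Theorem~\ref{thm:char_binomial_sequences}, with the product written in $\bigodot$ notation. I only need to observe that the Hadamard product is associative and commutative, being termwise multiplication in $\mathbb{F}_p$. This makes the iterated product $\bigodot_{i=0}^{\ell}$ well defined and independent of the order of the factors. Under the convention that $\otimes$ binds before $\odot$, each factor is the expanded sequence $\textbinomseq{n}{k_i}\otimes B_{p^i}$, and the $i=0$ factor is $\textbinomseq{n}{k_0}\otimes B_{p^0}=\textbinomseq{n}{k_0}$.

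The one point needing care is that $\ell$ depends on $k$. For small $k$ the expansion is shorter than $L$ digits. Alternatively one may pad with leading zeros so that every $k$ uses digits $k_0,\dots,k_{L-1}$. The extra factors are then $\textbinomseq{n}{0}\otimes B_{p^i}=\{1,1,\dots\}$, the all-ones sequence, which is the identity for $\odot$. So both readings of the formula agree, and I would record this in one line.

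I do not expect a real obstacle. The only thing to check carefully is that the equalities are genuine equalities of infinite sequences over $\mathbb{F}_p$, not just of the first $p^L$ terms. This holds because Lemmas~\ref{lem:SixB=Sip} and \ref{lem:Sip*Sj=Sip+j}, and hence Theorem~\ref{thm:char_binomial_sequences}, were proved for all $n\ge 0$ via Lucas's theorem.
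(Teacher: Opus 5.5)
Your proposal is correct and takes the same route as the paper. The paper gives no separate proof and treats the statement as an immediate termwise consequence of Theorem~\ref{thm:char_binomial_sequences}, combined with linearity of sums and scalar multiples in $V(\mathbb{F}_p)$. Your remarks on padding short expansions with the $\odot$-identity $\textbinomseq{n}{0}\otimes B_{p^i}$, and on the underlying lemmas holding for all $n\geq 0$, supply exactly the details the paper leaves implicit; the appeals to Corollaries~\ref{cor:vector_space_sequences} and~\ref{cor:repr} are harmless but unnecessary, since the binomial representation is given by hypothesis.
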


\section{A Horner-like algorithm for binomial sequences}

In this section, we introduce an algorithm, based on the result given in Theorem~\ref{thm:char_binomial_sequences}, to compute any binomial sequence over $\mathbb{F}_p$ as a combination of the first $p$ binomial sequences, using the Hadamard product, the Kronecker product, and a basic block expansion.
This algorithm makes use of Horner's rule, which we will explain in the following.

Horner's rule~\cite{Horner1819} (see also \cite[pag. 486]{Knuth1997}) is an algorithm to evaluate an univariate polynomial
\[ p(x) = a_\ell x^\ell + a_{\ell-1} x^{\ell-1} + \cdots + a_1 x + a_0, \ \ \ a_\ell \neq 0, \]
efficiently. It is based on the idea of rewriting this polynomial as
\[ p(x) = \left( \left( (a_\ell x + a_{\ell-1}) x + \cdots \right) x + a_1 \right) x + a_0. \]
Horner's rule works as follows:
\begin{center}
    \begin{minipage}{0.9\textwidth}
        \begin{algorithm}[H]
            \caption{Horner's rule}\label{alg:three2}
            \KwData{A polynomial $p(x) = a_\ell x^\ell + \cdots + a_1 x + a_0$ and a point $A$ to evaluate $p(x)$}
            \KwResult{The evaluation $p(A)$}
            $R \gets a_\ell$\;
            \For{$i \gets \ell-1$ \KwTo $0$}{
                $R \gets R \cdot A$ \\                
                $R \gets R + a_i$
            }
            \Return $R$\;
        \end{algorithm}
    \end{minipage}
\end{center}

Evaluating $p(x)$ using Horner's rule requires only $\ell$ additions and $\ell \in \mathcal{O}(\ell)$ multiplications instead of the $\sum_{i=1}^\ell i = \frac{\ell(\ell+1)}{2} \in \mathcal{O}(\ell^2)$ multiplications we need if each monomial of $p(x)$ is evaluated individually and powers are computed by repeated multiplication. Notice that this is the same principle used in the binary exponentiation, also known as \emph{square-and-multiply} method  \cite[Section 14.6]{Menezes1996bk}
%\cite[pag. 614]{Knuth1997}\todomiguel{Mirar cita Menezes en qué apéndice está y quitar esta referencia}: 
to compute $a^k$ we consider the binary expansion of $k$
\[ k = k_\ell \cdot 2^\ell + k_{\ell-1} \cdot 2^{\ell-1} + \cdots + k_1 \cdot 2 + k_0, \ \ \ k
_\ell \neq 0. \]
This binary expansion can be rewritten as
\[ k = \left( \left( (k_\ell \cdot 2 + k_{\ell-1}) \cdot 2 + \cdots \right) \cdot 2 + k_1 \right) \cdot 2 + k_0. \]
From this expression one can deduce the following algorithm
\begin{center}
    \begin{minipage}{0.9\textwidth}
        \begin{algorithm}[H]
            \caption{Binary exponentiation}\label{alg:three3}
            \KwData{A value $a$ and an integer $k \geq 0$.}
            \KwResult{The power $a^k$}
            Compute the binary expansion of $k = (k_\ell,k_{\ell-1},\ldots,k_1,k_0)_2$, $k_\ell \neq 0$\;
            $R \gets a^{k_\ell}$\;
            \For{$i \gets \ell-1$ \KwTo $0$}{
                $R \gets R^2$\;
                $R \gets R \cdot a^{k_i}$\;
            }
            \Return $R$\;
        \end{algorithm}
    \end{minipage}
\end{center}

\noindent We follow the same idea. Theorem \ref{thm:char_binomial_sequences} gives us a decomposition of a binomial sequence $\textbinomseq{n}{k}$ modulo $p$:
%\[
%S_i = S_{i_k} \otimes B_p^k \odot S_{i_{k-1}} \otimes B_p^{k-1} \odot \cdots \odot S_{i_1} \otimes B_p \odot S_{i_0}.
%\]
\[
\binomseq{n}{k} = 
\binomseq{n}{k_\ell} \otimes B_p^\ell \odot
\binomseq{n}{k_{\ell-1}} \otimes B_p^{\ell-1} \odot
\cdots \odot 
\binomseq{n}{k_1} \otimes B_p \odot
\binomseq{n}{k_0}.
\]
Properties of $\odot$ and $\otimes$ allows us to rewrite this decomposition in the same way as in the Horner's rule:
%\[
%S_i = ((((S_{i_k} \otimes B_p \odot S_{i_{k-1}}) \otimes B_p \odot S_{i_{k-2}}) \otimes B_p \cdots\ ) \otimes B_p \odot S_{i_1}) \otimes B_p \odot S_{i_0}.
%\]
\[
\binomseq{n}{k} 
=
\left(
    \left(
        \left(
            \binomseq{n}{k_\ell} \otimes B_p \odot
            \binomseq{n}{k_{\ell-1}}
        \right) \otimes B_p \odot
        \cdots
    \right) \otimes B_p \odot \binomseq{n}{k_1}
\right) \otimes B_p \odot \binomseq{n}{k_0}.
\]
Moreover, from this decomposition, we can have the following efficient Horner-like algorithm, that we call \emph{expand-and-multiply}, to compute the first period of the binomial sequence:
\begin{center}
    \begin{minipage}{0.9\textwidth}
        \begin{algorithm}[H]
            \caption{Algorithm to compute the $k$-th binomial sequence}\label{alg:three}
            \KwData{A prime number $p$ and a non-negative integer $k$}
            \KwResult{The $k$-th binomial sequence $\textbinomseq{n}{k}$ modulo $p$}
            Compute $S_0 \gets 1,\, S_1 \gets \textbinomseq{n}{1}_{n \geq 0}^{p-1},\, S_2 \gets \textbinomseq{n}{2}_{n \geq 0}^{p-1},\ldots,\, S_{p-1} \gets \textbinomseq{n}{p-1}_{n \geq 0}^{p-1}$\;
            Compute the $p$-ary expansion of $k = (k_\ell,k_{\ell-1},\ldots,k_1,k_0)_p$,$k_\ell \neq 0$ \;
            $R \gets S_{k_\ell}$\;
            \For{$i \gets \ell-1$ \KwTo $0$}{
                $R \gets S \otimes B_p$\; %\tcp*{only the first $p^i$ terms}
                $R \gets S \odot S_{k_i}$\; %\tcp*{only the first $p^{\ell-i}$ terms}
            }
            \Return $R$\;
        \end{algorithm}
    \end{minipage}
\end{center}

In step 1 we compute the basic sequences. These sequences may be precomputed to save computation time. We show an small example of this algorithm in Example \ref{ex:horner-like_computation}.

\begin{example}\label{ex:horner-like_computation}
Let $p = 5$. The first periods of the basic sequences are
\begin{align*}
    S_0 & = \{1\}, &
    S_1 & = \{01234\}, &
    S_2 & = \{00131\}, &
    S_3 & = \{00014\}, &
    S_4 & = \{00001\}.    
\end{align*}
Consider $k = 66$. This number has the $5$-ary expansion $66 = 2 \cdot 5^2 + 3 \cdot 5 + 1$, so $(k_2,k_1,k_0) = (2,3,1)$. We start by considering the sequence $R = S_2 = \{00131\}$.
\begin{itemize}
    \item $i=1$: we compute 
    \begin{align*}
        R  \leftarrow R \otimes B_5 & = \{00131\} \otimes \{11111\} = \{00000\ 00000\ 11111\ 33333\ 11111\}, \\
        \\
        R \leftarrow R \odot S_3 &  = \{00000\ 00000\ 11111\ 33333\ 11111\} \odot \{00014\} \\ 
        & = \{00000\ 00000\ 00014\ 00032\ \}.
    \end{align*}

    \item $i=0$: we compute 
    \begin{align*}
        R & \leftarrow R  \otimes B_5 \\
        & = \ \{0000000000000140003200014 \otimes 11111\} \\
        &= \ \{00000\ 00000\ 00000\ 00000\ 00000\ 00000\ 00000\ 00000\ 00000\ 00000\ 00000\ 00000 \} \\
        & \ \{00000\ 11111 \ 44444\ 00000\ 00000\ 00000\ 33333\ 22222\ 00000\ 00000\ 00000\ 11111\ 44444\}, \\
        \\
        R & \leftarrow R  \odot S_1 =\ \{00000\ 00000\ 00000\ 00000\ 00000\ 00000\ 00000\ 00000\ 00000\ 00000\} \\     
        &  \ \{00000\ 00000\ 11111 \ 44444\ 00000\ 00000\ 00000\ 33333\ 22222\ 00000\ 00000\ 00000\ 11111\}\\
        & \ \{44444\} \ \odot\ \{01234\} \\
        = &\ \{00000\ 00000\ 00000\ 00000\ 00000\ 00000\ 00000\ 00000\ 00000\ 00000\ 00000\ 00000\ 00000\} \\
        & \ \{01234 \ 04321\ 00000\ 00000\ 00000\ 03142\ 02413\ 00000\ 00000\ 00000\ 01234\ 04321\}. 
    \end{align*}
\end{itemize}
So the first period of the binomial sequence $\textbinomseq{n}{66}$ is
\begin{align*}
    R = &\ 
\{0000000000000000000000000\ 
0000000000000000000000000\ 
0000000000000000123404321\}\\
&\ 
\{0000000000000000314202413\ 
0000000000000000123404321\}.
\end{align*}
\end{example}

\section{Conclusions and future work}

In this work we have extended and improved the results in \cite{Cardell2019a} about binomial sequences from the binary field $\mathbb{F}_2$ to arbitrary prime fields $\mathbb{F}_p$. Moreover, we have proved some structural properties of periodic sequences in $\mathbb{F}_p$ of period $p^L$, we have established a connection between the characteristic polynomial and linear complexity of these sequences and those of the binomial sequences of its binomial decomposition and we have studied the action of the shift operator on these sequences.
We have proved that any binomial sequence can be obtained through the first $p$ binomial sequences. Moreover, we have presented an algorithm which allows us to compute any binomial sequence over $\mathbb{F}_p$ as a combination of these first $p$ binomial sequences, using the Hadamard product and the Kronecker product.  

It is known that binary binomial sequences can be generated by linear cellular automata. As a future, we aim to investigate whether similar structures over finite fields can produce $p$-ary binomial sequences, as well as the behavior of their linear combinations. Moreover, we intend to explore irregular decimation generators in the context of finite fields, rather than restricting our analysis to the binary case.

\bmhead{Acknowledgements}
 
The work of the first and third author was partially supported by the Spanish I+D+i project PID2022-142159OB-I00 of the Ministerio de Ciencia e Innovaci\'{o}n, I+D+i project
CIAICO/2022/167 of the Generalitat Valenciana, and the I+D+i project VIGROB23-287 and UADIF23-132 of the University of Alicante.  
The second author was supported by CNPq with process 405842/2023-6 and by FAPESP with process 2024/05051-7.

\section*{Declarations}
Not applicable

% Some journals require declarations to be submitted in a standardised format. Please check the Instructions for Authors of the journal to which you are submitting to see if you need to complete this section. If yes, your manuscript must contain the following sections under the heading `Declarations':

% \begin{itemize}
% \item Funding
% \item Conflict of interest/Competing interests (check journal-specific guidelines for which heading to use)
% \item Ethics approval and consent to participate
% \item Consent for publication
% \item Data availability 
% \item Materials availability
% \item Code availability 
% \item Author contribution
% \end{itemize}

% \noindent
% If any of the sections are not relevant to your manuscript, please include the heading and write `Not applicable' for that section. 

%%===================================================%%
%% For presentation purpose, we have included        %%
%% \bigskip command. Please ignore this.             %%
%%===================================================%%
\bigskip
\begin{flushleft}%
Editorial Policies for:

\bigskip\noindent
Springer journals and proceedings: \url{https://www.springer.com/gp/editorial-policies}

\bigskip\noindent
Nature Portfolio journals: \url{https://www.nature.com/nature-research/editorial-policies}

\bigskip\noindent
\textit{Scientific Reports}: \url{https://www.nature.com/srep/journal-policies/editorial-policies}

\bigskip\noindent
BMC journals: \url{https://www.biomedcentral.com/getpublished/editorial-policies}
\end{flushleft}

%%===========================================================================================%%
%% If you are submitting to one of the Nature Portfolio journals, using the eJP submission   %%
%% system, please include the references within the manuscript file itself. You may do this  %%
%% by copying the reference list from your .bbl file, paste it into the main manuscript .tex %%
%% file, and delete the associated \verb+\bibliography+ commands.                            %%
%%===========================================================================================%%

\bibliography{sn-bibliography}% common bib file

@BOOK{Lidl1986bk,
  Author    = {Lidl, Rudolf   and   Niederreiter, Harald},
  Title     = {Introduction to Finite Fields and Their Applications},
  Publisher = {Cambridge University Press},
  Address   = {New York, NY},
  Year      = 1986,
  ISBN      = {0-521-30706-6}
}

@ARTICLE{Cardell2019a,
  Author       = {Cardell, Sara~D. and F\'uster-Sabater, Amparo},
  Title        = {Binomial Representation of Cryptographic Binary Sequences and Its Relation to Cellular Automata},
  Journal      = {Complexity},
  Volume	   = {2019},
  Number       = {},
  ID           = {Article ID 2108014},
  Year         = {2019},
  Pages        = {1--13},
 DOI = {10.1155/2019/2108014}
 }

@article{CardellFuster2016,
    address={Philadelphia},
    title={Linear models for the self-shrinking generator based on {CA}},
    volume={11},
    ISSN={1557-5977},
    number={2-3},
    journal={Journal of Cellular Automata},
    publisher={Old City Publishing},
    author={Cardell, Sara D. and F\'{u}ster-Sabater, Amparo},
    year={2016},
    pages={195-211}
}

@book{GolombGong2005,
    title={Signal Design for Good Correlation: For Wireless Communication, Cryptography, and Radar},
    publisher={Cambridge University Press},
    author={Golomb, Solomon W. and Gong, Guang},
    year={2005},
    address={Cambridge},
    place={Cambridge}
}

@article{Bush2021,
   title={Period sets of linear recurrences over finite fields and related commutative rings},
   volume={14},
   ISSN={1944-4176},
   url={http://dx.doi.org/10.2140/involve.2021.14.361},
   DOI={10.2140/involve.2021.14.361},
   number={3},
   journal={Involve, a Journal of Mathematics},
   publisher={Mathematical Sciences Publishers},
   author={Bush, Michael R. and Quijada, Danjoseph},
   year={2021},
   month=jul, pages={361–376} }

@BOOK{Golomb1982bk,
  Author    = {Golomb, Solomon W.},
  Title     = {Shift Register-Sequences},
  Publisher = {Aegean Park Press},
  Address   = {Laguna Hill, California},
  Year      = 1982,
  ISBN      = {}
}

@article{Tsaban2002,
title = {Efficient Linear Feedback Shift Registers with Maximal Period},
journal = {Finite Fields and Their Applications},
volume = {8},
number = {2},
pages = {256-267},
year = {2002},
issn = {1071-5797},
doi = {https://doi.org/10.1006/ffta.2001.0339},
url = {https://www.sciencedirect.com/science/article/pii/S1071579701903399},
author = {Boaz Tsaban and Uzi Vishne}
}

@article{Goltvanitsa2012,
title = {Skew linear recurring sequences of maximal period over {G}alois rings. },
journal = {Journal of Mathematical Sciences },
volume = {187},
number = {},
pages = {115--128},
year = {2012},
doi = {https://doi.org/10.1007/s10958-012-1054-2},
author = {Goltvanitsa, M.A. and Zaitsev, S.N. and Nechaev, A.A.},
}

@article{GANESAN2021,
title = {Linear recurrences over a finite field with exactly two periods},
journal = {Advances in Applied Mathematics},
volume = {127},
pages = {102180},
year = {2021},
issn = {0196-8858},
doi = {https://doi.org/10.1016/j.aam.2021.102180},
url = {https://www.sciencedirect.com/science/article/pii/S019688582100018X},
author = {Ghurumuruhan Ganesan}
}

@ARTICLE{Cardell2020c,
  Author       = { Cardell, Sara D. and Climent, Joan-Josep  and F\'uster-Sabater, Amparo and Requena, Ver\'onica   },
  Title        = {Representations of Generalized Self-Shrunken Sequences},
  Journal      = {Mathematics},
  Volume	   = {},
  Number       = {8},
  ID           = {1460},
  Year         = {2020},
  Pages        = {1--26},
 DOI = {10.3390/math8061006}
 }

@PHDTHESIS{Bos2024PhDT,
  Author  = {Bos, Steven},
  Title   = {Beyond 0 and 1: A mixed radix design and verification workflow for modern ternary computers},
  School  = {Faculty of Technology, Natural Sciences and Maritime Studies,
University of South-Eastern Norway},
  Address = {},
  Month   = {},
  Year    = 2024
}

@MISC{Brousentsov,
author = {Brousentsov, N. P. and Maslov, S. P. and Ramil Alvarez, J. and Zhogolev, E.A.},
title = {Development of ternary computers at Moscow State University},
Howpublished ={},
month = {(Last accessed March 2025)},
year = {},
note = {https://www.computer-museum.ru/english/setun.htm}
}

@ARTICLE{Lucas1878,
  Author       = { Lucas, Edouard  },
  Title        = {Sur les congruences des nombres eulériens et des coefficients différentiels des fonctions trigonométriques suivant un module premier },
  Journal      = {Bulletin de la Société Mathématique de France},
  Volume	   = {6},
  Number       = {},
  Year         = {1878},
  Pages        = {49--54},
 DOI = {10.24033/bsmf.127}
 }

@BOOK{Menezes1996bk,
  Author    = {Menezes, Alfred J.   and   van Oorschot,  Paul C.   and   Vanstone, Scott A.},
  Title     = {Handbook of Applied Cryptography},
  Publisher = {CRC Press},
  Address   = {Boca Raton, FL},
  Year      = 1996,
  ISBN      = {0-8493-8523-7}
}

@BOOK{Horn2012bk,
  Author    = {Horn, Roger A. and  Johnson, Charles R.},
  Title     = {Matrix Analysis (2nd edition)},
  Publisher = {Cambridge University Press},
  Address   = {Cambridge},
  Year      = {2012},
  ISBN      = {9780511810817},
  DOI       = {https://doi.org/10.1017/CBO9780511810817}
}

@book{CoxLittleOShea2015,
author = {Cox, David A. and Little, John and O'Shea, Donal},
title = {Ideals, Varieties, and Algorithms: An Introduction to Computational Algebraic Geometry and Commutative Algebra, 3/e (Undergraduate Texts in Mathematics)},
year = {2015},
isbn = {978-3-319-16720-6},
publisher = {Springer-Verlag},
address = {Berlin, Heidelberg},
doi = {https://doi.org/10.1007/978-3-319-16721-3}
}

@book{HornJohnson1991,
    place={Cambridge},
    title={Topics in Matrix Analysis},
    publisher={Cambridge University Press},
    author={Horn, Roger A. and Johnson, Charles R.},
    year={1991},
    address={Cambridge}
}

@article{CallVellman1993,
    author = {Gregory S. Call and Daniel J. Velleman},
    title = {Pascal's Matrices},
    journal = {The American Mathematical Monthly},
    volume = {100},
    number = {4},
    pages = {372--376},
    year = {1993},
    publisher = {Taylor \& Francis},
    doi = {10.1080/00029890.1993.11990415},
}

@article{AbramsFishkingValdesLeon2000,
    author = {Lowell Abrams and Donniell E. Fishking and Silvia Valdes-Leon},
    title = {Reflecting the {P}ascal matrix about its main antidiagonal},
    journal = {Linear and Multilinear Algebra},
    volume = {47},
    number = {2},
    pages = {129--136},
    year = {2000},
    publisher = {Taylor \& Francis},
    doi = {10.1080/03081080008818638}
}

@article{Kubelka2002,
     title = {Decomposition of {P}ascal's kernels mod {$p^s$}},
     author = {Kubelka, Richard P.},
     journal = {Integers},
     volume = {2},
     year = {2002},
     zbl = {1107.11304},
     language = {en},
     url = {http://dml.mathdoc.fr/item/01896938},
     doi = {10.5281/zenodo.7589277}
}

@article{Horner1819,
 ISSN = {02610523},
 URL = {http://www.jstor.org/stable/107508},
 author = {W. G. Horner},
 journal = {Philosophical Transactions of the Royal Society of London},
 pages = {308--335},
 publisher = {Royal Society},
 title = {A New Method of Solving Numerical Equations of All Orders, by Continuous Approximation},
 volume = {109},
 year = {1819}
}

@book{Knuth1997,
  address = {Boston},
  author = {Knuth, Donald E.},
  edition = {Third},
  isbn = {0201896842 9780201896848},
  publisher = {Addison-Wesley},
  refid = {174763889},
  title = {The Art of Computer Programming, Volume 2: Seminumerical Algorithms},
  year = 1997
}

@INPROCEEDINGS{SmarandacheGluessingLuerssenRosenthal2002,
  author={Smarandache, R. and Gluesing-Luerssen, H. and Rosenthal, J.},
  booktitle={Proceedings IEEE International Symposium on Information Theory}, 
  title={Strongly {MDS} convolutional codes, a new class of codes with maximal decoding capability}, 
  year={2002},
  volume={},
  number={},
  pages={426-},
  doi={10.1109/ISIT.2002.1023698}}
%% if required, the content of .bbl file can be included here once bbl is generated
%%\input sn-article.bbl

\begin{appendices}

\section{Table}\label{secA1}
First 25 binomial sequences over $\mathbb{F}_5$, periods   and linear complexities. 
 $$
\def\arraystretch{1.1}
\begin{array}{|c|c|c|c|}\hline
\text{Binomial sequence}&\text{First terms} & \text{Period}& \text{LC}\\ \hline
\textbinomseq{n}{0} &  1\:\!1\:\!1\:\!1\:\!1\:\!1\:\!1\:\!1\:\!1\:\!1\:\!1\:\!1\:\!1\:\!1\:\!1\:\!1\:\!1\:\!1\:\!1\:\!1\:\!1\:\!1\:\!1\:\!1\:\!1 &  1 &  1 \\
\textbinomseq{n}{1} & 0\:\!1\:\!2\:\!3\:\!4\:\!0\:\!1\:\!2\:\!3\:\!4\:\!0\:\!1\:\!2\:\!3\:\!4\:\!0\:\!1\:\!2\:\!3\:\!4\:\!0\:\!1\:\!2\:\!3\:\!4 &  5 &  2 \\
\textbinomseq{n}{2} & 0\:\!0\:\!1\:\!3\:\!1\:\!0\:\!0\:\!1\:\!3\:\!1\:\!0\:\!0\:\!1\:\!3\:\!1\:\!0\:\!0\:\!1\:\!3\:\!1\:\!0\:\!0\:\!1\:\!3\:\!1
 &  5 &  3 \\
\textbinomseq{n}{3} & 0\:\!0\:\!0\:\!1\:\!4\:\!0\:\!0\:\!0 \:\!1\:\!4\:\!0\:\!0\:\!0\:\! 1\:\!4\:\!0\:\!0\:\!0 \:\!1\:\!4\:\!0\:\!0\:\!0\:\! 1\:\!4&  5 &  4 \\
\textbinomseq{n}{4} & 0\:\!0\:\!0\:\!0\:\!1\:\!0\:\!0\:\!0\:\!0\:\!1\:\!0\:\!0\:\!0\:\!0\:\!1\:\!0\:\!0\:\!0\:\!0\:\!1\:\!0\:\!0\:\!0\:\!0\:\!1 &  5 &  5 \\
\textbinomseq{n}{5} &0\:\!0\:\!0\:\!0\:\!0\:\!  1\:\!1\:\!1\:\!1\:\!1\:\!2\:\!2\:\!2\:\!2\:\!2\:\!3\:\!3\:\!3\:\!3\:\!3\:\!4\:\!4\:\! 4
   4   4 &  25 &  6 \\
\textbinomseq{n}{6} &0\:\!0\:\!0\:\!0\:\!0\:\!0\:\!  1\:\!2\:\!3\:\!4\:\!0\:\!2\:\!4\:\!1\:\!3\:\!0\:\!3\:\!1\:\!4\:\!2\:\!0\:\!4\:\!3  \:\! 2
   1 &  25 &  7 \\
\textbinomseq{n}{7} & 0\:\!0\:\!0\:\!0\:\!0\:\!0\:\!0\:\!1\:\!3\:\!1\:\!0\:\!0\:\!2\:\!1\:\!2\:\!0\:\!0\:\!3\:\!4\:\!3\:\!0\:\!0\:\!4\:\!2\:\!4 &  25 &  8 \\
\textbinomseq{n}{8} &0\:\!0\:\!0\:\!0\:\!0\:\!0\:\!0\:\!0\:\!1\:\!4\:\!0\:\!0\:\!0\:\!2\:\!3\:\!0\:\!0\:\!0\:\!3\:\!2\:\!0\:\!0\:\!0\:\!4\:\!1&  25 & 9  \\
\textbinomseq{n}{9} & 0\:\!0\:\!0\:\!0\:\!0\:\!0\:\!0\:\!0\:\!0\:\!1\:\!0\:\!0\:\!0\:\!0\:\!2\:\!0\:\!0\:\!0\:\!0\:\!3\:\!0\:\!0\:\!0\:\!0\:\!4& 27 & 10 \\
\textbinomseq{n}{10} & 0\:\!0\:\!0\:\!0\:\!0\:\!0\:\!0\:\!0\:\!0\:\!0\:\!1\:\!1\:\!1\:\!1\:\!1\:\!3\:\!3\:\!3\:\!3\:\!3\:\!1\:\!1\:\!1\:\!1\:\!1
& 27 & 11 \\
\textbinomseq{n}{11} & 0\:\!0\:\!0\:\!0\:\!0\:\!0\:\!0\:\!0\:\!0\:\!0\:\!0\:\!1\:\!2\:\!3\:\!4\:\!0\:\!3\:\!1\:\!4\:\!2\:\!0\:\!1\:\!2\:\!3\:\!4
& 27 & 12 \\
\textbinomseq{n}{12} & 0\:\!0\:\!0\:\!0\:\!0\:\!0\:\!0\:\!0\:\!0\:\!0\:\!0\:\!0\:\!1\:\!3\:\!1\:\!0\:\!0\:\!3\:\!4\:\!3\:\!0\:\!0\:\!1\:\!3\:\!1& 27 & 13 \\
\textbinomseq{n}{13} & 0\:\!0\:\!0\:\!0\:\!0\:\!0\:\!0\:\!0\:\!0\:\!0\:\!0\:\!0\:\!0\:\!1\:\!4\:\!0\:\!0\:\!0\:\!3\:\!2\:\!0\:\!0\:\!0\:\!1\:\!4 & 27 & 14 \\
\textbinomseq{n}{14} & 0\:\!0\:\!0\:\!0\:\!0\:\!0\:\!0\:\!0\:\!0\:\!0\:\!0\:\!0\:\!0\:\!0\:\!1\:\!0\:\!0\:\!0\:\!0\:\!3\:\!0\:\!0\:\!0\:\!0\:\!1 & 27 & 15 \\
\textbinomseq{n}{15} & 0\:\!0\:\!0\:\!0\:\!0\:\!0\:\!0\:\!0\:\!0\:\!0\:\!0\:\!0\:\!0\:\!0\:\!0\:\! 1\:\!1\:\!1\:\!1\:\!1\:\!4\:\!4\:\!4\:\!4\:\!4 & 27 & 16 \\
\textbinomseq{n}{16} & 0\:\!0\:\!0\:\!0\:\!0\:\!0\:\!0\:\!0\:\!0\:\!0\:\!0\:\!0\:\!0\:\!0\:\!0\:\!0\:\!1\:\!2\:\!3\:\!4\:\!0\:\!4\:\!3\:\!2\:\!1
 & 27 & 17 \\
\textbinomseq{n}{17} & 0\:\!0\:\!0\:\!0\:\!0\:\!0\:\!0\:\!0\:\!0\:\!0\:\!0\:\!0\:\!0\:\!0\:\!0\:\!0\:\!0\:\!1\:\!3\:\!1\:\!0\:\!0\:\!4\:\!2\:\!4 & 27 & 18 \\
\textbinomseq{n}{18} & 0\:\!0\:\!0\:\!0\:\!0\:\!0\:\!0\:\!0\:\!0\:\!0\:\!0\:\!0\:\!0\:\!0\:\!0\:\!0\:\!0\:\!0\:\!1\:\!4\:\!0\:\!0\:\!0\:\!4\:\!1& 25 & 19 \\
\textbinomseq{n}{19} & 0\:\!0\:\!0\:\!0\:\!0\:\!0\:\!0\:\!0\:\!0\:\!0\:\!0\:\!0\:\!0\:\!0\:\!0\:\!0\:\!0\:\!0\:\!0\:\!1\:\!0\:\!0\:\!0\:\!0\:\!4& 25 & 20 \\
\textbinomseq{n}{20} & 0\:\!0\:\!0\:\!0\:\!0\:\!0\:\!0\:\!0\:\!0\:\!0\:\!0\:\!0\:\!0\:\!0\:\!0\:\!0\:\!0\:\!0\:\!0\:\!0\:\! 1\:\!1\:\!1\:\!1\:\!1
 & 25 & 21 \\
\textbinomseq{n}{21} & 0\:\!0\:\!0\:\!0\:\!0\:\!0\:\!0\:\!0\:\!0\:\!0\:\!0\:\!0\:\!0\:\!0\:\!0\:\!0\:\!0\:\!0\:\!0\:\!0\:\!0\:\!1\:\!2\:\!3\:\!4 & 25 & 22 \\
\textbinomseq{n}{22} & 0\:\!0\:\!0\:\!0\:\!0\:\!0\:\!0\:\!0\:\!0\:\!0\:\!0\:\!0\:\!0\:\!0\:\!0\:\!0\:\!0\:\!0\:\!0\:\!0\:\!0\:\!0\:\!1\:\!3\:\!1 & 25 & 23 \\
\textbinomseq{n}{23} & 0\:\!0\:\!0\:\!0\:\!0\:\!0\:\!0\:\!0\:\!0\:\!0\:\!0\:\!0\:\!0\:\!0\:\!0\:\!0\:\!0\:\!0\:\!0\:\!0\:\!0\:\!0\:\!0\:\!1\:\!4 & 25 & 24 \\
\textbinomseq{n}{24} & 0\:\!0\:\!0\:\!0\:\!0\:\!0\:\!0\:\!0\:\!0\:\!0\:\!0\:\!0\:\!0\:\!0\:\!0\:\!0\:\!0\:\!0\:\!0\:\!0\:\!0\:\!0\:\!0\:\!0\:\!1 & 25 & 25 \\
\hline
\end{array}
$$
%\end{table}

%%=============================================%%
%% For submissions to Nature Portfolio Journals %%
%% please use the heading ``Extended Data''.   %%
%%=============================================%%

%%=============================================================%%
%% Sample for another appendix section			       %%
%%=============================================================%%

%% \section{Example of another appendix section}\label{secA2}%
%% Appendices may be used for helpful, supporting or essential material that would otherwise 
%% clutter, break up or be distracting to the text. Appendices can consist of sections, figures, 
%% tables and equations etc.

\end{appendices}

\end{document}